\tikzstyle{vertex}=[circle, draw, inner sep=0pt, minimum size=6pt]
\newcommand{\vertex}{\node[vertex]}
\tikzstyle{vecArrow} = [thick, decoration={markings,mark=at position
\tikzstyle{doubleLine} = [thick, 
\tikzstyle{innerWhite} = [semithick, white,line width=1.4pt, shorten >= 4.5pt]
	\tikzstyle{RedArrow} = [thick, decoration={markings,mark=at position
\newcommand{\minheight}{1.1cm}
\newcommand{\boxpt}{0.06cm}
\newcommand{\boxmid}{10.5cm}
\newcommand{\boxdist}{2.95cm}
\newcommand{\boxwidth}{2.4cm}
\newcommand{\HSSwidth}{1.6cm}
\newcommand{\boxbetween}{\boxdist-\boxwidth}
\newcommand{\withbig}{\boxdist+\boxwidth/2+ 2*\boxpt + \HSSwidth + \boxbetween}
\newcommand{\yih}{\y/2 +\yi/2}
\newcommand{\ybetween}{0.25cm}
\newcommand{\y}{15cm}
\pgfmathsetmacro{\yi}{\y - 2*\ybetween-\minheight}
\newcommand{\colfill}{white}
\newcommand{\colline}{black}
\definecolor{mid-gray}{gray}{0.6}
\definecolor{light-gray}{gray}{0.8}
\newcommand{\D}{\Deg_{\max}}
\newcommand{\IR}{{\mathbb{R}}}
\newcommand{\IN}{{\mathbb{N}}}
\newcommand{\eChar}{\begin{enumerate}[(i)]}
\newcommand{\eCharR}{\begin{enumerate}[(a)]}
\newcommand{\eBr}{\begin{enumerate}[(1)]}
\newcommand{\Deg}{\operatorname{Deg}}
\newcommand{\vol}{\operatorname{vol}}
\newcommand{\diam}{\operatorname{diam}}
\newcommand{\iP}{\mathcal P}
\newcommand{\Abstract}
\newtheorem*{rep@theorem}{\rep@title}
\newcommand{\newreptheorem}[2]{%
	\newenvironment{rep#1}[1]{%
		\def\rep@title{#2 \ref{##1}}%
		\begin{rep@theorem}}%
		{\end{rep@theorem}}}
\title
{
Rigidity properties of the hypercube via Bakry-\'Emery curvature
}
\author{Shiping Liu, Florentin M\"unch, Norbert Peyerimhoff}
\date{\today}
\theoremstyle{plain}
\newtheorem{lemma}{Lemma}[section]
\newtheorem{theorem}[lemma]{Theorem}
\newtheorem{proposition}[lemma]{Proposition}
\newtheorem{corollary}[lemma]{Corollary}
\theoremstyle{definition}
\newtheorem{example}[lemma]{Example}
\newtheorem{rem}[lemma]{Remark}
\newtheorem{defn}[lemma]{Definition}
\numberwithin{equation}{section}
\numberwithin{subsection}{section}
\numberwithin{theorem}{section}
\numberwithin{lemma}{section}
\numberwithin{proposition}{section}
\numberwithin{example}{section}
\numberwithin{no}{section}
\numberwithin{rem}{section}
\numberwithin{defn}{section}
\numberwithin{corollary}{section}
\begin{document}

\maketitle

%\tableofcontents

%\section*{Preface}

\pagestyle{plain}

\begin{abstract}
  We give rigidity results for the discrete Bonnet-Myers diameter bound
  and the Lichnerowicz eigenvalue estimate. Both inequalities are sharp if
  and only if the underlying graph is a hypercube.  The proofs use
  well-known semigroup methods as well as new direct methods which
  translate curvature to combinatorial properties.  Our results can be
  seen as first known discrete analogues of Cheng's
  and Obata's rigidity theorems.
\end{abstract}

\tableofcontents

%\begin{comment}

\section{Introduction}

The hypercube is a well studied object and a variety of combinatorial characterizations have been established. For a survey article on combinatorial properties of the hypercube, see \cite{harary1988survey}. 
We want to point out two particular hypercube characterizations in the literature.
One goes back to Foldes. % (see \cite{foldes1977characterization}).
\begin{theorem}[see \cite{foldes1977characterization}]
An unweighted graph $G$ is a hypercube if and only if
\begin{itemize}
\item
$G$ is bipartite and
\item For all vertices $x,y$, the number of shortest paths between $x$ and $y$ is $d(x,y)!$.
\end{itemize}
\end{theorem}

The other hypercube characterization has been found by Laborde and Hebbare.
\begin{theorem}[see \cite{laborde1982another}]
An unweighted graph $G$ is a hypercube if and only if
\begin{itemize}
\item
$\#V = 2^{\Deg_{\min}}$ and
\item Every pair of adjacent edges is contained in a 4-cycle.
\end{itemize}
\end{theorem}

Another question one might ask is whether the hypercube is already uniquely determined by its local structure. In particular, one might conjecture that every bipartite, regular graph with  all two-balls isomorphic to the hypercube two-ball, needs to be the hypercube. However, this has been disproven by Labborde and Hebbare by the example given in Figure~\ref{fig:NoChar} (see \cite{laborde1982another}).

	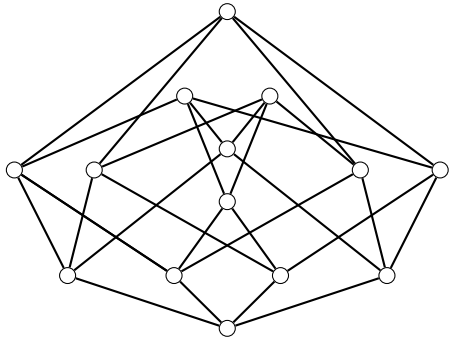
\begin{figure}[h]
	%\centering
	\begin{tikzpicture}[scale=0.7]
	\vertex(x) at (0,0) {};
	\vertex(y1) at (-3,1) {};
	\vertex(y2) at (-1,1) {};
	\vertex(y3) at (1,1) {};
         \vertex(y4) at (3,1) {};	
	 \vertex(z1) at (-4,3) {};	         
	 \vertex(z2) at (-2.5,3) {};
	 \vertex(z3) at (0,2.4) {};
	 \vertex(z4) at (0,3.4) {};
	 \vertex(z5) at (2.5,3) {};
	 \vertex(z6) at (4,3) {};	 	 	 	          
	 \vertex(w1) at (0,6) {};	 	 	 	          
	 \vertex(w2) at (-0.8,4.4) {};	 	 	 	          
	 \vertex(w3) at (0.8,4.4) {};	 	 	 	          	 	 	 
	\Edge(x)(y1)	
	\Edge(x)(y2)
	\Edge(x)(y3)
	\Edge(x)(y4)
	\Edge(y1)(z1)
	\Edge(y1)(z2)
	\Edge(y1)(z4)
	\Edge(y2)(z1)				
	\Edge(y2)(z3)
	\Edge(y2)(z1)
	\Edge(y2)(z5)
	\Edge(y3)(z2)
	\Edge(y3)(z3)
	\Edge(y3)(z6)					
	\Edge(y4)(z4)
	\Edge(y4)(z5)
	\Edge(y4)(z6)	
	\Edge(z1)(w1)
	\Edge(z1)(w2)
	\Edge(z2)(w1)
	\Edge(z2)(w3)				
	\Edge(z3)(w2)
	\Edge(z3)(w3)
	\Edge(z4)(w2)
	\Edge(z4)(w3)
	\Edge(z5)(w1)
	\Edge(z5)(w3)					
	\Edge(z6)(w1)
	\Edge(z6)(w2)

	\end{tikzpicture}
	\caption{The illustrated graph is bipartite, 4-regular and locally isomorphic to the hypercube in the sense of two-balls.}
\label{fig:NoChar}

\end{figure}
%recently attracted interest
The hypercube characterization we present in this paper is completely different in spirit. Our approach is inspired by Riemannian geometry.
On Riemannian manifolds, Ricci curvature is a highly fruitful concept to deduce many interesting analytic and geometric properties like Li-Yau inequality, parabolic Harnack inequality and eigenvalue estimates like Buser inequality.
Assuming a positive lower Ricci-curvature bound yields eminently strong implications. One of them is Myers' diameter bound stating that a complete, connected $n$-dimensional manifold with Ricci-curvature at least a positive constant $K>0$ has a diameter smaller or equal than the $n$-dimensional sphere with Ricci-curvature $K$ (see \cite{myers1941riemannian}). 
The other implication we are interested in this article is the Lichnerowicz eigenvalue bound. It states that if the Ricci-curvature is larger than a positive constant $K>0$, then
one can lower bound the first non-zero eigenvalue of the Laplace-Beltrami operator by $\frac{nK}{n-1}$.
Impressive rigidity results have been found by Cheng (\cite{cheng1975eigenvalue}) and Obata (\cite{obata1962certain}), respectively. 
They have proven that  rigidity of the diameter bound as well as rigidity of the Lichnerowicz eigenvalue estimate can only be attained on the round sphere.

A remarkable analogy between the round sphere $S_N$ and the hypercube $H_N$ is that in both cases, the concentration of measure converges to the Gaussian measure when taking the dimension to infinity.
By concentration of measure we mean a measure $C$ on $[0,\infty)$ given by $C_{S_N}(A) := \vol(x \in S_N : d(x,x_0) \in A)$ for a fixed $x_0 \in S_N$ and $C_{H_N}(A) := \vol(x \in H_N : d(x,x_0) \in A)$ for a fixed $x_0 \in H_N$ when taking the natural
volume measure $\vol$ and distance $d$  on $S_N$ and $H_N$.
Taking a suited normalization yields convergence in distribution of $C_{S_N}$ and $C_{H_N}$ to the Gaussian measure $C_G$ with density $C_G(dx) = e^{-x^2}$.
For details, see e.g. \cite{gromov2007metric,ollivier2010survey}.
This analogy between the round sphere and the hypercube motivates the question whether rigidity properties similar to Cheng's and Obata's sphere theorems hold true for the hypercube. In this paper, we positively answer this question.

%The results presented in this paper suggest strongly that the discrete analogue of the $n$-dimensional round sphere is the $n$-dimensional hypercube.

While theory of Riemannian manifolds is understood very well, the era of computer science demands for discrete objects instead of continuous manifolds. Graphs were introduced as a discrete setting to approximate the behavior of manifolds.
This was the birth of discrete differential geometry. 
According to classical differential geometry, there are various approaches to study curvature and Ricci-curvature in particular. We mention the coarse Ricci-curvature by Ollivier using Wasserstein-metrics \cite{ollivier2009ricci}, the Ricci-curvature via convexity of the entropy by Sturm \cite{sturm2006i,sturm2006ii}, Lott, Villani \cite{lott2009ricci}, and the Bakry-\'Emery-Ricci-curvature \cite{bakry1985diffusions}.
    When explaining curvature of manifolds, the canonical examples are the sphere for positive, the Euclidean plane for zero, and the hyperbolic space for negative curvature.
Related examples can also be given on graphs. These are hypercubes for positive, lattices for zero and trees for negative curvature. In a certain sense, the meaningfulness of a discrete curvature notion can be measured via these examples.
Indeed, the question of the Ricci-curvature of the hypercube has recently attracted interest among several mathematical communities (see 
\cite{erbar2012ricci,gozlan2014displacement,gromov2007metric,klartag2015discrete,ollivier2012curved,villani2016synthetic}) and was asked verbatim by Stroock in a seminar as early as 1998, in a context of logarithmic Sobolev inequalities.
In this article, the hypercube plays one of the leading roles.

The other leading role is played by Bakry's and \'Emery's Ricci-curvature.
Due to Bakry and \'Emery's break through in 1985, a Ricci-curvature notion also became available for discrete settings. Naturally, the question arises whether the strong implications of Ricci-curvature bounds also hold true for graphs.
This is a vibrant topic of recent research and many  results in analogy to manifolds have been established.
     
     We want to particularly point out the discrete version of Myer's diameter bound (see \cite{liu2016bakry} and weaker versions in \cite{fathi2015curvature,horn2014volume}) and Lichnerowicz eigenvalue bound (see e.g. \cite{liu2015curvature,bauer2014curvature}).

\begin{proposition}\label{pro: Intro Myers Lichnerowicz}
Let $G=(V,E)$ be a
simple (i.e., without loops and multiple edges)
connected graph. Let $D$ be the maximal vertex degree.  
Let $\diam(G)$ be the diameter of $G$ w.r.t the combinatorial graph distance.
Let
$0=\lambda_0<\lambda_1 \leq \lambda_2 \leq\ldots$ be the eigenvalues
of the non-normalized Laplacian $-\Delta$, defined in
\eqref{eq:lapunweigh} below. 
	Suppose $G$ satisfies the Bakry-\'Emery curvature-dimension inequality $CD(K,\infty)$. Then,
	\begin{enumerate}
		\item $G$ satisfies Myer's diameter bound, i.e.,
		\begin{align}
		\diam(G)\leq \frac{2D}K.
		\end{align}
		\item $G$ satisfies Lichnerowicz eigenvalue estimate, i.e.,
		\begin{align}
		\lambda_1 \geq K.
		\end{align}
	\end{enumerate}
\end{proposition}

%Cheng proved on manifolds that sharpness of Myers' diameter bound is
%only attained on Euclidean spheres (see \cite{Cheng1975}). Obata
%proved another characterization of the Euclidean sphere via sharpness
%of the Lichnerowicz eigenvalue estimate \textcolor{red}{(see
 % \cite{Obata1962})}.
%\textcolor{red}{References?}

The first assertion follows from \cite[Corollary~2.2]{liu2016bakry}.
The second assertion is the Lichnerowicz spectral gap theorem which can be found in \cite{bauer2014curvature,liu2015curvature} in the graph case.

It is now natural to ask whether analogues of Cheng's and Obata's theorems are still valid on graphs.
This article is dedicated to positively answer this question and to prove that indeed a discrete version of these
rigidity results holds true.  A characterization will be given via the
hypercube which shall be seen as a discrete analogue of the Euclidean sphere.  

For convenience, we first state our main results for unweighted graphs.
%\textcolor{blue}{ This characterization only holds true
%  for unweighted graphs as section~\ref{sec:Examples} demonstrates.}

%\textcolor{blue}{
%For convenience of the reader, we restrict to unweighted graphs at this point.
%}

\begin{theorem} \label{cor:main unweighted} Let $G=(V,E)$ be a
    simple (i.e., without loops and multiple edges)
	connected graph. Let $D$ be the maximal vertex degree.  Let
	$0=\lambda_0<\lambda_1 \leq \lambda_2 \leq\ldots$ be the eigenvalues
	of the non-normalized Laplacian $-\Delta$, defined in
	\eqref{eq:lapunweigh} below. The following are
		equivalent:
	\begin{enumerate}
		\item $G$ is a $D$-dimensional hypercube.  \label{char:hypercube cor}

		\item  $G$ satisfies $CD(K,\infty)$ for some $K>0$ and
			$\lambda_D = {K}$. \label{char:Lichnerowicz cor}
		\item $G$ satisfies $CD(K,\infty)$ for some $K>0$ and
			$\diam_d(G) = \frac{2 D}{K}$. \label{char:diameter bound cor}
		\end{enumerate}
\end{theorem}
	The theorem is a direct consequence of the main theorem (Theorem~\ref{thm:main}) which is concerned with weighted graphs.
\begin{rem}

Theorem~\ref{cor:main unweighted} is connected to the eigenvalue- and diameter bounds from Proposition~\ref{pro: Intro Myers Lichnerowicz} in the following way:
	\begin{itemize} 
		\item 	Statement \ref{char:Lichnerowicz cor} means sharpness of the eigenvalue
		bound $\lambda_D \geq \lambda_1 \geq {K}$ whenever $CD(K,\infty)$ is
		satisfied, see \cite[Theorem~1.6]{liu2015curvature}.
		It is crucial to assume $\lambda_D=K$ and not only $\lambda_1=K$ since the latter is not strong enough to imply that $G$ is the hypercube (see Example~\ref{ex: sharp eigenvalue estimate}).
		However, the  hypercube characterization via $\lambda_D=K$ also holds for weighted graphs without further assumptions.
		
		\item 	Statement \ref{char:diameter bound cor} means sharpness of the diameter
		bound $\diam_d(G) \leq \frac{2 D}{K}$ whenever $CD(K,\infty)$ is
		satisfied (see \cite[Corollary~2.2]{liu2016bakry}).
        To give a hypercube characterization for weighted graphs, we will need to have a further assumption on the uniformity of the edge weight and vertex measure         (see Definition~\ref{def:uniform edge degree}, Section~\ref{sec: const edge deg} and Section~\ref{sec:Examples}).
	\end{itemize}
	%\textcolor{blue}{The corollary is a direct consequence of the main theorem (Theorem~\ref{thm:main}) which treats weighted graphs.
 But before we present our proof strategies and the main theorem for weighted graphs, we explain the organization of the paper and introduce our setup and notations.
\end{rem}

\subsection{Organization of the paper}
In Section~\ref{sec:Concepts}, we introduce our main concepts for exploring sharpness of the $CD$-inequality. In particular in Section~\ref{sec: Hypercube char}, we present our main theorem (Theorem~\ref{thm:main}), i.e., the characterization of the hypercube via curvature sharpness for weighted graphs. 
We give a short proof of our main theorem in this subsection
under assumption of the concepts given until there. All further sections are dedicated to prove the main concepts from Section~\ref{sec:Concepts}.

\subsection{General setup and  notation}
 
Let us start with a rather general definition of a
  graph. A triple $G=(V,w,m)$ is called a
  \emph{(weighted) graph} if $V$ is a countable set, if
$w:V^2=V\times V \to [0,\infty)$ is symmetric and zero on the diagonal and if
$m:V \to (0,\infty)$. We call $V$ the \emph{vertex set}, and $w$ the
\emph{edge weight} and $m$ the \emph{vertex measure}. 
For $x,y\in V$, we write $x\sim y$ whenever $w(x,y)>0$.
We define the
\emph{graph Laplacian} $\IR^V \to \IR^V$ via
\begin{align}
	\Delta f(x) := \frac 1 {m(x)} \sum_y w(x,y)(f(y) - f(x)).
\end{align}  In the
following, we only consider \emph{locally finite} graphs, i.e., for
every $x \in V$ there are only finitely many $y \in V$ with
$w(x,y) >0$.  We write 
\begin{align}\label{eq:defDeg}
\Deg(x) := \frac{\sum_y w(x,y)}{m(x)}
\end{align}
 and
$\D := \sup_x \Deg(x)$.  
Furthermore, we define the \emph{combinatorial vertex degree} $\deg(x):=\#\{y:y\sim x\}$ and $\deg_{\max}:=\sup_x \deg(x)$.
In this article, we will always assume $\D < \infty$ and $\deg(x) < \infty$ for all $x \in V$.
Moreover for
  $A,B \subset V$, we write $\vol(A):=m(A):=\sum_{x\in A} m(x)$
  and $$w(A,B):=\sum_{(x,y)\in A \times B}w(x,y).$$

For some of our rigidity results, we restrict our considerations to unweighted graphs.  
\begin{defn}[Unweighted representation of a graph]\label{def: unweighted Repr}
For a graph $G=(V,w,m)$, we define the set of unoriented edge set $E:=\{\{x,y\}:w(x,y)>0\}$.
We call 
$\widetilde G:=(V,E)$ the \emph{unweighted representation} of $G$.  We call $G=(V,E)$ to be an
\emph{unweighted graph} and we define the \emph{non-normalized Laplacian} as 
\begin{equation} \label{eq:lapunweigh}
\Delta f(x) = \sum_{y\sim x} (f(y)-f(x)).
\end{equation}
If furthermore $w(x,y) \in \{0,1\}$ and
$m(x) = 1$ for all $x,y \in V$, we identify $G$ with $\widetilde G$ since the Laplacians of $G$ and $\widetilde G$ coincide.
Moreover, an unweighted graph $G=(V,E)$ is simple, i.e., it has no
multiple edges by the very construction and $G$ is without loops since we
have $w(x,x)=0$ for all $x \in V$.
\end{defn}

For rigidity results on the diameter, we need uniformity of the edge degree which we define now.

\begin{defn}[Edge degree]\label{def:uniform edge degree}
	Let $G=(V,w,m)$ be a weighted graph. Let $E^{or}:=\{(x,y):x\sim y\}$ be the set of oriented edges, i.e., we distinguish an edge $(x,y)$  from $(y,x)$.
	Additionally to the vertex degrees $\deg$ and $\Deg$, we define the \emph{ edge degree } $\kappa:E^{or} \to \IR_+$ via
	$\kappa(x,y):=w(x,y)/m(x)$.
		We say that $G$ has \emph{ constant edge degree} $\kappa_0$ if $\kappa(x,y) \in \{0,\kappa_0\}$ for all $x,y\in V$.
	%	We write $\kappa=const$ if there exists $\kappa_0$ s.t.
\end{defn}

We remark that the notation $\kappa$ corresponds to a standard notation of Markov kernels, but in our setting, we do not need any normalization property of $\kappa$.

Let us give a
definition of the hypercube which is particularly useful for our
purposes.
\begin{defn}[Hypercube]\label{def:hypercube}
	Let $D \in \IN$ and let $[D]:=\{1,\ldots,D\}$. We denote the
	power set by $\iP$. For $A,B \in \iP([D])$, we
	denote the symmetric difference by
	$A \ominus B := (A \cup B) \setminus (A \cap B)$.  We define
	$E_D := \{\{A,B\} \in \iP([D])\times \iP([D]) : \#(A \ominus B) =1
	\}$. % We now say 
	Then the unweighted graph $H_D=(\iP([D]),E_D)$ is a realisation of a
		\emph{$D$-dimensional hypercube}.
    We say a weighted graph $G=(V,w,m)$ is a $D$-dimensional hypercube if its unweighted representation $\widetilde G$ is a $D$-dimensional hypercube.
\end{defn}

\begin{rem}
	This definition is equivalent to another standard definition of the hypercube, i.e.,
	$H_D  \cong  (\{0,1\}^D,E)$ s.t. $v\sim w$ iff $\|v-w\|_1=1$ for all $v,w \in \{0,1\}^D$.
\end{rem}

\begin{defn} [Bakry-\'Emery-curvature] \label{def:BE-curv} The
  \emph{Bakry-\'Emery-operators} for functions
  $f,g: V \to \IR$ are defined via
  $$
  2\Gamma(f,g) := \Delta (fg) - f\Delta g - g\Delta f
  $$
  and
  $$
  2\Gamma_2(f,g) := \Delta \Gamma(f,g) - \Gamma(f, \Delta g) -
  \Gamma(g,\Delta f).
  $$
  We write $\Gamma(f):= \Gamma(f,f)$ and $\Gamma_2(f):=\Gamma_2(f,f)$.
	
  A graph $G$ is said to satisfy the \emph{curvature dimension
    inequality} $CD(K,n)$ for some $K\in \IR$ and $n\in (0,\infty]$
    at a vertex $x \in V$ if for all $f: V \to \IR$,
  $$
  \Gamma_2(f)(x) \geq \frac 1 n (\Delta f)^2(x) + K \Gamma f(x).
  $$
  $G$ satisfies $CD(K,n)$ (globally), if it satisfies $CD(K,n)$ at
  all vertices.
\end{defn}
We remark $ 2\Gamma(f,g)(x) = \frac{1}{m(x)} \sum_{y \sim x} w(x,y)(f(y)-f(x))(g(y)-g(x))$  for $f,g:V\to \IR$ and $x \in V$.
Therefore, $\Gamma(f) \ge 0$. 
Now we define the combinatorial 
%and resistance 
metric and diameter.

\begin{defn}[Combinatorial metric]\label{def:combinatorial metric}
  Let $G=(V,w,m)$ be a locally finite graph.  We define the
  \emph{combinatorial metric} $d:V^2 \to [0,\infty)$ via
  $$
  d(x,y) := \min \{n : \mbox{ there exist } x=x_0,\ldots,x_n=y \mbox{
    s.t. } w(x_i,x_{i-1})>0, \mbox{ all } i=1\ldots n\}.
  $$
  and the \emph{combinatorial diameter} via
  $\diam_d(G) := \sup_{x,y \in V} d(x,y)$.
\end{defn}

We define the backwards-degree w.r.t. $x_0\in V$ via
$$d_-^{x_0}(z):= \sum_{\stackrel{y\sim
      z}{d(y,x_0)<d(z,x_0)}} \frac{w(y,z)}{m(z)}$$
%\# \{y \sim z: d(y,x)<d(z,x)\}$
and the forwards-degree $$d_+^{x_0}(z):=
  \sum_{\stackrel{y\sim z}{d(y,x_0)>d(z,x_0)}}
  \frac{w(y,z)}{m(z)}.$$ For $A \subset V$, we define $d_A(x) :=
w(\{x\},A)/m(x).$ %\#\{a \in A: a \sim x\}$. 
The sphere and ball of radius $k$ around $x \in V$ are
  defined as $S_k(x):=\{y \in V: d(x,y)=k\}$ and
  $B_k(x):=\{y \in V: d(x,y)\leq k\}$.

\section{Concepts and main results for weighted graphs}\label{sec:Concepts}

In this section, we start considering abstract criteria for sharpness of the $CD$ inequality.
The criteria will be applied to the distance functions which will motivate the notion of a \emph{hypercube shell structure}.
For characterization of diameter sharpness, we moreover need a constant edge degree which essentially means standard weights.
Additionally to the abstract criteria of $CD$ sharpness, we need a combinatorial approach via the \emph{small sphere property} and the \emph{non-clustering property} (see Definition~\ref{def: SSP NCP}) to characterize the hypercube.

\subsection{Abstract curvature sharpness properties}

In our investigations of sharpness of the $CD$ inequality, we start with a basic observation. Suppose a graph $G=(V,w,m)$ satisfies $CD(K,\infty)$, then for all $f \in C(V)$, one has
\begin{enumerate}
	\item $e^{-2Kt} P_t \Gamma f \geq \Gamma P_t f$.
	\item $\Gamma_2 f \geq  K \Gamma f$.
	\item $\lambda_1 \geq K$. 
\end{enumerate} 

%\textcolor{red}{References?}

The first assertion in the manifold case can be found e.g. in \cite[Proposition~3.3]{bakry2004functional}, in \cite[Lemma~5.1]{ledoux2004spectral},   and in \cite[Theorem~1.1]{wang2011equivalent}. For graphs, it can be found e.g. in \cite[Lemma~2.11]{liu2014eigenvalue} and \cite[Theorem~3.1]{lin2015equivalent}.
The second assertion is the definition of $CD(K,\infty)$.
The third assertion is the Lichnerowicz spectral gap theorem which can be found for graphs in \cite{bauer2014curvature}  
and for the more general graph connection Laplacians in \cite{liu2015curvature}.
    Indeed, sharpness of one of the inequalities above implies sharpness of all other ones in a very precise way, as stated in the following theorem which will reappear as Theorem~\ref{thm:abstract CD sharpness} and be proven there.

\begin{figure}[h]
	%	\centering	
	\resizebox{15cm}{!}
	{
		\begin{tikzpicture}[thick]
		
		\tikzstyle{doubleRedArrow} = [thick, decoration={markings,mark=at position
			5.5pt with {\arrow[semithick, color=\colfill]{triangle 60 reversed}}},
		decoration={markings,mark=at position
			5.5pt with {\arrow[semithick, color=\colline]{open triangle 60 reversed}}},
		decoration={markings,mark=at position
			0.99999 with {\arrow[semithick, color = \colfill]{triangle 60}}},
		decoration={markings,mark=at position
			1 with {\arrow[semithick, color = \colline]{open triangle 60}}},
		double distance=1.4pt,
		shorten <= 5.5pt,
		shorten >= 5.5pt,
		preaction = {decorate},
		postaction = {draw,line width=1.4pt, \colfill,shorten >= 4.5pt, shorten <= 4.5pt},
		color=\colline
		]

		\node[draw,rectangle, color=\colline,align=center,text width = 2cm, minimum height = \minheight, minimum width = 2.4cm] (Pt) at (\boxmid-\boxdist,\y){$\Gamma P_t f = e^{-2Kt} P_t \Gamma f$};
		\node[draw, color=\colline,rectangle,align=center,text width = 2cm, minimum height = \minheight, minimum width = 2.4cm] (phi) at (\boxmid,\y){$f = \varphi + C$, $ -\Delta \varphi = K\varphi$};
		\node[draw, color=\colline,rectangle,align=center,text width = 2cm, minimum height = \minheight, minimum width = 2.4cm] (G2) at (\boxmid+\boxdist,\y){$\Gamma_2f= K\Gamma f$};
		
		\renewcommand{\colline}{light-gray}
		
		\node[draw, color=\colline,rectangle,align=center,text width = 6.0cm, minimum height = \minheight, minimum width = 2*\boxdist + \boxwidth] (dist) at (\boxmid,0cm+\yi){$f:=d(x_0,\cdot), \quad \Deg(x_0)=D$};
		\node[draw, color=\colline,rectangle,align=center,text width = 6.0cm, minimum height = \y-\yi + \minheight + 2*\boxpt, minimum width = 2*\boxdist+\boxwidth+ 4*\boxpt] (distbigbox) at (\boxmid,\y/2 +\yi/2 - \boxpt){};
		\node[draw, color=\colline,rectangle,align=center, text width = 1.2cm, minimum height = \y-(\yi) + \minheight + 2*\boxpt, minimum width = \HSSwidth] (HSS) at (\boxmid + \boxdist+\boxwidth/2+ 2*\boxpt + \HSSwidth/2 + \boxbetween,\y/2 +\yi/2 - \boxpt){$HSS$};
		
		%\node[draw,rectangle,align=center, text width = 1.2cm, minimum height = \y- \yi + \minheight + 2*\boxpt, minimum width = \HSSwidth] (dist) at (\boxmid - \boxdist -\boxwidth/2- 2*\boxpt - \HSSwidth/2  -\boxdist +\boxwidth ,\y/2 + \yi/2 - \boxpt){${\lambda_{\deg_{\max}}}$ $= K$};
		
		\node[draw, color=\colline,rectangle,align=center, text width = 1.2cm, minimum height = \y- \yi + \minheight + 2*\boxpt, minimum width = \HSSwidth] (diam) at (\boxmid - \boxdist -\boxwidth/2- 2*\boxpt - \HSSwidth/2  -\boxdist +\boxwidth ,\yih - \boxpt){$d(x_0,y)$ $= \frac{2D}K$};
		
		%\node[draw,rectangle,align=center, text width = 1.2cm, minimum height = \y- \yi + \minheight + 2*\boxpt, minimum width = \HSSwidth] (diam) at (\boxmid - \boxdist -\boxwidth/2- 2*\boxpt - \HSSwidth/2  -\boxdist +\boxwidth ,\yih - \boxpt){$\|d(x_0,\cdot)\|$ $= \frac{2D}K$};

		%\node[draw,rectangle,align=center,text width = 2.0cm, minimum height = \minheight, minimum width = 18cm] (Pt) at (\boxmid,\y + \minheight){$CD(K,\infty)$};

		\node[draw, color=\colline,rectangle,align=center,text width = 2.0cm, minimum height = \minheight, minimum width = 2*(\withbig)] (kappa) at (\boxmid,\y - 2*\minheight - 2*\boxpt-4*\ybetween){$\kappa=const.$};

		\node[draw, color=\colline,rectangle,align=center,text width = 2.0cm, minimum height = 3*\minheight + 4*\ybetween+4*\boxpt, minimum width = 2*(\withbig)+4*\boxpt] (kappabigbox) at (\boxmid,\y - \minheight - 2*\ybetween -2*\boxpt){};
		
		\node[draw, color=\colline,rectangle,align=center,text width = 1.2cm, minimum height = 3*\minheight + 4*\ybetween+4*\boxpt, minimum width = \HSSwidth] (lambda) at (\boxmid - 1.5*\boxwidth - 1.5*\HSSwidth - 3*\boxdist + 3*\boxwidth - 4*\boxpt,\y - \minheight - 2*\ybetween -2*\boxpt){${\lambda_{\deg_{\max}}}$ $= K$};
		
		\renewcommand{\colline}{black}		
		
		\node[draw, color=\colline,rectangle,align=center,text width = 5.0cm, minimum height = \minheight, minimum width = \HSSwidth + \boxdist-\boxwidth + 2*(\withbig)+4*\boxpt ] (CD) at (\boxmid- \boxwidth/2 - \boxdist/2 + \boxwidth/2 + 0.5*\boxwidth - 0.5*\HSSwidth,\y + \minheight){$CD(K,\infty)$ and $x_0 \in V$};
		
		\renewcommand{\colline}{light-gray}	
		
		\node[draw, color=\colline,rectangle,align=center,text width = 2.0cm, minimum height = 4*\minheight + 4*\ybetween + 8*\boxpt, minimum width = \HSSwidth + \boxdist-\boxwidth + 2*(\withbig)+8*\boxpt ] (CDbigbox) at (\boxmid- \boxwidth/2 - \boxdist/2 + \boxwidth/2 + 0.5*\boxwidth - 0.5*\HSSwidth,\y - \minheight/2 - 2*\ybetween - 2*\boxpt){};

		\node[draw, color=\colline,rectangle,align=center,text width = 2.0cm, minimum height = 4*\minheight + 4*\ybetween + 8*\boxpt, minimum width = \boxwidth ] (HC) at (\boxmid + 2*\boxwidth +3*\boxdist -3*\boxwidth + \HSSwidth + 6*\boxpt  ,\y - \minheight/2 - 2*\ybetween - 2*\boxpt){$\frac{2D}{K}$-dim. Hypercube with $\kappa = \frac K 2$};

		\renewcommand{\colline}{black}			
		
		\draw[doubleRedArrow] (Pt.east)  to  node[above] {}  (phi.west); 
		\draw[doubleRedArrow] (G2.west)  to  node[above] {}  (phi.east);

		\renewcommand{\colline}{light-gray}

		\draw[doubleRedArrow] (HC.west)  to node[above] {}  (CDbigbox.east) ; 
		\draw[doubleRedArrow] (lambda.east)  to  node[above] {}  (kappabigbox.west); 
		\draw[doubleRedArrow] (diam.east)  to node[above] {}  (distbigbox.west); 
		\draw[doubleRedArrow] (HSS.west)  to  node[above] {}  (distbigbox.east);

		\end{tikzpicture}
	}
	\caption{A scheme of Theorem~\ref{thm:abstract CD sharpness INTRO}
	}
	\label{fig:Abstract}
\end{figure}
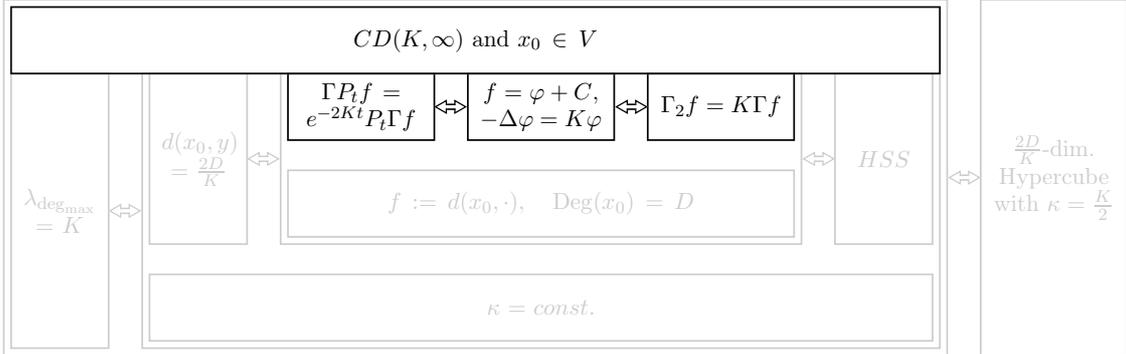

\begin{theorem}[Abstract $CD$-sharpness properties]\label{thm:abstract CD sharpness INTRO}
	Let $G=(V,w,m)$ be a connected graph
	with $\D < \infty$ and 
	satisfying $CD(K,\infty)$.
	%Suppose $\varphi$ to be an eigenfunction to the eigenvalue $K$.
	Let $f\in C(V) $ be a function.
	The following are equivalent.
	\begin{enumerate}
		\item $\Gamma P_t f = e^{-2Kt} P_t \Gamma f$.
		\item $f= \varphi +C$ for a constant $C$ and an eigenfunction $\varphi$ to the eigenvalue $K$ of $-\Delta$.
		\item $\Gamma_2 f = K \Gamma f$.
	\end{enumerate}
	If one of the above statements holds true, we moreover have $\Gamma f = const.$
\end{theorem}
% be proven in section~\ref{sec:sharp CD} on sharp $CD$-inequality.

\subsection{Hypercube shell structure}

Unfortunately, sharp diameter bounds do not imply the graph to be a hypercube in the weighted case (see section~\ref{sec:Examples}). But nevertheless, we can characterize diameter sharpness via a geometric property roughly stating that the graph has the same amount of edges between the spheres as the hypercube. This property is the following.%(see Definition~\ref{def:hypercube shell structure}).

%\textcolor{blue}{
	\begin{defn}[Hypercube shell structure]\label{def:hypercube shell structure}
		We say that a weighted graph $G=(V,w,m)$ has the hypercube shell structure $HSS(N,W,x_0)$ with dimension $N \in (0,\infty)$ and weight $W \in (0,\infty)$ w.r.t. $x_0 \in V$ if\begin{enumerate}
			\item $G$ has constant vertex degree $\Deg(x)=NW$ for all $x\in V$,
			\item $G$ is bipartite,
			\item $d_-^{x_0}(x)=W\cdot d(x,x_0)$ for all $x \in V$.				
		\end{enumerate}
		We say a that graph $G=(V,w,m)$ has the hypercube shell structure $HSS(N,W)$, if there exists $x_0$, s.t. $G$ has the the  hypercube shell structure $HSS(N,W,x_0)$.
	\end{defn}
%}

Intuitively, the hypercube shell structure determines the strength of the connection between vertices at distance $d$ from $x_0$ and shells, i.e., spheres of radius $d-1$ around $x_0$, but not between two certain vertices. 

%\textcolor{blue}{
	\begin{example}
		It is straightforward to confirm that the unweighted $N$-dimensional hypercube has the hypercube shell structure $HSS(N,1,x_0)$ for all $x_0 \in V$.		
	\end{example}
%}

We now state the announced equivalence of diameter sharpness and the hypercube shell structure.% (see $\stackrel{2}{\Leftrightarrow}$, $\stackrel{3}{\Leftrightarrow}$, $\stackrel{4}{\Leftrightarrow}$ and $\stackrel{5}{\Leftrightarrow}$) in Figure~\ref{fig:results} in section \ref{sec: Hypercube char}.

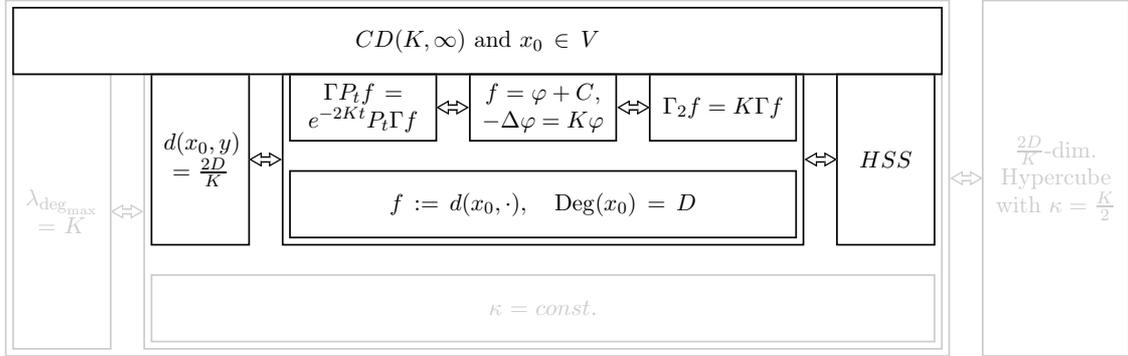
\begin{figure}[h]
	%	\centering	
	\resizebox{15cm}{!}
	{
		\begin{tikzpicture}[thick]
		
		\tikzstyle{doubleRedArrow} = [thick, decoration={markings,mark=at position
			5.5pt with {\arrow[semithick, color=\colfill]{triangle 60 reversed}}},
		decoration={markings,mark=at position
			5.5pt with {\arrow[semithick, color=\colline]{open triangle 60 reversed}}},
		decoration={markings,mark=at position
			0.99999 with {\arrow[semithick, color = \colfill]{triangle 60}}},
		decoration={markings,mark=at position
			1 with {\arrow[semithick, color = \colline]{open triangle 60}}},
		double distance=1.4pt,
		shorten <= 5.5pt,
		shorten >= 5.5pt,
		preaction = {decorate},
		postaction = {draw,line width=1.4pt, \colfill,shorten >= 4.5pt, shorten <= 4.5pt},
		color=\colline
		]

		\renewcommand{\colline}{black}

		\node[draw,rectangle, color=\colline,align=center,text width = 2cm, minimum height = \minheight, minimum width = 2.4cm] (Pt) at (\boxmid-\boxdist,\y){$\Gamma P_t f = e^{-2Kt} P_t \Gamma f$};
		\node[draw, color=\colline,rectangle,align=center,text width = 2cm, minimum height = \minheight, minimum width = 2.4cm] (phi) at (\boxmid,\y){$f = \varphi + C$, $ -\Delta \varphi = K\varphi$};
		\node[draw, color=\colline,rectangle,align=center,text width = 2cm, minimum height = \minheight, minimum width = 2.4cm] (G2) at (\boxmid+\boxdist,\y){$\Gamma_2f= K\Gamma f$};

		\node[draw, color=\colline,rectangle,align=center,text width = 6.0cm, minimum height = \minheight, minimum width = 2*\boxdist + \boxwidth] (dist) at (\boxmid,0cm+\yi){$f:=d(x_0,\cdot), \quad \Deg(x_0)=D$};

		\renewcommand{\colline}{black}
		
		\node[draw, color=\colline,rectangle,align=center,text width = 6.0cm, minimum height = \y-\yi + \minheight + 2*\boxpt, minimum width = 2*\boxdist+\boxwidth+ 4*\boxpt] (distbigbox) at (\boxmid,\y/2 +\yi/2 - \boxpt){};
		\node[draw, color=\colline,rectangle,align=center, text width = 1.2cm, minimum height = \y-(\yi) + \minheight + 2*\boxpt, minimum width = \HSSwidth] (HSS) at (\boxmid + \boxdist+\boxwidth/2+ 2*\boxpt + \HSSwidth/2 + \boxbetween,\y/2 +\yi/2 - \boxpt){$HSS$};
		
		%\node[draw,rectangle,align=center, text width = 1.2cm, minimum height = \y- \yi + \minheight + 2*\boxpt, minimum width = \HSSwidth] (dist) at (\boxmid - \boxdist -\boxwidth/2- 2*\boxpt - \HSSwidth/2  -\boxdist +\boxwidth ,\y/2 + \yi/2 - \boxpt){${\lambda_{\deg_{\max}}}$ $= K$};
		
		\node[draw, color=\colline,rectangle,align=center, text width = 1.2cm, minimum height = \y- \yi + \minheight + 2*\boxpt, minimum width = \HSSwidth] (diam) at (\boxmid - \boxdist -\boxwidth/2- 2*\boxpt - \HSSwidth/2  -\boxdist +\boxwidth ,\yih - \boxpt){$d(x_0,y)$ $= \frac{2D}K$};
		
		%\node[draw,rectangle,align=center, text width = 1.2cm, minimum height = \y- \yi + \minheight + 2*\boxpt, minimum width = \HSSwidth] (diam) at (\boxmid - \boxdist -\boxwidth/2- 2*\boxpt - \HSSwidth/2  -\boxdist +\boxwidth ,\yih - \boxpt){$\|d(x_0,\cdot)\|$ $= \frac{2D}K$};

		%\node[draw,rectangle,align=center,text width = 2.0cm, minimum height = \minheight, minimum width = 18cm] (Pt) at (\boxmid,\y + \minheight){$CD(K,\infty)$};

		\renewcommand{\colline}{light-gray}
		
		\node[draw, color=\colline,rectangle,align=center,text width = 2.0cm, minimum height = \minheight, minimum width = 2*(\withbig)] (kappa) at (\boxmid,\y - 2*\minheight - 2*\boxpt-4*\ybetween){$\kappa=const.$};

		\node[draw, color=\colline,rectangle,align=center,text width = 2.0cm, minimum height = 3*\minheight + 4*\ybetween+4*\boxpt, minimum width = 2*(\withbig)+4*\boxpt] (kappabigbox) at (\boxmid,\y - \minheight - 2*\ybetween -2*\boxpt){};
		
		\node[draw, color=\colline,rectangle,align=center,text width = 1.2cm, minimum height = 3*\minheight + 4*\ybetween+4*\boxpt, minimum width = \HSSwidth] (lambda) at (\boxmid - 1.5*\boxwidth - 1.5*\HSSwidth - 3*\boxdist + 3*\boxwidth - 4*\boxpt,\y - \minheight - 2*\ybetween -2*\boxpt){${\lambda_{\deg_{\max}}}$ $= K$};
		
		\renewcommand{\colline}{black}		
		
		\node[draw, color=\colline,rectangle,align=center,text width = 5.0cm, minimum height = \minheight, minimum width = \HSSwidth + \boxdist-\boxwidth + 2*(\withbig)+4*\boxpt ] (CD) at (\boxmid- \boxwidth/2 - \boxdist/2 + \boxwidth/2 + 0.5*\boxwidth - 0.5*\HSSwidth,\y + \minheight){$CD(K,\infty)$ and $x_0 \in V$};
		
		\renewcommand{\colline}{light-gray}	
		
		\node[draw, color=\colline,rectangle,align=center,text width = 2.0cm, minimum height = 4*\minheight + 4*\ybetween + 8*\boxpt, minimum width = \HSSwidth + \boxdist-\boxwidth + 2*(\withbig)+8*\boxpt ] (CDbigbox) at (\boxmid- \boxwidth/2 - \boxdist/2 + \boxwidth/2 + 0.5*\boxwidth - 0.5*\HSSwidth,\y - \minheight/2 - 2*\ybetween - 2*\boxpt){};

		\node[draw, color=\colline,rectangle,align=center,text width = 2.0cm, minimum height = 4*\minheight + 4*\ybetween + 8*\boxpt, minimum width = \boxwidth ] (HC) at (\boxmid + 2*\boxwidth +3*\boxdist -3*\boxwidth + \HSSwidth + 6*\boxpt  ,\y - \minheight/2 - 2*\ybetween - 2*\boxpt){$\frac{2D}{K}$-dim. Hypercube with $\kappa = \frac K 2$};

		\renewcommand{\colline}{black}			
		
		\draw[doubleRedArrow] (Pt.east)  to  node[above] {}  (phi.west); 
		\draw[doubleRedArrow] (G2.west)  to  node[above] {}  (phi.east);

		\renewcommand{\colline}{light-gray}

		\draw[doubleRedArrow] (HC.west)  to node[above] {}  (CDbigbox.east) ; 
		\draw[doubleRedArrow] (lambda.east)  to  node[above] {}  (kappabigbox.west); 
		
		\renewcommand{\colline}{black}
		
		\draw[doubleRedArrow] (diam.east)  to node[above] {}  (distbigbox.west); 
		\draw[doubleRedArrow] (HSS.west)  to  node[above] {}  (distbigbox.east);

		\end{tikzpicture}
	}
	\caption{This is a scheme of Theorem~\ref{thm:weighted Cheng INTRO}. The box $HSS$ is an abbreviation for the hypercube shell structure $HSS(\frac{2D}K,\frac K 2, x_0)$.
	}
	\label{fig:Cheng}
\end{figure}

\begin{theorem}[Diameter sharpness for weighted graphs]\label{thm:weighted Cheng INTRO}
	Let $G=(V,w,m)$ be a connected graph satisfying $CD(K,\infty)$ for some $K>0$. Let $x_0 \in V$ and let $f_0 := d(x_0,\cdot)$. Suppose $D:=\D<\infty$. The following are equivalent:
	\begin{enumerate}
		\item There exists $y\in V$ s.t. $d(x_0,y) = \frac {2D}K$.
		\item $\Deg(x_0)=D$ and $\Gamma P_t f_0 = e^{-2Kt} P_t \Gamma f_0$.
		\item $\Deg(x_0)=D$ and $f_0= \varphi +C$ for a constant $C$ and an eigenfunction $\varphi$ to the eigenvalue $K$ of $-\Delta$.
		\item $\Deg(x_0)=D$ and $\Gamma_2 f_0 = K \Gamma f_0$.
		\item $G$ has the  hypercube shell structure $HSS\left(\frac{2D}K,\frac K 2,x_0 \right)$.
	\end{enumerate}
\end{theorem} 
The theorem will reappear as Theorem~\ref{thm:weighted Cheng}.

%The proof can be found in section~\ref{sec:Sharp diameter bounds}.

Indeed, there are graphs apart from the hypercube with hypercube shell structure $HSS\left(\frac{2D}K,\frac K 2,x_0 \right)$ satisfying $CD(K,\infty)$.
Examples are given in Corollary~\ref{cor:HSS but no hypercube}.

Based on the theorem, it seems natural to ask whether $HSS$ by itself already implies positive curvature. But this turns out to be false (see Example~\ref{Ex:HSS no CD}).

The hypercube shell structure already determines the volume growth of the graph.
\begin{proposition}
	Let $G=(V,w,m)$ be a weighted graph satisfying $HSS(N,W,x_0)$ for some $x_0 \in V$. Then,
	$$m(S_n(x_0)) = m(x_0)\cdot  {N \choose n}.$$
\end{proposition}

\begin{proof}
We first remark that by bipartiteness, one has $d_-^{x_0}(y) + d_+^{x_0}(y) = \Deg(y)$ for all $y \in V$.	
Therefore, the hypercube shell structure $HSS(N,W,x_0)$ implies
\begin{align*}
W\cdot m(S_k(x_0))(N - k) &= m(S_k(x_0))(\D  - kW)\\&=\sum_{y \in S_{k}(x_0)} m(y)d_+^{x_0}(y) \\&=	w(S_k(x_0),S_{k+1}(x_0)) \\&= \sum_{z \in S_{k+1}(x_0)} m(z)d_-^{x_0}(z)
\\&= m(S_{k+1}(x_0)) W (k+1).
\end{align*}
Hence,
$$\frac{m(S_{k+1}(x_0))}{m(S_k(x_0))}= \frac{N-k}{k+1}$$
which implies $m(S_k(x_0)) = m(x_0){N \choose k}$ via
induction. This finishes the proof.
\end{proof}

\subsection{Constant edge degree}\label{sec: const edge deg}

To characterize the hypercube, and not only the hypercube shell structure via diameter sharpness, we need a further assumption on the uniformity of the edge weight and vertex measure.  This assumption is the constancy of the edge degree (see Definition~\ref{def:uniform edge degree}).

We give a very basic characterization of constant edge degree which will be our further assumption to characterize the hypercube via diameter sharpness. One characterization refers to the unweighted representation which was defined in Definition~\ref{def: unweighted Repr}.
\begin{lemma}\label{l:uniform edge degree basic}
	Let $G=(V,w,m)$ be a weighted connected graph.  Let $\Delta$ be the Laplacian corresponding to $G$ and let $\widetilde \Delta$ be the Laplacian corresponding to the unweighted representation $\widetilde G$ of $G$.	Let $\kappa_0>0$.
	The following are equivalent.\begin{enumerate}
		\item G has constant edge degree $\kappa_0$.
		\item $m(x)=m_0=const$ and $w(x,y) \in \{0,\kappa_0 m_0\}$.
		\item $\Delta = \kappa_0 \widetilde \Delta$.
	\end{enumerate}
\end{lemma}
\begin{proof}
	Implications $2 \Rightarrow 3$  and $3 \Rightarrow 1$ are trivial.
	For proving $1 \Rightarrow 2$, we observe that $\kappa(x,y)=\kappa(y,x)=\kappa_0$ for $x \sim y$. This directly implies $m(x)=m(y)$. Since $G$ is connected, $m$ must be constant on $V$ which easily implies $w(x,y) \in \{0,\kappa_0 m_0\}$.
\end{proof}

In the second assertion of the lemma, we see that a graph $G$ with constant edge degree can be considered as a scaled variant of the unweighted representation $\widetilde G$ of $G$.
We now investigate the compatibility between the scaling behavior of the edge degree, the curvature dimension inequality $CD$ and the hypercube shell structure $HSS$.

	\begin{lemma}\label{l:uniform weights scaling}
		Let $G=(V,w,m)$ be a graph with constant edge degree $\kappa_0$. Let $K \in \IR$ and $n,D>0$ and let $x_0 \in V$. Then,
		\begin{enumerate}
			\item [(i)]$G$ satisfies $CD(\kappa_0 K,n)$ if and only if
			$\widetilde G$ satisfies $CD(K,n)$.
			\item [(ii)] $G$ has the  hypercube shell structure $HSS(D,W,x_0)$ if and only if $W=\kappa_0$ and $\widetilde G$ has the  hypercube shell structure $HSS(D,1,x_0)$.
		\end{enumerate}
	\end{lemma}
	\begin{proof}
		The  first assertion of the lemma easily follows from the fact that a graph $G$ with constant edge degree is a scaled version of its unweighted representation $\widetilde G$ and from the scaling behavior of the curvature dimension condition $CD$.
		
		We finally prove the second assertion.
		Assume $G$ satisfies $HSS(D,W,x_0)$. %Then, there exists $x_0$ s.t. for all $y \sim x_0$,
		Then for all $y\sim x_0$, one has
		\begin{align*}
		\kappa_0=\frac{w(x_0,y)}{m(y)}= d_-^{x_0}(y) = Wd(x_0,y)= W.
		\end{align*}
		This easily implies that $\widetilde G$ satisfies $HSS(D,1)$.
		Vice versa, if $\widetilde G$ satisfies $HSS(D,1)$ and if $G$ has constant edge degree $W=\kappa_0$, then it is straight forward to see that $G$ satisfies $HSS(D,W)$.
	\end{proof}

If we want to characterize the hypercube via diameter sharpness, we need to assume a constant edge degree. Surprisingly, if, in contrast, we want to characterize the hypercube via eigenvalue sharpness, we get the hypercube shell structure $HSS$ and a constant edge degree for free:

\begin{figure}[h]
	%	\centering	
	\resizebox{15cm}{!}
	{
		\begin{tikzpicture}[thick]
		
		\tikzstyle{doubleRedArrow} = [thick, decoration={markings,mark=at position
			5.5pt with {\arrow[semithick, color=\colfill]{triangle 60 reversed}}},
		decoration={markings,mark=at position
			5.5pt with {\arrow[semithick, color=\colline]{open triangle 60 reversed}}},
		decoration={markings,mark=at position
			0.99999 with {\arrow[semithick, color = \colfill]{triangle 60}}},
		decoration={markings,mark=at position
			1 with {\arrow[semithick, color = \colline]{open triangle 60}}},
		double distance=1.4pt,
		shorten <= 5.5pt,
		shorten >= 5.5pt,
		preaction = {decorate},
		postaction = {draw,line width=1.4pt, \colfill,shorten >= 4.5pt, shorten <= 4.5pt},
		color=\colline
		]

		\tikzstyle{RedArrow} = [thick, decoration={markings,mark=at position
			5.5pt with {\arrow[semithick, color=\colfill]{triangle 60 reversed}}},
		decoration={markings,mark=at position
			5.5pt with {\arrow[semithick, color=\colline]{open triangle 60 reversed}}},
		%decoration={markings,mark=at position
		%	0.99999 with {\arrow[semithick, color = \colfill]{triangle 60}}},
		%decoration={markings,mark=at position
		%	1 with {\arrow[semithick, color = \colline]{open triangle 60}}},
		double distance=1.4pt,
		shorten <= 5.5pt,
		%shorten >= 5.5pt,
		preaction = {decorate},
		postaction = {draw,line width=1.4pt, \colfill,shorten >= 4.5pt, shorten <= 4.5pt},
		color=\colline
		]

		\renewcommand{\colline}{light-gray}

		\node[draw,rectangle, color=\colline,align=center,text width = 2cm, minimum height = \minheight, minimum width = 2.4cm] (Pt) at (\boxmid-\boxdist,\y){$\Gamma P_t f = e^{-2Kt} P_t \Gamma f$};
		\node[draw, color=\colline,rectangle,align=center,text width = 2cm, minimum height = \minheight, minimum width = 2.4cm] (phi) at (\boxmid,\y){$f = \varphi + C$, $ -\Delta \varphi = K\varphi$};

		\node[draw, color=\colline,rectangle,align=center,text width = 2cm, minimum height = \minheight, minimum width = 2.4cm] (G2) at (\boxmid+\boxdist,\y){$\Gamma_2f= K\Gamma f$};

		\node[draw, color=\colline,rectangle,align=center,text width = 6.0cm, minimum height = \minheight, minimum width = 2*\boxdist + \boxwidth] (dist) at (\boxmid,0cm+\yi){$f:=d(x_0,\cdot), \quad \color{light-gray} \Deg(x_0)=D$};

		\node[draw, color=\colline,rectangle,align=center,text width = 6.0cm, minimum height = \y-\yi + \minheight + 2*\boxpt, minimum width = 2*\boxdist+\boxwidth+ 4*\boxpt] (distbigbox) at (\boxmid,\y/2 +\yi/2 - \boxpt){};
		
        \renewcommand{\colline}{black}			
		
		\node[draw, color=\colline,rectangle,align=center, text width = 1.2cm, minimum height = \y-(\yi) + \minheight + 2*\boxpt, minimum width = \HSSwidth] (HSS) at (\boxmid + \boxdist+\boxwidth/2+ 2*\boxpt + \HSSwidth/2 + \boxbetween,\y/2 +\yi/2 - \boxpt){$HSS$};

		\renewcommand{\colline}{light-gray}		
		%\node[draw,rectangle,align=center, text width = 1.2cm, minimum height = \y- \yi + \minheight + 2*\boxpt, minimum width = \HSSwidth] (dist) at (\boxmid - \boxdist -\boxwidth/2- 2*\boxpt - \HSSwidth/2  -\boxdist +\boxwidth ,\y/2 + \yi/2 - \boxpt){${\lambda_{\deg_{\max}}}$ $= K$};
		
		\node[draw, color=\colline,rectangle,align=center, text width = 1.2cm, minimum height = \y- \yi + \minheight + 2*\boxpt, minimum width = \HSSwidth] (diam) at (\boxmid - \boxdist -\boxwidth/2- 2*\boxpt - \HSSwidth/2  -\boxdist +\boxwidth ,\yih - \boxpt){$d(x_0,y)$ $= \frac{2D}K$};
		
		%\node[draw,rectangle,align=center, text width = 1.2cm, minimum height = \y- \yi + \minheight + 2*\boxpt, minimum width = \HSSwidth] (diam) at (\boxmid - \boxdist -\boxwidth/2- 2*\boxpt - \HSSwidth/2  -\boxdist +\boxwidth ,\yih - \boxpt){$\|d(x_0,\cdot)\|$ $= \frac{2D}K$};

		%\node[draw,rectangle,align=center,text width = 2.0cm, minimum height = \minheight, minimum width = 18cm] (Pt) at (\boxmid,\y + \minheight){$CD(K,\infty)$};

		\renewcommand{\colline}{black}			
		
		\node[draw, color=\colline,rectangle,align=center,text width = 2.0cm, minimum height = \minheight, minimum width = 2*(\withbig)] (kappa) at (\boxmid,\y - 2*\minheight - 2*\boxpt-4*\ybetween){$\kappa=const.$};

		\node[draw, color=\colline,rectangle,align=center,text width = 2.0cm, minimum height = 3*\minheight + 4*\ybetween+4*\boxpt, minimum width = 2*(\withbig)+4*\boxpt] (kappabigbox) at (\boxmid,\y - \minheight - 2*\ybetween -2*\boxpt){};
		
		\renewcommand{\colline}{black}	
		
		\node[draw, color=\colline,rectangle,align=center,text width = 1.2cm, minimum height = 3*\minheight + 4*\ybetween+4*\boxpt, minimum width = \HSSwidth] (lambda) at (\boxmid - 1.5*\boxwidth - 1.5*\HSSwidth - 3*\boxdist + 3*\boxwidth - 4*\boxpt,\y - \minheight - 2*\ybetween -2*\boxpt){${\lambda_{\deg_{\max}}}$ $= K$};

		\node[draw, color=\colline,rectangle,align=center,text width = 5.0cm, minimum height = \minheight, minimum width = \HSSwidth + \boxdist-\boxwidth + 2*(\withbig)+4*\boxpt ] (CD) at (\boxmid- \boxwidth/2 - \boxdist/2 + \boxwidth/2 + 0.5*\boxwidth - 0.5*\HSSwidth,\y + \minheight){$CD(K,\infty)$ and $x_0 \in V$};

		\renewcommand{\colline}{light-gray}
		
		\node[draw, color=\colline,rectangle,align=center,text width = 2.0cm, minimum height = 4*\minheight + 4*\ybetween + 8*\boxpt, minimum width = \HSSwidth + \boxdist-\boxwidth + 2*(\withbig)+8*\boxpt ] (CDbigbox) at (\boxmid- \boxwidth/2 - \boxdist/2 + \boxwidth/2 + 0.5*\boxwidth - 0.5*\HSSwidth,\y - \minheight/2 - 2*\ybetween - 2*\boxpt){};

		\node[draw, color=\colline,rectangle,align=center,text width = 2.0cm, minimum height = 4*\minheight + 4*\ybetween + 8*\boxpt, minimum width = \boxwidth ] (HC) at (\boxmid + 2*\boxwidth +3*\boxdist -3*\boxwidth + \HSSwidth + 6*\boxpt  ,\y - \minheight/2 - 2*\ybetween - 2*\boxpt){$\frac{2D}{K}$-dim. Hypercube with $\kappa = \frac K 2$};

		\renewcommand{\colline}{light-gray}			
		
		\draw[doubleRedArrow] (Pt.east)  to  node[above] {}  (phi.west); 
		\draw[doubleRedArrow] (G2.west)  to  node[above] {}  (phi.east);		
		
		\renewcommand{\colline}{light-gray}
		
		\draw[doubleRedArrow] (diam.east)  to node[above] {}  (distbigbox.west); 
		\draw[doubleRedArrow] (HSS.west)  to  node[above] {}  (distbigbox.east);

		\draw[doubleRedArrow] (HC.west)  to node[above] {} 
		(CDbigbox.east) ; 
		
		\renewcommand{\colline}{black}	
		\draw[RedArrow] (kappabigbox.west)  to  node[above] {}  (lambda.east);

		\end{tikzpicture}
	}
	\caption{A scheme of Theorem~\ref{thm:Obata INTRO}}
	\label{fig:Obata}
\end{figure}
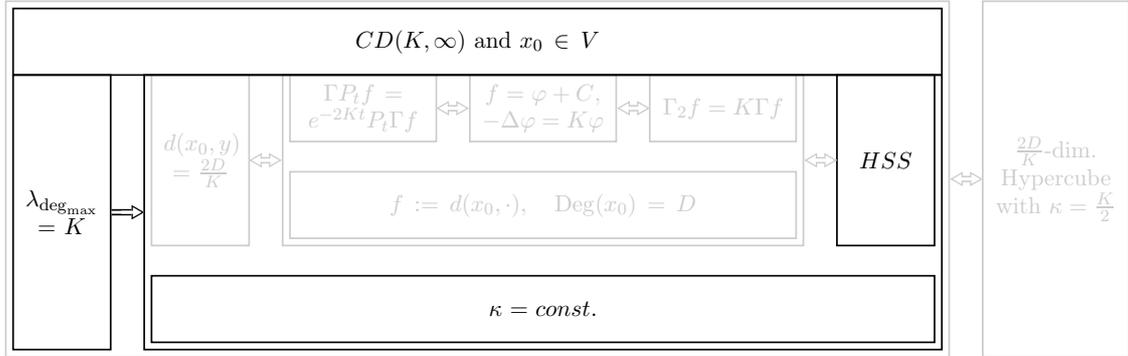

\begin{theorem}[Eigenvalue sharpness]\label{thm:Obata INTRO}
	Let $G=(V,w,m)$ be a connected graph
	with $\D < \infty$ and
	satisfying $CD(K,\infty)$ for some $K>0$.
	%Let $x \in V$ and suppose $\lambda_{\Deg(x)} = K$.
	%Let $x_0$ s.t. $D:=\Deg(x_0)=\D$. and suppose
	%Let $x_0 \in V$.	
	Suppose $K=\lambda_{\deg_{\max}}.$
	%	\textcolor{blue}{
	Then, the following hold true.
		\begin{enumerate}
			\item [1)] 	$G$ satisfies $HSS(\frac{2D}K,\frac K 2,x_0)$ for arbitrary $x_0 \in V$.
			%\item $s_n := \# S_n(x_0) = {D \choose n} $
			{\item [2)] $G$ has constant edge degree.}
		\end{enumerate}
\end{theorem}
This theorem will reappear as Theorem~\ref{thm:Obata}.
%In particular, statement $1)$ is precisely the third of the equivalent statements of Theorem~\ref{thm:abstract CD sharpness INTRO} for $f=d(x_0,\cdot)$.
%Accordingly, the idea to characterize diameter and eigenvalue sharpness is to apply Theorem~\ref{thm:abstract CD sharpness INTRO} to the distance function $f_0=d(x_0,\cdot)$.
In our view, the main achievement in this article is to prove the graph to be a hypercube assuming $CD(K,\infty)$, the hypercube shell structure and a constant edge degree.
%This problem turns out to be purely combinatorial.

\subsection{Small sphere property and non-clustering property}

One key in our approach is to reduce Bakry-\'Emery's curvature-dimension
condition to the combinatorial properties given in
	Definition \ref{def: SSP NCP} below. We remind that
    $d_-^x$ denotes the backwards-degree w.r.t $x$. For unweighted graphs, $d_-^x(y)$
	is the number of neighbors of $y$ closer to $x$ than $y$ itself.

\begin{defn}\label{def: SSP NCP}
	Let $G=(V,E)$ be an unweighted $D$-regular graph, let $K>0$ and let $x \in V$.
	\begin{enumerate}
		\item[(SSP)] We say $x$ satisfies the \emph{small sphere property}
		(SSP) if
		\begin{align*}
		\#S_2(x) \leq {D \choose 2}.
		\end{align*}
		\item[(NCP)] We say $x$ satisfies the \emph{non-clustering property}
		(NCP) if, whenever $d_-^x(z) = 2$ holds for all $z \in S_2(x)$,
		one has that for all $y_1,y_2\in S_1(x)$ there is at most one
		$z \in S_2(x)$ satisfying $y_1\sim z \sim y_2$.
	\end{enumerate}
	We say, $G$ satisfies (SSP) or (NCP), respectively, if (SSP) or
	(NCP), respectively, are satisfied for all $x \in V$.
\end{defn}

We will show that both properties (SSP) and (NCP) follow from
the curvature-dimension condition $CD(2,\infty)$.
Remark that unweighted hypercubes satisfy $CD(2,\infty)$, and therefore as well (SSP) and (NCP). %, \textcolor{red}{defined above in Definition \ref{def:BE-curv}}.

\begin{theorem}[Bakry-\'Emery-curvature, (SSP) and (NCP)]
	\label{thm:local combinatorics CD}
	Let G=(V,E) be a $D$-regular bipartite graph satisfying
	$CD(2,\infty)$ at some point $x \in V$. Then $x$ satisfies the
	small two-sphere property (SSP) and the non-clustering property
	(NCP).
\end{theorem}

This theorem reappears as Theorem~\ref{thm:local_isomorphic}.
We point out the subtlety of (SSP) and (NCP) since already small changes of (NCP) affect our approach that it no longer works (see Lemma~\ref{l:figure no stronger NCP} and Figure~\ref{fig:nostrongerNCP} below).
However, appropriate use of the properties (SSL) and (NCP) defined above allows us to reduce diameter sharpness and eigenvalue sharpness to a purely combinatorial problem which can be solved by a tricky, but direct calculation as stated in the following theorem which will reappear as %Theorem~\ref{thm:local combinatorial characterization} and
 Corollary~\ref{cor:SSL NCP hypercube}.

\begin{theorem}\label{thm:SSL NCP hypercube INTRO}
	Let $G=(V,E)$ be a  graph with the hypercube shell structure $HSS(D,1)$. Suppose, $G$ satisfies (SSL) and (NCP). Then, $G$ is isomorphic to the $D$-dimensional hypercube.
\end{theorem}

%We now turn to a summary of our results. 
%In particular, we now have all ingredients to state the main theorem, i.e., the characterization of the hypercube via Bakry \'Emery curvature. Indeed, assuming the previous results of this section is sufficient to give a short proof of our main theorem.

%\newpage

\subsection{Hypercube characterization}\label{sec: Hypercube char}

Using the concepts explained above, we now characterize the hypercube in the weighted setting.

\begin{theorem}[Main theorem] \label{thm:main} Let $G=(V,w,m)$ be a
	weighted (i.e., without loops and multiple edges)
	connected graph. Let $K>0$. Let $x_0$ s.t. $D:=\Deg(x_0)=\D$.  Let $\deg_{max}$ be the maximal combinatorial degree, i.e. the maximal number of neighbors of a vertex and let
	$0=\lambda_0<\lambda_1 \leq \lambda_2 \leq\ldots$ be the eigenvalues
	of the graph Laplacian $-\Delta$, defined in
	\eqref{eq:lapunweigh} above. The following are
		equivalent:
	\begin{enumerate}
		\item $G$ is a $\frac{2D}{K}$-dimensional hypercube with constant edge degree $\kappa=\frac K 2$.  \label{char:hypercube}
		\item $G$ satisfies $CD(K,\infty)$ and
		$\lambda_{\deg_{\max}} = {K}$. \label{char:Lichnerowicz}
		\item $G$ satisfies $\kappa=const.$ and $CD(K,\infty)$, and
		$\diam_d(G) =  \frac{2D}K$. \label{char:diameter bound}
		\item $G$ satisfies $\kappa=const.$, the hypercube shell structure $HSS\left(\frac{2D}K,\frac K 2\right)$ and $CD(K,\infty)$.  \label{char:CD}
		\item $G$ has constant edge degree $\kappa=\frac K 2$ and the unweighted representation $\widetilde G$ has the hypercube shell structure $HSS\left(\frac{2D}K, 1\right)$ and satisfies (SSP) and (NCP). \label{char:SSP NCP}
	\end{enumerate}
\end{theorem}

A diagram of the proof is given in Figure~\ref{fig:main proof}.
We prove the main theorem under assumption of correctness of all previous results of this section. The correctness of the previous results is shown in the subsequent sections independently of the main theorem.

%[proof of the main theorem]
%We only sketch here the steps of the proof for the reader's convenience:

%	\begin{center}	
		\begin{figure}[h]
			%	\centering	
			\resizebox{15cm}{!}{
				\begin{tikzpicture}[thick]
				
				\node[draw,rectangle] (diam) at (2.8,2.4){$\diam_d=\frac{2D}K$};
				
				\node[draw,rectangle] (HSS_K/2) at (2.8,0){$HSS\left(\frac{2D}{K},\frac K 2\right)$};
				
				\node[draw,rectangle, align=center, text width = 2.2cm] (HSS_1) at (2.8,-3.6){$\widetilde G$ satisfies $HSS\left(\frac{2D}{K},1\right)$};
				
				\node[draw,rectangle] (CD_K_infty) at (6.8,1.2){$CD(K,\infty)$};
				
				\node[draw,rectangle] (lambda=K) at (5.8,-1.2){$\lambda_{\deg_{\max}}=K$};
				
				\node[draw,rectangle] (kappa_const) at (8.5,0){$\kappa=const.$};
				
				\node[draw,rectangle] (kappa=K/2) at (10,-2.4){$\kappa=\frac K 2$};
				
				\node[draw,rectangle, align=center, text width = 1.9cm] (CD_2_infty) at (12,1.2){$\widetilde G$ satisfies  $CD(2,\infty)$};
				
				\node[draw,rectangle, align=center, text width = 2.7cm] (regular bipartite) at (14.5,-3.6){$\widetilde G$ is $\frac{2D}K$-regular $+$ bipartite};
				
				\node[draw,rectangle, align=center, text width = 2.7cm] (SSP NCP) at (17.4,1.2){$\widetilde G$ satisfies  (SSP) + (NCP)};
				
				\node[draw,rectangle,align=center, text width = 2.7cm] (Hypercube) at (19.4,-4.8){$\frac{2D}K$-dimensional hypercube};

				\coordinate (dashed cross CD K infty and diam) at (6.8,2.4){};
				
				\coordinate(dashed cross kappa=K/2 and lambda=K) at (11.3,-1.2){};

				\color{black}
				\draw[dotted,vecArrow] (Hypercube)  |-  (diam);  	
				\draw[dotted,vecArrow] (Hypercube) |- (lambda=K);  
				\draw[dotted,vecArrow] (dashed cross CD K infty and diam) to (CD_K_infty);	
				\draw[dotted,doubleLine] (kappa=K/2) -| (dashed cross kappa=K/2 and lambda=K);	
				\draw[innerWhite] (Hypercube) |- (diam);  	
				\draw[innerWhite] (Hypercube) |- (lambda=K);
				\draw[innerWhite] (dashed cross CD K infty and diam) to (CD_K_infty);
				\draw[innerWhite] (kappa=K/2) -| (dashed cross kappa=K/2 and lambda=K);

				%\color{blue}
				\draw[vecArrow] (lambda=K) |- (HSS_K/2);
				\draw[vecArrow] (lambda=K) |- (kappa_const);  
				\draw[vecArrow] (CD_K_infty) |- (HSS_K/2)
				node[above,pos=0.75]{Theorem~\ref{thm:Obata INTRO}};
				\draw[vecArrow] (CD_K_infty) |- (kappa_const);  
				\draw[innerWhite] (lambda=K) |- (HSS_K/2);
				\draw[innerWhite] (lambda=K) |- (kappa_const);  
				\draw[innerWhite] (CD_K_infty) |- (HSS_K/2);
				\draw[innerWhite] (CD_K_infty) |- (kappa_const); 
				
				%\color{gray}
				\draw[vecArrow] (diam) to (HSS_K/2);  
				\draw[vecArrow] (CD_K_infty) -| (HSS_K/2)
				node[above,pos=0.25]{Theorem~\ref{thm:weighted Cheng INTRO}};
				\draw[innerWhite] (diam) to (HSS_K/2);  
				\draw[innerWhite] (CD_K_infty) -| (HSS_K/2); 
				
				%\color{magenta}
				\draw[vecArrow] (CD_K_infty) --node[above]{Lemma~\ref{l:uniform weights scaling} $(i)$} (CD_2_infty);
				\draw[vecArrow] (kappa=K/2) |- (CD_2_infty); 
				\draw[innerWhite] (CD_K_infty) |- (CD_2_infty);
				\draw[innerWhite] (kappa=K/2) |- (CD_2_infty);

				%\color{red}
				\draw[vecArrow] (HSS_1) --node[above]{Definition~\ref{def:hypercube shell structure}} (regular bipartite);  
				
				%\color{brown}
				\draw[vecArrow] (regular bipartite)|- (SSP NCP);	 
				\draw[vecArrow] (CD_2_infty) -- node[above]{Theorem~\ref{thm:local combinatorics CD}}  (SSP NCP);
				
				\draw[innerWhite] (regular bipartite)|- (SSP NCP);
				
				%\color{orange}
				\draw[vecArrow] (HSS_K/2) to (HSS_1);   
				\draw[vecArrow] (kappa_const) |- (kappa=K/2);
				\draw[vecArrow] (HSS_K/2) |- (kappa=K/2)
				node[above,pos=0.75]{Lemma~\ref{l:uniform weights scaling} $(ii)$} (CD_2_infty);   
				\draw[innerWhite] (HSS_K/2) to (HSS_1);   
				\draw[innerWhite] (kappa_const) |- (kappa=K/2);
				\draw[innerWhite] (HSS_K/2) |- (kappa=K/2);	
				
				%\color{cyan}
				\draw[vecArrow] (HSS_1) |- (Hypercube)
				node[above,pos=0.75]{Corollary~\ref{thm:SSL NCP hypercube INTRO}};	
				\draw[vecArrow] (SSP NCP) |- (Hypercube);
				\draw[innerWhite] (HSS_1) |- (Hypercube);	 
				\draw[innerWhite] (SSP NCP) |- (Hypercube);

				\end{tikzpicture}
			}
			\caption{The figure is a scheme of the proof. The boxes usually stand for properties of $G$. It is mentioned explicitly if they stand for properties of $\widetilde G$. Every arrow has one or more input boxes which represent the assumptions, and output boxes which represent the conclusion of the corresponding theorem. E.g., the dotted arrow has input boxes '$\kappa=\frac K 2$' and '$\frac{2D}K$-dimensional hypercube', and output boxes '$\lambda_{\deg_{\max}}=K$' and '$CD(K,\infty)$' and '$\diam_d=\frac{2D}K$'.}
			\label{fig:main proof}
		\end{figure}
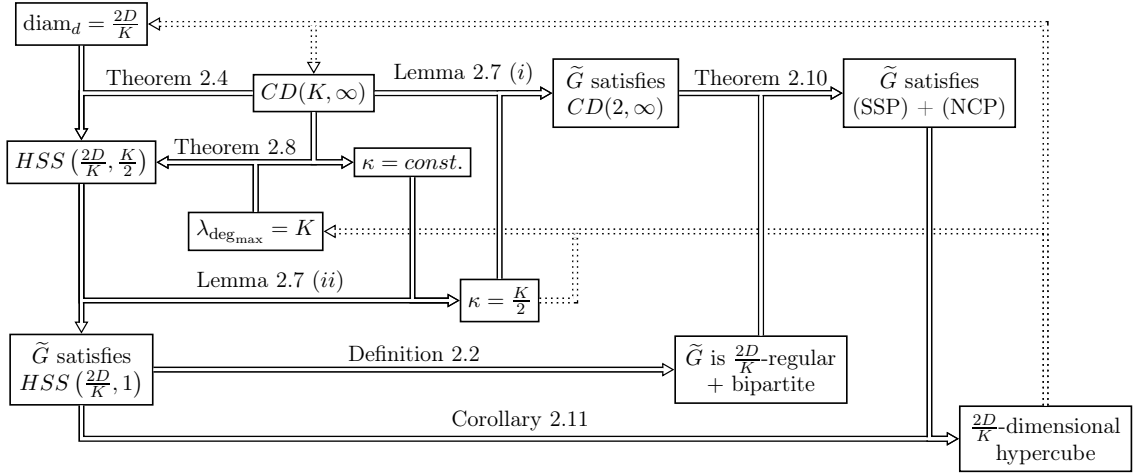			
%	\end{center}
\begin{proof}[Proof of the main theorem]	
	We first notice that the unweighted $\frac {2D}K$-dimensional hypercube satisfies $CD(2,\infty)$, see  \cite{cushing2016bakry,klartag2015discrete,schmuckenschlager1998curvature}.
	By Lemma~\ref{l:uniform weights scaling}(i), we obtain that the $\frac {2D}K$-dimensional hypercube with constant edge degree $\kappa=\frac K 2$ %(visualized by '$\kappa=const.$' in the diagram) 
	satisfies $CD(K,\infty)$.

	The implication \ref{char:hypercube} $\Rightarrow$
	\ref{char:Lichnerowicz} follows since the unweighted 
	hypercube satisfies $\lambda_{\deg_{\max}}=2$ and thus, for the hypercube with constant edge degree $\frac K 2$, we have $\lambda_{\deg_{\max}}=K$.
	
	Similarly, \ref{char:hypercube}
	$\Rightarrow$
	\ref{char:diameter bound} follows since the $\frac{2D}K$-dimensional
	hypercube has diameter $\diam_d(G)=\frac{2D}K$.
	
	%\textcolor{blue}
	{These implications are visualized by the %\textcolor{black} 
		{dotted arrows} 
		in Figure~\ref{fig:main proof}}.
	
	All other theorems, lemmata, corollaries and definitions we refer to in this proof are also shown in Figure~\ref{fig:main proof}.

	The implication \ref{char:Lichnerowicz} $\Rightarrow$
	\ref{char:CD}  follows from 
	Theorem~\ref{thm:Obata INTRO} which is proven via spectral analytic methods, and the implication  \ref{char:diameter bound} $\Rightarrow$ \ref{char:CD} follows from
	Theorem~\ref{thm:weighted Cheng INTRO} which is proven via semigroup properties.

%	We will prove the implication \ref{char:Lichnerowicz} $\Rightarrow$
%	\ref{char:CD}  with Theorem~\ref{thm:Obata INTRO} %\textcolor{blue}
%	using spectral analytic methods, and the implication  \ref{char:diameter bound} $\Rightarrow$ \ref{char:CD} with
%	Theorem~\ref{thm:weighted Cheng INTRO} via semigroup properties.

	The implication \ref{char:CD} $\Rightarrow$ \ref{char:SSP NCP}  
	holds true since Lemma~\ref{l:uniform weights scaling} implies that $\widetilde G$ satisfies $HSS\left(\frac {2D}K,1 \right)$, and that $\kappa=K/2$ and that $\widetilde{G}$ satisfies $CD(2,\infty)$, and therefore, by Definition~\ref{def:hypercube shell structure}  and  Theorem~\ref{thm:local combinatorics CD}, we obtain that $\widetilde G$ satisfies the small sphere property (SSP) and the non-clustering property (NCP).

	The implication \ref{char:SSP NCP} 
	$\Rightarrow$ \ref{char:hypercube} holds true since  Corollary~\ref{thm:SSL NCP hypercube INTRO}  yields that $\widetilde G$, and thus $G$, are  $\frac{2D}K$-dimensional hypercubes.

	Putting together these implications yields the claim of the main
	theorem.
\end{proof}

\begin{figure}[h]
	%	\centering	
	\resizebox{15cm}{!}
	{
		\begin{tikzpicture}[thick]
		
		\tikzstyle{doubleRedArrow} = [thick, decoration={markings,mark=at position
			5.5pt with {\arrow[semithick, color=\colfill]{triangle 60 reversed}}},
		decoration={markings,mark=at position
			5.5pt with {\arrow[semithick, color=\colline]{open triangle 60 reversed}}},
		decoration={markings,mark=at position
			0.99999 with {\arrow[semithick, color = \colfill]{triangle 60}}},
		decoration={markings,mark=at position
			1 with {\arrow[semithick, color = \colline]{open triangle 60}}},
		double distance=1.4pt,
		shorten <= 5.5pt,
		shorten >= 5.5pt,
		preaction = {decorate},
		postaction = {draw,line width=1.4pt, \colfill,shorten >= 4.5pt, shorten <= 4.5pt},
		color=\colline
		]

		\node[draw,rectangle,align=center,text width = 2cm, minimum height = \minheight, minimum width = 2.4cm] (Pt) at (\boxmid-\boxdist,\y){$\Gamma P_t f = e^{-2Kt} P_t \Gamma f$};
		\node[draw,rectangle,align=center,text width = 2cm, minimum height = \minheight, minimum width = 2.4cm] (phi) at (\boxmid,\y){$f = \varphi + C$, $ -\Delta \varphi = K\varphi$};
		\node[draw,rectangle,align=center,text width = 2cm, minimum height = \minheight, minimum width = 2.4cm] (G2) at (\boxmid+\boxdist,\y){$\Gamma_2f= K\Gamma f$};
		\node[draw,rectangle,align=center,text width = 6.0cm, minimum height = \minheight, minimum width = 2*\boxdist + \boxwidth] (dist) at (\boxmid,0cm+\yi){$f:=d(x_0,\cdot), \quad \Deg(x_0)=D$};
		\node[draw,rectangle,align=center,text width = 6.0cm, minimum height = \y-\yi + \minheight + 2*\boxpt, minimum width = 2*\boxdist+\boxwidth+ 4*\boxpt] (distbigbox) at (\boxmid,\y/2 +\yi/2 - \boxpt){};
		\node[draw,rectangle,align=center, text width = 1.2cm, minimum height = \y-(\yi) + \minheight + 2*\boxpt, minimum width = \HSSwidth] (HSS) at (\boxmid + \boxdist+\boxwidth/2+ 2*\boxpt + \HSSwidth/2 + \boxbetween,\y/2 +\yi/2 - \boxpt){$HSS$};
		
		%\node[draw,rectangle,align=center, text width = 1.2cm, minimum height = \y- \yi + \minheight + 2*\boxpt, minimum width = \HSSwidth] (dist) at (\boxmid - \boxdist -\boxwidth/2- 2*\boxpt - \HSSwidth/2  -\boxdist +\boxwidth ,\y/2 + \yi/2 - \boxpt){${\lambda_{\deg_{\max}}}$ $= K$};
		
		\node[draw,rectangle,align=center, text width = 1.2cm, minimum height = \y- \yi + \minheight + 2*\boxpt, minimum width = \HSSwidth] (diam) at (\boxmid - \boxdist -\boxwidth/2- 2*\boxpt - \HSSwidth/2  -\boxdist +\boxwidth ,\yih - \boxpt){$d(x_0,y)$ $= \frac{2D}K$};
		
		%\node[draw,rectangle,align=center, text width = 1.2cm, minimum height = \y- \yi + \minheight + 2*\boxpt, minimum width = \HSSwidth] (diam) at (\boxmid - \boxdist -\boxwidth/2- 2*\boxpt - \HSSwidth/2  -\boxdist +\boxwidth ,\yih - \boxpt){$\|d(x_0,\cdot)\|$ $= \frac{2D}K$};

		%\node[draw,rectangle,align=center,text width = 2.0cm, minimum height = \minheight, minimum width = 18cm] (Pt) at (\boxmid,\y + \minheight){$CD(K,\infty)$};

		\node[draw,rectangle,align=center,text width = 2.0cm, minimum height = \minheight, minimum width = 2*(\withbig)] (kappa) at (\boxmid,\y - 2*\minheight - 2*\boxpt-4*\ybetween){$\kappa=const.$};

		\node[draw,rectangle,align=center,text width = 2.0cm, minimum height = 3*\minheight + 4*\ybetween+4*\boxpt, minimum width = 2*(\withbig)+4*\boxpt] (kappabigbox) at (\boxmid,\y - \minheight - 2*\ybetween -2*\boxpt){};
		
		\node[draw,rectangle,align=center,text width = 1.2cm, minimum height = 3*\minheight + 4*\ybetween+4*\boxpt, minimum width = \HSSwidth] (lambda) at (\boxmid - 1.5*\boxwidth - 1.5*\HSSwidth - 3*\boxdist + 3*\boxwidth - 4*\boxpt,\y - \minheight - 2*\ybetween -2*\boxpt){${\lambda_{\deg_{\max}}}$ $= K$};

		\node[draw,rectangle,align=center,text width = 5.0cm, minimum height = \minheight, minimum width = \HSSwidth + \boxdist-\boxwidth + 2*(\withbig)+4*\boxpt ] (CD) at (\boxmid- \boxwidth/2 - \boxdist/2 + \boxwidth/2 + 0.5*\boxwidth - 0.5*\HSSwidth,\y + \minheight){$CD(K,\infty)$ and $x_0 \in V$};

		\node[draw,rectangle,align=center,text width = 2.0cm, minimum height = 4*\minheight + 4*\ybetween + 8*\boxpt, minimum width = \HSSwidth + \boxdist-\boxwidth + 2*(\withbig)+8*\boxpt ] (CDbigbox) at (\boxmid- \boxwidth/2 - \boxdist/2 + \boxwidth/2 + 0.5*\boxwidth - 0.5*\HSSwidth,\y - \minheight/2 - 2*\ybetween - 2*\boxpt){};

		\node[draw,rectangle,align=center,text width = 2.0cm, minimum height = 4*\minheight + 4*\ybetween + 8*\boxpt, minimum width = \boxwidth ] (HC) at (\boxmid + 2*\boxwidth +3*\boxdist -3*\boxwidth + \HSSwidth + 6*\boxpt  ,\y - \minheight/2 - 2*\ybetween - 2*\boxpt){$\frac{2D}{K}$-dim. Hypercube with $\kappa = \frac K 2$};

		\draw[doubleRedArrow] (HC.west)  to node[above] {6}  (CDbigbox.east) ; 
		\draw[doubleRedArrow] (lambda.east)  to  node[above] {1}  (kappabigbox.west); 
		\draw[doubleRedArrow] (diam.east)  to node[above] {2}  (distbigbox.west); 
		\draw[doubleRedArrow] (HSS.west)  to  node[above] {5}  (distbigbox.east); 
		\draw[doubleRedArrow] (Pt.east)  to  node[above] {3}  (phi.west); 
		\draw[doubleRedArrow] (G2.west)  to  node[above] {4}  (phi.east);

		\end{tikzpicture}
	}
	\caption{The figure is a summary scheme of our results. The five equivalence arrows on the left only hold under the assumption of $CD(K,\infty)$. The box $HSS$ is an abbreviation for the hypercube shell structure $HSS(\frac{2D}K,\frac K 2, x_0)$ introduced in Definition~\ref{def:hypercube shell structure}.
		The edge degree $\kappa$ is defined in Definition~\ref{def:uniform edge degree}.
		%The box $f=\varphi+C$ means	that there exist a constant $C$ and an eigenfunction $\Delta \varphi=-K\varphi$ s.t. $f=\varphi+C$.	
		The leftmost equivalence arrow $\stackrel{1}{\Leftrightarrow}$ (for $\stackrel{1}{\Rightarrow}$, see Theorem~\ref{thm:Obata INTRO}) reads as: 'Assume $CD(K,\infty)$ and $x_0 \in V$. Then $\lambda_{\deg_{\max}}=K$ is equivalent to $\kappa=const$ and $d(x_0,y)=\frac{2D}K$ for some $y$.'
		Both equivalence arrows in the middle, $\stackrel{3}{\Leftrightarrow}$ and $\stackrel{4}{\Leftrightarrow}$ (see Theorem~\ref{thm:abstract CD sharpness INTRO}) , should be interpreted as follows.
		Assuming $CD(K,\infty)$, the equivalence between $\Gamma_2 f = K\Gamma f$ and $f=\varphi + C$ with $-\Delta \varphi = K \varphi$ and $\Gamma P_t f = e^{-2Kt} P_t \Gamma f$ holds for arbitrary $f$. In contrast, the equivalences $\stackrel{2}{\Leftrightarrow}$ and $\stackrel{5}{\Leftrightarrow}$ and (see Theorem~\ref{thm:weighted Cheng INTRO}) of, e.g., $HSS(\frac{2D}K,\frac K 2, x_0)$ and $\Gamma_2 f = K \Gamma f$ only hold for the special choice $f:=d(x_0,\cdot)$. There are subtle methods involved to prove $\stackrel{6}{\Leftrightarrow}$, therefore this equivalence arrow is not covered by a single theorem. 
	}
	\label{fig:results}
\end{figure}
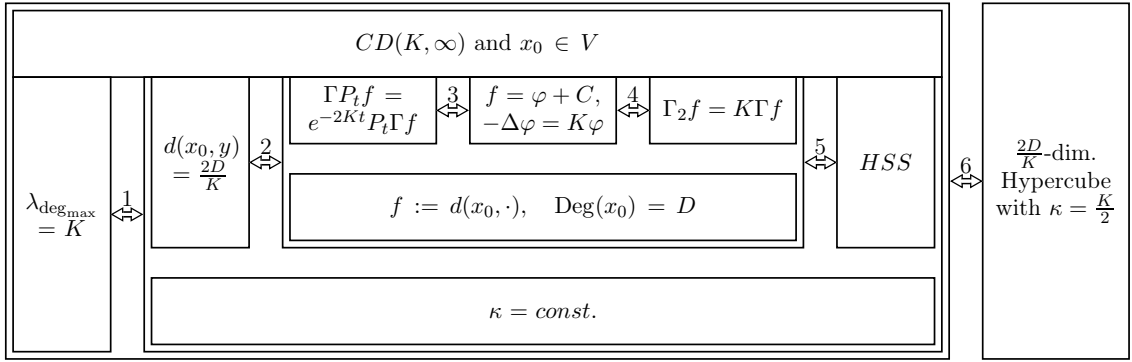

%A summary of our results is given in Figure~\ref{fig:results} below.

%\end{comment}

\section{Sharp curvature dimension inequality}\label{sec:sharp CD}
	This section is dedicated to prove Theorem~\ref{thm:abstract CD sharpness INTRO} which is the abstract characterization of $CD$-sharpness and Lemma~\ref{l:eigenvalue_sharp_CD}  which
	connects eigenvalue sharpness with $CD$ sharpness of the distance function and
	can be seen as the first part towards the proof of a discrete Obata theorem.
	The remaining parts to prove the Obata Theorem are provided in the sections below.
	The classical Obata rigidity theorem states that sharpness of Lichnerowicz eigenvalue bound is only attained for spheres.
	In the discrete setting, we prove that sharpness for the higher order eigenvalue bound is only attained for hypercubes, playing the role of a substitute for the sphere in the manifolds setting. We start giving the discrete Lichnerowicz eigenvalue bound (see \cite[Theorem~2.1]{bauer2014curvature} or  \cite[Theorem~1.6]{liu2015curvature}).

\begin{theorem}[Lichnerowicz eigenvalue bound]\label{thm:Lichnerowicz}
	Let $G=(V,E)$ be a graph satisfying $CD(K,\infty)$ for some $K>0$. Let $0=\lambda_0 \leq \lambda_1 \leq \ldots$ be the eigenvalues of $- \Delta$. Then, $\lambda_1 \geq K$.
\end{theorem}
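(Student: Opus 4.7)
The plan is to apply the curvature-dimension inequality to an eigenfunction associated with $\lambda_1$ and compare both sides after summing against the vertex measure. This is the standard Bakry-\'Emery argument adapted to the graph setting.

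First, I would pick a nonzero eigenfunction $f:V\to\IR$ satisfying $-\Delta f=\lambda_1 f$. Applying $CD(K,\infty)$ pointwise gives $\Gamma_2(f)(x)\geq K\Gamma(f)(x)$ for every $x\in V$, and integrating (i.e.\ summing with weights $m(x)$) yields
\[
\sum_{x\in V}m(x)\Gamma_2(f)(x)\;\geq\;K\sum_{x\in V}m(x)\Gamma(f)(x).
\]
The goal is then to evaluate both sides explicitly in terms of $\lambda_1$ and $\|f\|_2^2:=\sum_x m(x)f(x)^2$.

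Next I would use the discrete integration-by-parts identity
\[
\sum_{x\in V}m(x)\Gamma(g,h)(x)\;=\;-\sum_{x\in V}m(x)g(x)\Delta h(x),
\]
which follows immediately by summing the definition $2\Gamma(g,h)=\Delta(gh)-g\Delta h-h\Delta g$ together with the fact that $\sum_x m(x)\Delta u(x)=0$ for finitely supported $u$. Applied to the eigenfunction, this produces $\sum_x m(x)\Gamma(f)(x)=\lambda_1\|f\|_2^2$. An analogous computation starting from $2\Gamma_2(f)=\Delta\Gamma(f)-2\Gamma(f,\Delta f)$, together with $\Delta f=-\lambda_1 f$, gives
\[
\sum_{x\in V}m(x)\Gamma_2(f)(x)\;=\;-\sum_{x\in V}m(x)\Gamma(f,\Delta f)(x)\;=\;\lambda_1\sum_{x\in V}m(x)\Gamma(f)(x)\;=\;\lambda_1^2\|f\|_2^2.
\]
Substituting these two identities into the integrated $CD$-inequality yields $\lambda_1^2\|f\|_2^2\geq K\lambda_1\|f\|_2^2$, and dividing by $\lambda_1\|f\|_2^2>0$ gives the desired bound $\lambda_1\geq K$.

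The main obstacle is to make the integration-by-parts identities rigorous when $V$ is infinite, since the sum $\sum_x m(x)\Delta u(x)=0$ is not a priori absolutely convergent for general $u$. For finite $V$ (which is the relevant case when discrete eigenvalues enumerate the spectrum in the usual sense) there is nothing to check. In the general setting, one would restrict attention to $f$ in the $\ell^2(V,m)$-domain of a self-adjoint realisation of $\Delta$ (available since $\D<\infty$) and justify the two summations by either a truncation argument or the self-adjointness relations $\langle\Gamma(g,h),1\rangle=-\langle g,\Delta h\rangle$ and $\langle\Gamma_2(f),1\rangle=\langle\Delta f,\Delta f\rangle$ in $\ell^2(V,m)$; the algebraic computation then goes through unchanged.
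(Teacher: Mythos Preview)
Your argument is correct and is exactly the standard Bakry--\'Emery computation; the paper itself does not prove this theorem but simply cites \cite{bauer2014curvature,liu2015curvature}, and the very same integration-by-parts identity you use appears verbatim as \eqref{eq:sharp Q} in the proof of Theorem~\ref{thm:abstract CD sharpness}. Note also that under the paper's standing assumption $\D<\infty$, a connected graph satisfying $CD(K,\infty)$ with $K>0$ is automatically finite by the diameter bound (see the first line of the proof of Theorem~\ref{thm: multiplicity estimate}), so your discussion of the infinite case is not needed here.
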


\begin{example}\label{ex: sharp eigenvalue estimate}
	One is tempted to think that analogously to the Obata Sphere Theorem, sharpness of $\lambda_1 \geq K$ is only attained for hypercubes.
	But this is not true. We have the following counter examples.
	\begin{enumerate}
		\item 	Let $H_D$ the $D$-dimensional hypercube and let $G$ be a graph satisfying $CD(2,\infty)$. Then, the cartesian product $H_D \times G$ satisfies $CD(2,\infty)$ and has first non-zero eigenvalue $\lambda_1=2$.
		\item
		Let $G$ be a square with one diagonal. Then again, $G$ satisfies $CD(2,\infty)$ and has first non-zero eigenvalue $\lambda_1=2$.
	\end{enumerate}
\end{example}

Hence, we need stronger assumptions to characterize the hypercube. The idea in this article is to assume $\lambda_{\D}=K$ instead of the weaker condition $\lambda_1=K$.

\subsection{Geometric properties of eigenfunctions}
The goal of this subsection is to prove Theorem~\ref{thm:abstract CD sharpness INTRO} which is the abstract characterization of $CD$-sharpness.
The crucial step to do so is to show that the distance function to some fixed point, up to some constant, is an eigenfunction  to eigenvalue $K$.

The following lemma is crucial for the proof that
an eigenfunction to the eigenvalue $K$ is already uniquely determined
by its values on a one-ball (see Lemma~\ref{l:unique extension from B1} below).

\newcommand{\phif}{f}

\begin{lemma}\label{l:minimality}
	Let $G=(V,w,m)$ be a weighted
	graph, let $x,z \in X$ with $d(x,z)=2$ and let $ \phif:V \to \IR$.
	Suppose $$\frac{\phif(x) + \phif(z)}2 = \frac{\sum_{y:x\sim y\sim z} \phif(y) w(x,y)w(y,z)/m(y)}{\sum_{y:x\sim y\sim z} w(x,y)w(y,z)/m(y)}.$$
	Then for all $r\neq 0$, we have $\Gamma_2 \phif(x) < \Gamma_2 \left(\phif + r1_{\{z\}}\right)(x)$.
\end{lemma}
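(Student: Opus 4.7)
The plan is to view the map $r \mapsto \Gamma_2(f + r 1_{\{z\}})(x)$ as a real-valued polynomial in $r$ and to show that, under the hypothesis, it is a strictly convex quadratic with vertex at $r = 0$. Since $\Gamma_2 h(x)$ depends only on the values $h(v)$ for $v \in B_2(x)$, this map is polynomial in $r$, and a direct expansion will show that it has degree at most $2$.

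First I would isolate the dependence of $2\Gamma_2 h(x) = \Delta \Gamma h(x) - 2\Gamma(h,\Delta h)(x)$ on the single value $h(z)$. Because $d(x,z)=2$ and $z \not\sim x$, this value enters only through the common neighbors $y$ of $x$ and $z$: quadratically via the terms $(h(z)-h(y))^2$ inside $\Gamma h(y)$ (which in turn feed into $\Delta \Gamma h(x)$), and linearly via the terms $(h(z)-h(y))$ inside $\Delta h(y)$ (which feed into $\Gamma(h,\Delta h)(x)$). Substituting $h = f + r 1_{\{z\}}$ and applying the two elementary identities $(f(z)+r-f(y))^2 - (f(z)-f(y))^2 = 2r(f(z)-f(y)) + r^2$ and $(f(z)+r-f(y)) - (f(z)-f(y)) = r$, one arrives at
\begin{align*}
\Gamma_2(f + r 1_{\{z\}})(x) - \Gamma_2 f(x)
&= \frac{r^2}{4}\sum_{y:\,x\sim y\sim z}\frac{w(x,y)\,w(y,z)}{m(x)\,m(y)} \\
&\quad + \frac{r}{2}\sum_{y:\,x\sim y\sim z}\frac{w(x,y)\,w(y,z)}{m(x)\,m(y)}\bigl(f(x)+f(z)-2f(y)\bigr).
\end{align*}

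Second I would note that the coefficient of $r^2$ is strictly positive: since $d(x,z)=2$, the set of common neighbors of $x$ and $z$ is nonempty, and each summand is a product of positive edge weights and reciprocal vertex measures. Finally, the coefficient of $r$ vanishes exactly when the hypothesis of the lemma holds: multiplying the assumed identity by $\sum_{y}w(x,y)w(y,z)/m(y)$ and rearranging gives $\sum_{y}\frac{w(x,y)w(y,z)}{m(y)}\bigl(f(x)+f(z)-2f(y)\bigr)=0$, which is exactly the vanishing of the linear coefficient above up to the positive factor $1/(2m(x))$. Hence $r \mapsto \Gamma_2(f + r 1_{\{z\}})(x)$ is a strictly convex parabola whose vertex sits at $r=0$, and the strict inequality follows for every $r \neq 0$.

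The conceptual content is just that the averaging identity in the hypothesis is precisely the stationarity condition for $f(z)$ in the quadratic form $\Gamma_2 f(x)$, viewed as a function of the single variable $f(z)$; strict convexity in $f(z)$ is automatic because the quadratic coefficient is a sum of positive weights. The only real work is the careful bookkeeping of the factors $w(x,y)$, $w(y,z)$, $m(x)$, $m(y)$ in the expansion above, and I do not expect any genuine obstacle there.
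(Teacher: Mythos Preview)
Your proposal is correct and is essentially the same argument as the paper's: both expand $\Gamma_2(f+r1_{\{z\}})(x)$ as a quadratic in $r$, verify that the quadratic coefficient $\frac{1}{4m(x)}\sum_{y}\frac{w(x,y)w(y,z)}{m(y)}$ is strictly positive, and check that the linear coefficient vanishes precisely under the stated averaging hypothesis. The only cosmetic difference is that the paper packages this via the symmetric matrix $\Gamma_2(x)$ (citing \cite{cushing2016bakry} for its entries), whereas you compute the same coefficients directly from the definition $2\Gamma_2 h = \Delta\Gamma h - 2\Gamma(h,\Delta h)$; your version is thus slightly more self-contained.
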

%\textcolor{red}{
%Norbert, Shiping, is there a nice way to prove this using your paper with David Cushing?
%For me, it's sort of folklore but it would be rather lengthy to write down if I did.}
%\textcolor{blue}{
\begin{proof} Let $\phif_x\in \mathbb{R}^{\# B_2(x)}$ be the vector given by the restriction of the function $\phif$ on $B_2(x)$. Let $\Gamma_2(x)$ be the $(\# B_2(x)) \times (\# B_2(x))$ symmetric matrix such that $\Gamma_2 \phif(x)=\phif_x^\top \Gamma_2(x)\phif_x$. In fact, the column $\Gamma_2(x)1_{\{z\}}$ of $\Gamma_2(x)$ corresponding to a vertex $z\in S_2(x)$ is given as follows (see \cite[Section 2.3]{cushing2016bakry} and \cite[Section 12]{cushing2016bakry}): 
	 $$ (\Gamma_2(x))_{x,z} = (\Gamma_2(x))_{z,z} = \frac{1}{4 m(x)} \sum_{y: x \sim y \sim z} w(x,y) w(y,z) / m(y) > 0; $$
	 For any $y \in S_1(x), (\Gamma_2(x))_{y,z} = - \frac{1}{2 m(x)} w(x,y)w(y,z)/m(y)$ if $y \sim z$ and $0$ otherwise; Finally,
	 $(\Gamma_2(x))_{z',z}=0$ for any $z' \in S_2(x)$ different from $z$.
	 Therefore, we have
%	$(\Gamma_2(x))_{x,z}=\frac 1 4\sum_{y:x\sim y\sim z} w(x,y)w(y,z)/m(y)/m(x)$; For any $y\in S_1(x)$, $(\Gamma_2(x))_{y,z}=-\frac 1 2w(x,y)w(y,z)/m(y)/m(x)$ if $y\sim z$ and $0$ if otherwise $y\not\sim z$; $(\Gamma_2(x))_{z,z}=\frac 1 4\sum_{y:x\sim y\sim z} w(x,y)w(y,z)/m(y)/m(x)>0$ while $(\Gamma_2(x))_{z',z}=0$ for any $z'\in S_2(x)$ different from $z$. Therefore, we have
	$$\Gamma_2 \left(\phif + r1_{\{z\}}\right)(x)=\phif_x^\top \Gamma_2(x)\phif_x+2\phif_x^\top \Gamma_2(x)1_{\{z\}}+r^2(\Gamma_2(x))_{z,z}>\phif_x^\top \Gamma_2(x)\phif_x=\Gamma_2 \phif(x),$$
	since \begin{align*}
		\phif_x^\top \Gamma_2(x)1_{\{z\}}&=\frac 1 {4m(x)}(\phif(x)+\phif(z))\sum_{y:x\sim y\sim z} w(x,y)w(y,z)/m(y)
		\\&\quad -\frac 1 {2m(x)}\sum_{y:x\sim y\sim z}\phif(y)w(x,y)w(y,z)/m(y)\\&=0
	\end{align*} by assumption. This finishes the proof.
\end{proof}
%}

We denote the heat semigroup operator by
	$P_t = e^{t\Delta}$ (for details see, e.g., \cite{lin2015equivalent,liu2014eigenvalue} and
prove Theorem~\ref{thm:abstract CD sharpness INTRO} reappearing as the following theorem.

\begin{theorem}[Abstract $CD$-sharpness properties]\label{thm:abstract CD sharpness}
	Let $G=(V,w,m)$ be a connected graph
	with $\D < \infty$ and 
	satisfying $CD(K,\infty)$.
	%Suppose $\varphi$ to be an eigenfunction to the eigenvalue $K$.
	Let $f\in C(V) $ be a function.
	The following are equivalent.
	\begin{enumerate}
	\item $\Gamma P_t f = e^{-2Kt} P_t \Gamma f$.
	\item $\Gamma_2 f = K \Gamma f$.
	\item $f= \varphi +C$ for a constant $C$ and an eigenfunction $\varphi$ to the eigenvalue $K$ of $-\Delta$.
	\end{enumerate}
	If one of the above statements holds true, we moreover have
	\begin{enumerate}[(a)]
		\item $\Gamma f = const.$
		\item For all $x,z$ with $d(x,z)=2$, we have
		\begin{equation}\label{eq:eigenfct}
			\frac{f(z) + f(x)}2  = \frac{\sum_{y:x\sim y\sim z} f(y){w(x,y)w(y,z)/m(y)}}{{\sum_{y:x\sim y\sim z} w(x,y)w(y,z)/m(y)}}.
		\end{equation}
	\end{enumerate}
\end{theorem}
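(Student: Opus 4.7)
Plan:

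I would establish the cycle $(1)\Rightarrow(2)\Rightarrow(3)\Rightarrow(1)$, with the additional properties $(a)$ and $(b)$ emerging as by-products along the way. For $(1)\Rightarrow(2)$, I differentiate the identity $\Gamma P_t f = e^{-2Kt} P_t \Gamma f$ at $t = 0$: by bilinearity of $\Gamma$, the left-hand side contributes $2\Gamma(f,\Delta f)$, while the right-hand side contributes $\Delta\Gamma f - 2K\Gamma f$. Comparing these and invoking the defining identity $2\Gamma_2 f = \Delta\Gamma f - 2\Gamma(f,\Delta f)$ immediately yields $\Gamma_2 f = K\Gamma f$.

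For $(2)\Rightarrow(3)$, the key observation is that $CD(K,\infty)$ says $g \mapsto (\Gamma_2 - K\Gamma)g \geq 0$ pointwise for every $g$, so $(2)$ identifies $f$ as a pointwise minimizer. Expanding $g_\epsilon := f + \epsilon h$ in powers of $\epsilon$,
\begin{equation*}
(\Gamma_2 - K\Gamma)(g_\epsilon)(x) = 2\epsilon \bigl[\Gamma_2(f,h) - K\Gamma(f,h)\bigr](x) + \epsilon^2 \bigl[(\Gamma_2 - K\Gamma)(h)\bigr](x),
\end{equation*}
and non-negativity in $\epsilon$ forces the linear coefficient to vanish pointwise: $\Gamma_2(f,h)(x) = K\Gamma(f,h)(x)$ for every $x \in V$ and every $h:V\to\IR$. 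Specializing to $h = 1_{\{z\}}$ for $z \in S_2(x)$ and unfolding via the matrix entries of $\Gamma_2(x)$ recalled in the proof of Lemma~\ref{l:minimality} gives precisely the averaging identity $(b)$. To reach $(3)$, I integrate the bilinear identity against the measure $m$: using $\int \Gamma(u,v)\,dm = -\int u\Delta v\,dm$ and $2\Gamma_2(u,v) = \Delta\Gamma(u,v) - \Gamma(v,\Delta u) - \Gamma(u,\Delta v)$ together with self-adjointness of $\Delta$, one computes $\int \Gamma_2(f,h)\,dm = \int h\,\Delta^2 f\,dm$. Thus $\int h\,(\Delta^2 f + K\Delta f)\,dm = 0$ for every $h$, whence $\Delta^2 f + K\Delta f \equiv 0$. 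So $\Delta f + Kf$ is harmonic on the connected graph $G$ and therefore constant; writing $\Delta f + Kf = KC$, the function $\varphi := f - C$ satisfies $-\Delta\varphi = K\varphi$ and $f = \varphi + C$.

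For $(3)\Rightarrow(1)$ together with $(a)$, the eigenfunction property $-\Delta\varphi = K\varphi$ gives $P_t\varphi = e^{-Kt}\varphi$, so $P_t f = e^{-Kt}\varphi + C$ and $\Gamma P_t f = e^{-2Kt}\Gamma\varphi$. The right-hand side of $(1)$ equals $e^{-2Kt} P_t \Gamma\varphi$, so $(1)$ reduces to $\Delta\Gamma\varphi = 0$. The direct computation $2\Gamma_2\varphi = \Delta\Gamma\varphi + 2K\Gamma\varphi$, combined with $CD(K,\infty)$ applied to $\varphi$, gives $\Delta\Gamma\varphi \geq 0$, i.e.\ $\Gamma\varphi$ is subharmonic. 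The maximum principle on the connected graph $G$ then forces $\Gamma\varphi = \Gamma f$ to be constant, giving both $(a)$ and the identity required for $(1)$. The main technical care lies in the integration-by-parts step in $(2)\Rightarrow(3)$ and the maximum principle in $(3)\Rightarrow(1)$, both of which require either finiteness of $V$ or adequate summability of the functions involved.
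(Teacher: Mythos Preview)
Your argument is correct and complete, but it diverges from the paper's in two places worth noting.

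For $(1)\Rightarrow(2)$ you simply differentiate the identity at $t=0$; the paper instead runs the standard interpolation $F(s)=e^{-2Ks}P_s(\Gamma P_{t-s}f)$, computes $F'(s)=2e^{-2Ks}P_s(\Gamma_2 P_{t-s}f-K\Gamma P_{t-s}f)\ge 0$, and uses $F(t)=F(0)$ to force the integrand to vanish. Your route is shorter; the paper's route has the minor advantage of showing the identity propagates through the semigroup without needing differentiability at the endpoint.

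The more substantive difference is in $(2)\Rightarrow(3)$ and $(b)$. The paper treats these separately: for $(3)$ it integrates $\Gamma_2 f=K\Gamma f$ to obtain $\langle\Delta f,\Delta f\rangle=-K\langle f,\Delta f\rangle$, spectrally decomposes $f=\sum\alpha_i\varphi_i$, and invokes Lichnerowicz ($\lambda_i\ge K$ for $i\ge 1$) to conclude that only the modes with $\lambda_i\in\{0,K\}$ survive. For $(b)$ it argues by contradiction via Lemma~\ref{l:minimality}: if $(b)$ failed at some pair $x,z$, adjusting $f(z)$ would strictly decrease $\Gamma_2 f(x)$ while keeping $\Gamma f(x)$ fixed, violating $CD(K,\infty)$. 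Your first-variation argument unifies these two steps: from the pointwise vanishing of the quadratic form you extract the bilinear identity $\Gamma_2(f,h)=K\Gamma(f,h)$ for all $h$, and then $(b)$ drops out by taking $h=1_{\{z\}}$ (this is exactly the column computation inside Lemma~\ref{l:minimality}), while $(3)$ follows by integration against arbitrary $h$ to get $\Delta^2 f+K\Delta f=0$. Your route sidesteps both the spectral decomposition and the explicit appeal to Lichnerowicz; the paper's route makes the role of the spectral gap more transparent.

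For $(3)\Rightarrow(a)$ and $(3)\Rightarrow(1)$ the two proofs are essentially identical; the paper phrases the constancy of $\Gamma\varphi$ via $\langle\Delta\Gamma\varphi,1\rangle=0$ rather than the maximum principle, but on a finite connected graph these are equivalent.
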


\begin{proof}
	We start proving $(1)\Rightarrow (2)$.
	We set $F(s):=e^{-2Ks}P_s(\Gamma P_{t-s}f_0)(x_0)$. Observe that
		\begin{align*}
		F(0)=\Gamma P_t f_0(x_0)\,\,\text{ and }\,\,F(t)=e^{-2Kt} P_t \Gamma f_0(x_0).  %\label{eq:CD_sharp_x_0}
		\end{align*} 
		We compute
		\begin{align*}
		\frac{d}{ds}F(s)&=e^{-2Ks}\left[-2KP_s(\Gamma P_{t-s}f_0)(x_0)+P_s(\Delta \Gamma P_{t-s}f_0)(x_0)-P_s(2\Gamma(P_{t-s}f_0,\Delta P_{t-s}f_0))(x_0)\right]\\
		&=e^{-2Ks}P_s(2\Gamma_2P_{t-s}f_0-2 K\Gamma P_{t-s}f_0)(x_0).
		\end{align*}
		Due to assertion $1$ of the theorem and due to $CD(K,\infty)$, we obtain
		\begin{align*}
		0=F(t)-F(0)=\int_0^t\frac{d}{ds}F(s)ds=\int_0^t e^{-2Ks}P_s(2\Gamma_2P_{t-s}f_0-2 K\Gamma P_{t-s}f_0)(x_0) ds\geq 0.
		\end{align*}
		Hence,
		$e^{-2Ks}P_s(2\Gamma_2P_{t-s}f_0-2K\Gamma
		P_{t-s}f_0)(x_0)=0$ for all $s\in [0,t]$. In particular, this tells us that
		$P_t(2\Gamma_2 f_0-2K\Gamma f_0)(x_0)=0$.  
        Since $\Gamma_2f_0-2K\Gamma f_0 \ge 0$, we conclude
		that $\Gamma_2 f_0 \equiv K\Gamma f_0$ which proves assertion $(2)$.
	
	We prove $(2) \Rightarrow (3)$.
	Integrating yields
	\begin{align}\label{eq:sharp Q}
	-K\left\langle f_0, \Delta f_0 \right\rangle=K \left\langle \Gamma f_0,1 \right\rangle  = \left\langle \Gamma_2 f_0,1 \right\rangle =   -\left\langle \Gamma (f_0,\Delta f_0),1 \right\rangle = \left\langle \Delta f_0, \Delta f_0 \right\rangle
	\end{align}
	where
	$\left\langle f, g \right\rangle := \sum_x
	f(x)g(x)m(x)$.
	
	We spectrally decompose $f_0 = \sum \alpha_i\varphi_i$ where
	$\Delta \varphi_i = - \lambda_i \varphi_i$ with
		$\left\langle \varphi_i,\varphi_j \right\rangle = \delta_{ij}$
	and $0=\lambda_0<\lambda_1 \leq \cdots$.
	
	Then,
	$-K\left\langle f_0, \Delta f_0 \right\rangle = K\sum\ \lambda_i
	\alpha_i^2$ and
	$\left\langle \Delta f_0, \Delta f_0 \right\rangle = \sum\
	\lambda_i^2 \alpha_i^2$.
	
	Applying (\ref{eq:sharp Q}) yields
	$$
	0=\sum_i \lambda_i \left[\lambda_i - K \right] \alpha_i^2.
	$$
	
	Lichnerowicz yields $\lambda_1\geq K$ (see
	\cite[Theorem~1.6]{liu2015curvature}) and thus,
	$\lambda_i \left[\lambda_i - K \right] \geq 0$ for all $i\geq0$.
	%We infer $\lambda_1=K$ and $\alpha_i=0$ for $i\geq 2$.
	Therefore, all terms of $\sum_i \lambda_i \left[\lambda_i - K \right] \alpha_i^2$ need to zero which implies $\alpha_i=0$ whenever $\lambda_i \notin\{0,K\}$.
	Thus,
	we can write $f_0 = C + \varphi$ with $\Delta \varphi = -K\varphi$
	and constant $C$.
	
	%We prove $3 \Rightarrow 1.$

	We prove $(3) \Rightarrow (a)$ which will be used later to prove $(3) \Rightarrow (1)$.
	Due to $CD(K,\infty)$, we have
	\begin{align*}
		K\Gamma \varphi \leq  \frac 1 2 \Delta \Gamma \varphi - \Gamma(\varphi,\Delta \varphi) = \frac 1 2 \Delta \Gamma \varphi + K\Gamma \varphi.
	\end{align*}
	Thus, $\Delta \Gamma \varphi \geq 0$ which implies $\Delta \Gamma \varphi = 0$ since $\langle\Delta g,1 \rangle = 0$ for all functions $g:V\to \IR$.
	Since eigenvalue zero has multiplicity one due to connectedness, we see that $\Gamma \varphi = \Gamma f$ must stay constant.
	
	We now prove $(3) \Rightarrow (1)$.
	Since $\varphi$ is an eigenfunction, we have $P_t \varphi = e^{-Kt} \varphi$.
	Since $f$ and $\varphi$ only differ by a constant, we obtain
	\begin{align*}
	\Gamma P_t f = \Gamma P_t \varphi = \Gamma e^{-Kt} \varphi = e^{-2Kt} \Gamma \varphi. 
	\end{align*}
    We proved already $(3) \Rightarrow (a)$ which means that $\Gamma \varphi = const.$ and thus, $\Gamma \varphi = \Gamma f =  P_t \Gamma f$.
    We conclude
    \begin{align*}
    \Gamma P_t f = e^{-2Kt} \Gamma \varphi =  e^{-2Kt} P_t \Gamma f.
    \end{align*}
    
	We finally prove $(2)\Rightarrow (b)$.	
	We start with $\Gamma_2 f = K \Gamma f$. If there were $x,z \in V$ with
	$d(x,z) = 2$ and (\ref{eq:eigenfct}) violated, then we could change $f$ into $g$
	by changing it only in $z$ such that $g$ satisfies  (\ref{eq:eigenfct}) for the pair
	$x,z \in V$. Since $f$ and $g$ agree on $B_1(x)$, we have $\Gamma f(x) =
	\Gamma g(x)$ and $\Gamma_2 g(x) < \Gamma_2 f(x)$ due to Lemma~\ref{l:minimality}. Then we have
	$\Gamma_2 g(x) < \Gamma_2 f(x) = K \Gamma f(x) = K \Gamma g(x)$, violating the assumption that $G$ is $CD(K,\infty)$. 
\end{proof}

The next lemma states that if we know an eigenfunction on a one-ball, we know it everywhere.

\begin{lemma}\label{l:unique extension from B1}
	Let $G=(V,w,m)$ be a connected graph
		with $\D < \infty$ and 
	satisfying $CD(K,\infty)$. Let $x\in V$. Suppose $\varphi_1,\varphi_2$ are eigenfunctions to eigenvalue $K$. Suppose furthermore $\varphi_1|_{B_1(x)}=\varphi_2|_{B_1(x)}$. Then, $\varphi_1 \equiv \varphi_2$.
\end{lemma}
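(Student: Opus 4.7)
The plan is to study the difference $\varphi := \varphi_1 - \varphi_2$, which still satisfies $-\Delta \varphi = K\varphi$ (eigenfunctions to a fixed eigenvalue form a vector space) and vanishes identically on $B_1(x)$ by hypothesis. I will propagate this vanishing outward from $x$ shell by shell, using the local sharpness identity provided by Theorem~\ref{thm:abstract CD sharpness}\,(b), and then appeal to connectedness to conclude $\varphi \equiv 0$ on all of $V$.

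First I would apply Theorem~\ref{thm:abstract CD sharpness} to $\varphi$: condition (3) holds for $f = \varphi$ (trivially with constant $C = 0$), so conclusion (b) yields that for every pair $x', z \in V$ with $d(x',z) = 2$,
\begin{equation*}
\frac{\varphi(z) + \varphi(x')}{2} \;=\; \frac{\sum_{y:\, x' \sim y \sim z} \varphi(y)\, w(x',y) w(y,z)/m(y)}{\sum_{y:\, x' \sim y \sim z} w(x',y) w(y,z)/m(y)}.
\end{equation*}
The denominator is strictly positive whenever $d(x',z) = 2$, since then at least one common neighbour $y$ exists and all edge weights on the path $x' \sim y \sim z$ are positive.

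Next I would run an induction on $k \geq 1$ showing $\varphi|_{B_k(x)} \equiv 0$, with base case $k = 1$ given by hypothesis. For the inductive step, fix $z \in S_{k+1}(x)$ and select a neighbour $y_0 \in S_k(x)$ of $z$ (which exists, using $d(x,z) = k+1$), and then a neighbour $x' \in S_{k-1}(x)$ of $y_0$ (same reason, using $d(x,y_0) = k$). The triangle inequality forces $d(x',z) = 2$. For any intermediate vertex $y$ with $x' \sim y \sim z$ one has $d(y,x) \leq d(y,x') + d(x',x) = k$, so $y \in B_k(x)$ and thus $\varphi(y) = 0$ by the inductive hypothesis; likewise $\varphi(x') = 0$ since $x' \in S_{k-1}(x) \subset B_k(x)$. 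The displayed identity then collapses to $\varphi(z) = 0$, closing the induction.

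The one point requiring some care is the choice of $x'$: it must lie inside $B_k(x)$ and be at graph distance exactly $2$ from $z$, so that \emph{every} common neighbour of $x'$ and $z$ automatically lies in $B_k(x)$ where $\varphi$ is already known to vanish (rather than in $S_{k+1}(x)$, which is what we are currently trying to control). Pulling $x'$ all the way back to $S_{k-1}(x)$ along a shortest path from $y_0$ to $x$ is exactly what ensures this; once this is set up, the remainder is a direct substitution into the identity from Theorem~\ref{thm:abstract CD sharpness}\,(b).
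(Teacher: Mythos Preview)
Your proof is correct and follows the same approach as the paper: induction over spheres using the two-step identity from Theorem~\ref{thm:abstract CD sharpness}(b). The paper's own proof is much terser---it simply asserts that $\varphi_i(z)$ is determined by $\varphi_i|_{B_k(x)}$ via that identity---whereas you spell out the choice of $x'\in S_{k-1}(x)$ and verify that every common neighbour of $x'$ and $z$ lies in $B_k(x)$; working with the difference $\varphi_1-\varphi_2$ rather than each $\varphi_i$ separately is a cosmetic variation.
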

\begin{proof}
	We prove via induction over the spheres. Due to the above Theorem, $\varphi_i(z)$ is uniquely determined for $z \in S_{k+1}(x)$ whenever we know $\varphi_i(y)$ for all $y \in B_k$ with $k \geq 1$. In particular, $\varphi_1(z)=\varphi_2(z)$ for all $z \in S_{k+1}(x)$ if we assume $\varphi_1|_{B_k(x)} = \varphi_2|_{B_k(x)}$.
\end{proof}

The next lemma tells us that due to high multiplicity, for any given function, there exists an eigenfunction to eigenvalue $K$ which coincides with the given function locally.
		We recall that the combinatorial degree of a vertex $x\in V$  is given by $\deg(x) =\#\{y:y\sim x\}$. We write $\deg_{\max}:=\max_{x\in V}\deg(x)$.

	\begin{lemma}\label{l: One-Ball eigenfunction surjective}
		Let $G=(V,E)$ be a graph satisfying $CD(K,\infty)$ for some $K>0$. Let $x \in V$ and suppose $\lambda_{\deg(x)} = K$.
		Let $f:V \to \IR$ be a function with $\Delta f(x) = -Kf(x)$ at point $x$. Then, there exists an eigenfunction $\varphi$ to eigenvalue $K$ s.t. $\varphi|_{B_1(x)} = f|_{B_1(x)}$.
	\end{lemma}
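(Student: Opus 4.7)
The plan is a pure dimension count, combining Lichnerowicz with the injectivity principle established just before this lemma. First, I would combine the Lichnerowicz bound $\lambda_1\ge K$ from Theorem~\ref{thm:Lichnerowicz} with the hypothesis $\lambda_{\deg(x)}=K$ and monotonicity of the spectrum to conclude that
\[
\lambda_1 = \lambda_2 = \cdots = \lambda_{\deg(x)} = K,
\]
so the eigenspace $E_K := \ker(-\Delta - K\cdot \mathrm{Id})$ has dimension at least $\deg(x)$.

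Next, I would study the restriction map $R\colon E_K \to \IR^{B_1(x)}$, $\varphi\mapsto \varphi|_{B_1(x)}$. Since the value $\Delta\psi(x)$ depends only on $\psi|_{B_1(x)}$, the image of $R$ sits inside
\[
\mathcal{F} := \bigl\{g\colon B_1(x)\to\IR \,:\, \Delta g(x) = -Kg(x)\bigr\},
\]
which is cut out by a single linear equation on the $(\deg(x)+1)$-dimensional space $\IR^{B_1(x)}$. Because $K>0$ and the coefficient of $g(x)$ in $\Delta g(x) + Kg(x)$ is $K - \Deg(x)$ while the coefficients at the neighbours are nonzero, this linear equation is nontrivial, so $\dim\mathcal{F} = \deg(x)$.

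The crucial input is Lemma~\ref{l:unique extension from B1}, which asserts that a $K$-eigenfunction is determined by its values on $B_1(x)$; hence $R$ is injective. Therefore
\[
\dim R(E_K) \;=\; \dim E_K \;\ge\; \deg(x) \;=\; \dim \mathcal{F},
\]
which forces $R(E_K) = \mathcal{F}$, i.e.\ $R$ is surjective onto $\mathcal{F}$. The hypothesis $\Delta f(x) = -Kf(x)$ says exactly that $f|_{B_1(x)}\in\mathcal{F}$, and surjectivity then produces the desired $\varphi \in E_K$ with $\varphi|_{B_1(x)} = f|_{B_1(x)}$.

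I do not anticipate a serious obstacle. The argument is self-contained modulo the two inputs already in place (the Lichnerowicz theorem and the unique extension lemma). The only point requiring a little care is to verify that the linear constraint defining $\mathcal{F}$ is nontrivial, so that one really has $\dim \mathcal{F} = \deg(x)$ rather than $\deg(x)+1$; this is where the locality of $\Delta$ at $x$ and the positivity of $K$ enter.
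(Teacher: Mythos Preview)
Your proof is correct and follows essentially the same dimension-count argument as the paper: use Lichnerowicz together with $\lambda_{\deg(x)}=K$ to get $\dim E_K\ge\deg(x)$, observe that the restriction map $E_K\to\mathcal{F}$ is injective by Lemma~\ref{l:unique extension from B1}, and compare dimensions to force surjectivity. You are slightly more explicit than the paper in spelling out why the Lichnerowicz bound is needed and why the defining constraint of $\mathcal{F}$ is nontrivial (the paper simply asserts $\dim\Psi_{B_1(x)}=\deg(x)$), but the structure is identical.
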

	
	\begin{proof}
		This follows from a dimension argument.
		Let $\Phi:=\{\varphi:\Delta \varphi = - K \varphi\}$ be the eigenspace to the eigenvalue $K$. By assumption, $\dim \Phi \geq \deg(x)$.
		Let $\Phi|_{B_1(x)} := \{\varphi|_{B_1(x)}:\Delta \varphi = - K \varphi\}$ be the eigenspace restricted to $B_1(x)$.
		Due to Lemma~\ref{l:unique extension from B1}, the map
		$\Phi \to \Phi|_{B_1(x)}$ via $\varphi \mapsto \varphi|_{B_1(x)}$ is an injective linear transformation and thus, $\dim \Phi|_{B_1(x)} \geq \dim \Phi$.	
		Moreover, $\Phi|_{B_1(x)}$ is subspace of $\Psi_{B_1(x)}:=\{g: B_1(x)\to \IR: \Delta g(x) = Kg(x)\}$ which has dimension $\#B_1(x)-1 = \deg(x)$.
		We conclude
		$$\deg(x) \leq \dim \Phi \leq \dim \Phi|_{B_1(x)} \leq \dim \Psi|_{B_1(x)} = \deg(x).$$
		In particular, $\dim \Phi =\dim \Psi|_{B_1(x)}$ and hence,  the map $ \Phi \to \Psi_{B_1(x)}$ via $\varphi \mapsto \varphi|_{B_1(x)}$ is surjective since we already know injectivity.
		For given $f$ with $\Delta f(x) =-Kf(x)$, we have that $f|_{B_1(x)} \in \Psi|_{B_1(x)}$. Due to surjectivity discussed before, there is $\varphi \in \Phi$ satisfying $\varphi|_{B_1(x)} = f|_{B_1(x)}$ as desired.
	\end{proof}

We use the above lemma to prove that, assuming high multiplicity of eigenvalue $K$, one can conclude sharpness of the $CD(K,\infty)$ inequality for the distance function.
\begin{lemma}\label{l:eigenvalue_sharp_CD}
     	Let $G=(V,w,m)$ be a connected graph satisfying $CD(K,\infty)$ for some $K>0$.
		%Let $x \in V$ and suppose $\lambda_{\Deg(x)} = K$.
		%Let $x_0$ s.t. $D:=\Deg(x_0)=\D$. and suppose
		Let $x_0 \in V$.	
		Suppose $\lambda_{\deg_{\max}}=K.$
		%	\textcolor{blue}{
		Then, $\Gamma_2 f = K \Gamma f$ with $f= d(x_0,\cdot)$.
		%	}
\end{lemma}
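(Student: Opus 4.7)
The plan is to exhibit an eigenfunction $\varphi$ of $-\Delta$ to the eigenvalue $K$ and a constant $c$ such that $f = \varphi + c$ globally; the desired identity $\Gamma_2 f = K\Gamma f$ then follows immediately from the implication $(3) \Rightarrow (2)$ of Theorem~\ref{thm:abstract CD sharpness}.

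First I would verify that Lemma~\ref{l: One-Ball eigenfunction surjective} applies at $x_0$. Indeed, $CD(K,\infty)$ combined with Theorem~\ref{thm:Lichnerowicz} yields $K \leq \lambda_1 \leq \lambda_{\deg(x_0)} \leq \lambda_{\deg_{\max}} = K$, forcing $\lambda_{\deg(x_0)} = K$. Setting $c := \Deg(x_0)/K$, the function $g := f - c$ satisfies $\Delta g(x_0) = \Delta f(x_0) = \Deg(x_0) = Kc = -Kg(x_0)$, so by Lemma~\ref{l: One-Ball eigenfunction surjective} there is an eigenfunction $\varphi$ to eigenvalue $K$ of $-\Delta$ with $\varphi|_{B_1(x_0)} = g|_{B_1(x_0)}$.

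The main step is to prove, by induction on $k \geq 0$, that $\varphi(z) = f(z) - c$ for every $z \in S_k(x_0)$. The cases $k = 0, 1$ hold by the choice of $\varphi$. For the inductive step with $k \geq 2$, fix $z \in S_k(x_0)$, take a shortest path $x_0 = v_0, v_1, \ldots, v_k = z$, and set $x := v_{k-2}$, so that $d(x,z) = 2$. Since $\varphi$ is an eigenfunction to $K$, part (b) of Theorem~\ref{thm:abstract CD sharpness} gives equation~(\ref{eq:eigenfct}) for $\varphi$ at the pair $(x,z)$. By the triangle inequality, any common neighbor $y \in S_1(x) \cap S_1(z)$ must satisfy $d(x_0, y) \in \{k-3, k-2, k-1\} \cap \{k-1, k, k+1\} = \{k-1\}$, so $y \in S_{k-1}(x_0)$. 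By the inductive hypothesis, $\varphi(y) = (k-1) - c$ for every such $y$, while $\varphi(x) = (k-2) - c$. Since all the averaged values coincide, the weighted right-hand side of (\ref{eq:eigenfct}) collapses to $(k-1) - c$ regardless of the explicit weights, and solving yields $\varphi(z) = 2\bigl((k-1) - c\bigr) - \bigl((k-2)-c\bigr) = k - c = f(z) - c$.

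We conclude $f = \varphi + c$ globally, and Theorem~\ref{thm:abstract CD sharpness} then gives $\Gamma_2 f = K\Gamma f$. The principal obstacle to a more direct approach is that $\Gamma_2 f(x)$ depends on the values of $f$ on the full two-ball around $x$, so knowing that $\varphi$ agrees with $f-c$ only on $B_1(x_0)$ does not by itself yield $CD$-sharpness at other vertices; the equality must be propagated outward. The crux is precisely the uniformity of the values of $\varphi$ on the common-neighbor set $S_1(x) \cap S_1(z)$ in the inductive step, which alone allows the weighted average in~(\ref{eq:eigenfct}), with otherwise unknown weights, to be evaluated explicitly.
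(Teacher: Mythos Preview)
Your proof is correct and follows essentially the same approach as the paper: produce an eigenfunction $\varphi$ agreeing with $f-c$ on $B_1(x_0)$ via Lemma~\ref{l: One-Ball eigenfunction surjective}, then propagate the equality outward by induction using the two-step relation~(\ref{eq:eigenfct}) from Theorem~\ref{thm:abstract CD sharpness}(b). Your write-up is in fact slightly more careful than the paper's, since you explicitly verify $\lambda_{\deg(x_0)}=K$ and take $c=\Deg(x_0)/K$ rather than $\D/K$, which is needed for the lemma as stated with arbitrary $x_0$.
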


\begin{proof}
		Let $\psi:V\to \IR$ be given by $\psi(y):= d(x_0,y) - D/K$.
		Then, $-\Delta \psi(x_0) = D = K\psi(x_0)$.
		Hence by Lemma~\ref{l: One-Ball eigenfunction surjective}, there is an eigenfunction $\varphi$ to eigenvalue $K$ s.t. $\varphi|_{B_1(x_0)} = \psi|_{B_1(x_0)}$.
		Due to Theorem~\ref{thm:abstract CD sharpness}, we have
		$$ \varphi(z) = - \varphi(x) + 2\frac{\sum_{y:x\sim y\sim z} \varphi(y)w(x,y)w(y,z)/m(y)}{\sum_{y:x\sim y\sim z} w(x,y)w(y,z)/m(y)}.$$
		for all $x,z$ with $d(x,z)=2$.
		Since the same equation holds for $\psi$ whenever $d(z,x_0) = 2+d(x,x_0)$, we conclude $\varphi = \psi$.
		Since $\Gamma_2$ and $\Gamma$ are invariant under adding constants and due to Theorem~\ref{thm:abstract CD sharpness}, this implies $\Gamma_2 d(x_0,\cdot)=\Gamma_2 \psi = K\Gamma \psi = K\Gamma d(x_0,\cdot)$.
		This finishes the proof.
\end{proof}

\subsection{An upper bound for the multiplicity of the eigenvalue $K$}
The methods above have shown that eigenfunctions to the eigenvalue $K$ are already uniquely determined by its values on a one-ball.
We will use a simple dimension argument to obtain an upper bound for the multiplicity of the eigenvalue $K$
\begin{theorem}\label{thm: multiplicity estimate}
 Let $G = (V,w,m)$ be a connected graph with $\D < \infty$ and satisfying $CD(K,\infty)$ for some $K > 0$. Then we have $\lambda_1 \geq K$ and, if $K$ is an eigenvalue of $-\Delta$, then its multiplicity is at most $\min_{x \in V} {\rm{deg}}(x)$. 
\end{theorem}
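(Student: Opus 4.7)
The plan is to split the statement into its two parts. The inequality $\lambda_1 \geq K$ is just the Lichnerowicz eigenvalue bound (Theorem~\ref{thm:Lichnerowicz}), which is already cited in the excerpt, so I would simply invoke that. The real content is the multiplicity estimate, and for this the machinery developed in the previous subsection is already essentially sufficient.

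The idea is a pure dimension argument, exactly in the spirit of the proof of Lemma~\ref{l: One-Ball eigenfunction surjective}. Fix any vertex $x \in V$ and let $\Phi := \{\varphi : -\Delta \varphi = K \varphi\}$ denote the $K$-eigenspace of $-\Delta$, whose dimension is the multiplicity we wish to bound. Consider the restriction map
\[
r_x : \Phi \longrightarrow \Psi_{B_1(x)} := \{ g : B_1(x) \to \IR \mid \Delta g(x) = -K g(x)\}, \qquad \varphi \mapsto \varphi|_{B_1(x)}.
\]
This is clearly linear, and by Lemma~\ref{l:unique extension from B1} it is injective: any $K$-eigenfunction is determined globally by its values on $B_1(x)$. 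Hence $\dim \Phi \leq \dim \Psi_{B_1(x)}$.

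The space $\Psi_{B_1(x)}$ consists of functions on the $\deg(x)+1$ vertices of $B_1(x)$ subject to the single non-trivial linear equation $\Delta g(x) = -K g(x)$, so it is the kernel of a non-zero linear functional on $\IR^{\#B_1(x)}$ and has dimension exactly $\#B_1(x) - 1 = \deg(x)$. Therefore $\dim \Phi \leq \deg(x)$. Since $x$ was arbitrary, taking the infimum over $x$ yields $\dim \Phi \leq \min_{x \in V} \deg(x)$, as desired.

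I do not expect a real obstacle: the two ingredients (Lichnerowicz and the injectivity of restriction to $B_1(x)$) are already in hand, and the dimension count is elementary. The only thing to be slightly careful about is checking that the linear constraint defining $\Psi_{B_1(x)}$ is genuinely non-trivial, which holds since the coefficient of $g(y)$ for any $y \sim x$ in $\Delta g(x)$ is $w(x,y)/m(x) > 0$.
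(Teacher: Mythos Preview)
Your proposal is correct and follows essentially the same approach as the paper: invoke Lichnerowicz for $\lambda_1 \geq K$, then use Lemma~\ref{l:unique extension from B1} to see that the restriction $\Phi \to \IR^{B_1(x)}$ is injective into a subspace of dimension $\deg(x)$. The only cosmetic differences are that the paper first notes $G$ is finite (via the diameter bound) and bounds $\dim \Phi|_{B_1(x)}$ by observing it contains no nonzero constant vector, whereas you use the equivalent formulation via the affine constraint $\Delta g(x) = -Kg(x)$; both yield $\deg(x)$.
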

\begin{proof}
We first observe that $G$ is finite due to the diameter bound (see \cite[Corollary~2.2]{liu2016bakry}).
The inequality $\lambda_1 \geq K$ follows from Lichnerowicz inequality (see \cite[Theorem~2.1]{bauer2014curvature} or  \cite[Theorem~1.6]{liu2015curvature}).

We now prove the upper bound of the multiplicity.
Choose a $1$-ball $B_1(x)$. Let $x \in V$ for which we have ${\rm{deg}}(x) = \min_{y \in V} {\rm{deg}}(y)$. Due to Lemma~\ref{l:unique extension from B1}, the eigenfunctions to the eigenvalue $K$
are uniquely determined by the values on $B_1(x)$. Using the subspace $\Phi \vert_{B_1(x)}$ introduced in the proof of Lemma~\ref{l: One-Ball eigenfunction surjective}, we know its dimension is equal to the multiplicity of the eigenvalue $K$. 
On the other hand, we have $\Phi \vert_{B_1(x)} \subseteq \IR^{B_1(x)}$
and $\Phi \vert_{B_1(x)}$ does not contain any constant vectors. Therefore, this vector space must have dimension at most ${\#B_1(x) - 1 = \rm{deg}}(x)$. This finishes the proof.
\end{proof}

\begin{rem}
We will show in Sections~\ref{sec:combinatorial approach} and \ref{sec: combinatorial Char} that multiplicity equals ${\rm{deg}}_{\max}$  implies that $G$ is the $D$-dimensional hypercube. It is an interesting question whether, for given $1 \le k < {\rm{deg}}_{\max}$, there is also a characterization of all connected graphs with $\D<\infty$ and satisfying $CD(K,\infty)$. 
\end{rem}

\section{Sharp curvature estimates and the distance function}\label{sec:Sharp diameter bounds}
	This section is dedicated to prove both, Theorem~\ref{thm:weighted Cheng INTRO} which can be seen as part of a discrete Cheng theorem, and Theorem~\ref{thm:Obata INTRO} which can be seen as part of a discrete Obata theorem, presenting diameter or eigenvalue conditions which lead to the same shell structure as the hypercube. Moreover, we explain the necessity of the assumption of an constant edge degree for our discrete Cheng theorem in section~\ref{sec:Examples}.
Semigroup methods allow us to investigate the behavior of the distance
function $f_0 = d(x_0,\cdot)$. In particular, we will be able to
recover coarse sphere structures from diameter sharpness,
%sharpness of Bonnet-Myers,
i.e., the size of every sphere and the in- and outgoing degrees of the
vertices. In other words, we will know for every vertex to how many
vertices in the next sphere it is connected, but we do not know to
which ones. 
So in order to establish the full discrete versions of the Cheng and Obata theorems,
we will need further investigations carried out in Sections~\ref{sec:combinatorial approach} and \ref{sec: combinatorial Char} and to prove Theorem~\ref{thm:SSL NCP hypercube INTRO}.
\subsection{Diameter sharpness}
We now study sharpness of the diameter bound obtained in
\cite[Corollary~2.2]{liu2016bakry} via semigroup methods.
The following theorem is the restatement of Theorem~\ref{thm:weighted Cheng INTRO}.

	\begin{theorem}[Diameter sharpness for weighted graphs]\label{thm:weighted Cheng}
		Let $G=(V,w,m)$ be a connected graph satisfying $CD(K,\infty)$ for some $K>0$. Let $x_0 \in V$ and let $f_0 := d(x_0,\cdot)$. Suppose $D:=\D<\infty$. The following are equivalent:
		\begin{enumerate}
			\item There exists $y_0\in V$ s.t. $d(x_0,y_0) = \frac {2D}K$.
			\item $\Deg(x_0)=D$ and $\Gamma P_t f_0 = e^{-2Kt} P_t \Gamma f_0$.
			\item $\Deg(x_0)=D$ and $\Gamma_2 f_0 = K \Gamma f_0$.
			\item $\Deg(x_0)=D$ and $f_0= \varphi +C$ for a constant $C$ and an eigenfunction $\varphi$ to the eigenvalue $K$ of $-\Delta$.
			\item $G$ has the  hypercube shell structure $HSS\left(\frac{2D}K,\frac K 2,x_0 \right)$.
		\end{enumerate}
	\end{theorem}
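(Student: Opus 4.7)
The strategy is a cyclic chain $(1)\Rightarrow(2)\Rightarrow(3)\Rightarrow(4)\Rightarrow(5)\Rightarrow(1)$. The middle equivalences $(2)\Leftrightarrow(3)\Leftrightarrow(4)$ are immediate: they are exactly the three abstract $CD$-sharpness properties of Theorem~\ref{thm:abstract CD sharpness} applied to the distance function $f = f_0$, and the extra hypothesis $\Deg(x_0)=D$ is common to all three. So the real work sits in the three implications $(1)\Rightarrow(2)$, $(4)\Rightarrow(5)$, and $(5)\Rightarrow(1)$.

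For $(1)\Rightarrow(2)$, the plan is to revisit the semigroup proof of the diameter bound $\diam(G)\le 2\D/K$ from \cite[Corollary~2.2]{liu2016bakry}. That argument feeds the pointwise estimate $\Gamma f_0 \le \Deg/2 \le D/2$ into the gradient-decay inequality $\Gamma P_t f_0 \le e^{-2Kt} P_t \Gamma f_0$ (a consequence of $CD(K,\infty)$) in order to control $|P_t f_0(y) - P_t f_0(x_0)|$ and ultimately $d(x_0,y)=f_0(y)$. If some $y_0$ saturates the bound at $2D/K$, every inequality in that chain must be tight at the relevant base points; the pointwise tightness $\Gamma f_0(x_0)=\Deg(x_0)/2=D/2$ forces $\Deg(x_0)=D$, and the saturation of the semigroup estimate forces $\Gamma P_t f_0 \equiv e^{-2Kt} P_t \Gamma f_0$, which is precisely $(2)$.

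For $(4)\Rightarrow(5)$, write $f_0=\varphi+C$ with $-\Delta\varphi=K\varphi$, so that $-\Delta f_0 = K(f_0-C)$. Splitting $\Delta f_0(x)$ according to whether a neighbor of $x$ lies in a nearer, equidistant, or farther sphere around $x_0$ gives $-\Delta f_0(x)=d_-^{x_0}(x)-d_+^{x_0}(x)$, and evaluating at $x=x_0$ pins down $KC=D$. Hence
\begin{equation*}
 d_-^{x_0}(x) - d_+^{x_0}(x) = K\,d(x_0,x) - D \qquad \text{for all } x\in V.
\end{equation*}
By Theorem~\ref{thm:abstract CD sharpness}(a), $\Gamma f_0$ is constant, and since $2\Gamma f_0(x_0)=\Deg(x_0)=D$, we deduce $d_-^{x_0}(x)+d_+^{x_0}(x)=D$ for every $x$. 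Solving these two linear relations yields the key HSS identity $d_-^{x_0}(x)=\tfrac{K}{2}\,d(x_0,x)$ with $W=K/2$. Finally, the inequality $\Deg(x)\ge d_-^{x_0}(x)+d_+^{x_0}(x)=D=\D$ is saturated, so $\Deg\equiv D$ and no within-sphere edges exist, delivering both bipartiteness and the identity $\Deg\equiv NW$ with $N=2D/K$. All three defining conditions of $HSS(2D/K,K/2,x_0)$ are thus verified.

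For $(5)\Rightarrow(1)$, apply the volume-growth proposition following Definition~\ref{def:hypercube shell structure}: $m(S_n(x_0))=m(x_0)\binom{N}{n}$ with $N=2D/K$, and its recursion $m(S_{n+1})/m(S_n)=(N-n)/(n+1)$ would produce a negative volume if $N$ were not a non-negative integer. Hence $N\in\IN$ and $m(S_N(x_0))=m(x_0)>0$, so there exists $y_0\in V$ with $d(x_0,y_0)=N=2D/K$, closing the cycle. I expect the main obstacle to be the saturation analysis in $(1)\Rightarrow(2)$: one must carefully trace the semigroup derivation of the diameter bound and identify which inequalities become tight globally (not just at $x_0$ or $y_0$), so as to upgrade pointwise sharpness at a single vertex into the identity $\Gamma P_t f_0 \equiv e^{-2Kt} P_t \Gamma f_0$ on all of $V$. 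The other implications reduce either to a direct invocation of Theorem~\ref{thm:abstract CD sharpness} or to elementary linear-algebra bookkeeping on the shell decomposition of $\Delta f_0$ and $\Gamma f_0$.
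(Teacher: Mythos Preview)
Your proposal is correct and follows essentially the same route as the paper: the cycle $(1)\Rightarrow(2)\Leftrightarrow(3)\Leftrightarrow(4)\Rightarrow(5)\Rightarrow(1)$, with the saturation analysis of the semigroup diameter argument for $(1)\Rightarrow(2)$, the invocation of Theorem~\ref{thm:abstract CD sharpness} for the middle equivalences, and the $\Gamma f_0=\text{const}$ bookkeeping for $(4)\Rightarrow(5)$. The only cosmetic difference is in $(5)\Rightarrow(1)$: the paper argues directly that $d_+^{x_0}(x)=D-\tfrac K2 d(x,x_0)>0$ whenever $d(x,x_0)<2D/K$, so one can keep stepping outward until reaching distance $2D/K$; your route through the volume-growth proposition works too but is slightly more roundabout.
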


In Corollary~\ref{cor:HSS but no hypercube}, we will give an example of graphs apart from the hypercube which satisfy the assertions of the theorem.
Before proving the theorem, we construct an example with the hypercube shell structure which does not have any positive curvature bound.
\begin{example}[Hypercube shell structure and non-positive curvature]\label{Ex:HSS no CD}
	The unweighted graph given in Figure~\ref{fig:HSSnegCurv} obviously satisfies $HSS(4,1,x)$.
	However, the punctured two-ball $\mathring{B}_2(x)$ is not connected, and due to \cite[Theorem~6.4]{cushing2016bakry}, this implies that $CD(0,\infty)$ is not satisfied at vertex $x$.

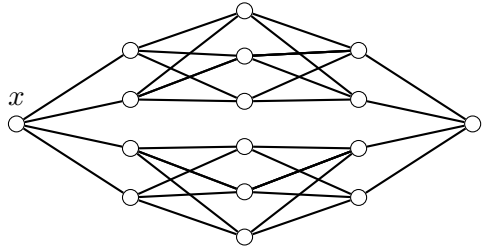
\begin{figure}[h]
	\centering
	\begin{tikzpicture}[scale=0.25]
	\vertex[label=$x$](x0) at (-6,0) {};
	\vertex(y1) at (0,3.9) {};
	\vertex(y2) at (0,1.3) {};
	\vertex(y3) at (0,-1.3) {};
	\vertex(y4) at (0,-3.9) {};
	\vertex(z1) at (6,6) {};
	\vertex(z2) at (6,3.6) {};
	\vertex(z3) at (6,1.2) {};
	\vertex(z4) at (6,-1.2) {};
	\vertex(z5) at (6,-3.6) {};
	\vertex(z6) at (6,-6) {};
	
		\vertex[label=](w0) at (18,0) {};
		\vertex(v1) at (12,3.9) {};
		\vertex(v2) at (12,1.3) {};
		\vertex(v3) at (12,-1.3) {};
		\vertex(v4) at (12,-3.9) {};

	%\tikzset{EdgeStyle/.style={->}}
	\Edge(x0)(y1)
	\Edge(x0)(y2)
	\Edge(x0)(y3)
	\Edge(x0)(y4)
	\Edge(z3)(y1)
	\Edge(z3)(y2)
	\Edge(z5)(y3)	
	\Edge(z6)(y3)	
%	\Edge(z3)(y3)
%	\Edge(z3)(y4)
%	\Edge(z4)(y1)
%	\Edge(z4)(y2)
	\Edge(z4)(y3)
	\Edge(z4)(y4)
	\Edge(z1)(y1)
	\Edge(z2)(y2)
	\Edge(z2)(y2)	
	\Edge(z5)(y3)
	\Edge(z5)(y4)	
	\Edge(z6)(y4)	
	\Edge(z1)(y2)				
	\Edge(z2)(y1)

	\Edge(w0)(v1)
	\Edge(w0)(v2)
	\Edge(w0)(v3)
	\Edge(w0)(v4)
	\Edge(z3)(v1)
	\Edge(z3)(v2)
	\Edge(z5)(v3)	
	\Edge(z6)(v3)	
	%	\Edge(z3)(v3)
	%	\Edge(z3)(v4)
	%	\Edge(z4)(v1)
	%	\Edge(z4)(v2)
	\Edge(z4)(v3)
	\Edge(z4)(v4)
	\Edge(z1)(v1)
	\Edge(z2)(v2)
	\Edge(z2)(v1)	
	\Edge(z5)(v3)
	\Edge(z5)(v3)	
	\Edge(z6)(v4)	
	\Edge(z1)(v2)	
	\Edge(z5)(v4)				
	\Edge(z2)(v1)	
	\Edge(z2)(v1)		
	\end{tikzpicture}
	\caption{The graph satisfies $HSS(4,1,x)$ but it has negative Bakry-\'Emery curvature at vertex $x$.}\label{fig:HSSnegCurv}
\end{figure}
\end{example}
\begin{proof}[Proof of Theorem~\ref{thm:weighted Cheng}]
	
	We recall that the hypercube shell structure $HSS\left(\frac {2D}K,\frac K 2,x_0 \right)$ means
	\begin{enumerate}
		\item [a)] $G$ is $D$-regular w.r.t $\Deg$ defined in (\ref{eq:defDeg}),
		\item [b)] $G$ is bipartite,
		\item [c)] $d_-^{x_0}(x) = \frac K 2 d(x,x_0)$ for all $x \in V$.
	\end{enumerate}

	First, we prove $1 \Rightarrow 2$.
	{We remark that $G$ is finite due to finite (combinatorial)
		diameter  (\cite[Corollary~2.2]{liu2016bakry}), bounded above by $\frac{2D} K$.
		Let $f_0(\cdot):=d(x_0,\cdot): V\to \mathbb{R}$.}
	Similar to the proof of \cite[Theorem~2.1]{liu2016bakry}, we have $|\Delta g| \leq \sqrt{2 \D \Gamma g}$ and therefore,
	\begin{align}
	\frac{2 \D} K &= \diam_d(G)   =f_0(y_0) - f_0(x_0)
	\leq \int_0^\infty |\Delta P_t f_0(x_0)|  + |\Delta P_t f_0(y_0)|dt \nonumber\\
	&\leq \int_0^\infty \sqrt{2 \D \Gamma P_t f_0(x_0)}  + \sqrt{2 \D \Gamma P_t f_0(y_0)} dt \label{eq:Deg sharp}\\
	&\stackrel{CD(K,\infty)}{\leq} \sqrt{2\D} \int_0^\infty e^{-Kt} \left(\sqrt{P_t \Gamma f_0(x_0)} +\sqrt{ P_t \Gamma f_0(y_0)}\right) dt \label{eq:eKT}\\
	&\leq \sqrt{2\D} \int_0^\infty e^{-Kt}\cdot 2\sqrt{\|\Gamma f_0\|_\infty} dt\nonumber\\
	&\leq \sqrt{2\D} \int_0^\infty e^{-Kt}\cdot 2\sqrt{\frac{\D}{2}} dt \nonumber\\
	&=\frac{2 \D} K \nonumber.
	\end{align}
	Hence, we have equality in every step of the calculation.
	Due to sharpness of (\ref{eq:eKT}), we have
	\begin{align*}
	\Gamma P_t f_0(x_0) = e^{-2Kt} P_t \Gamma f_0(x_0) %\label{eq:CD sharp x0}
	\end{align*}
	for all $t \geq 0$. 
	Due to sharpness of (\ref{eq:Deg sharp}), we have $\Deg(x_0)=\D=D$ which proves assertion $2$ of the theorem.
	
	The equivalence of statements $2$, $3$, and $4$ of the theorem follows from Theorem~\ref{thm:abstract CD sharpness}. 
	
	We prove $4 \Rightarrow 5$.
	 We first prove $D$-regularity and bipartiteness.	
	By Theorem~\ref{thm:abstract CD sharpness}(a), we have for all $x \in V$ that $\Gamma f_0 (x) = const. = \Gamma f_0(x_0) = D/2$.
	Hence
	for all $x \in V$, 
		\begin{align}
		D=\D = 2\Gamma f_0(x) &= \sum_{\stackrel{y\sim x}{f_0(y) \neq f_0(x)}} \frac{w(x,y)}{m(x)}  \nonumber\\&= \Deg(x) - \sum_{\stackrel{y\sim x}{f_0(y) = f_0(x)}} \frac{w(x,y)}{m(x)}. \label{eq:D slash D}
		\end{align} 
	Since we always have $\Deg(x) \leq \D$, equation (\ref{eq:D slash
		D}) implies $\Deg(x)=\D$ and there is no $y \sim x$ with
	$f_0(x) = f_0(y)$, i.e. there are no edges within the spheres
	$S_k{(x_0)}$.  This proves $D$-regularity and bipartiteness since bipartiteness is
	equivalent to having no edges within the spheres around a fixed
	vertex.
	
	We calculate how $f_0$ decomposes into an eigenfunction $\varphi$ and a constant $C$.
	We have
	\begin{align}
	D=\Delta f_0(x_0) = \Delta \varphi (x_0) = -K\varphi(x_0).
	\end{align}
	Thus, $C=f_0(x_0) - \varphi(x_0) = D/K$ which implies
	$\Delta f_0 = \Delta \varphi = -K\varphi = D - Kf_0$.

	Due to $D$-regularity and bipartiteness, we have $d_-^{x_0}(x) + d_+^{x_0}(x)=D$ for all $x \in V$. On the other hand since $\Delta f_0 = D - Kf_0$, we obtain
	\begin{align*}
	d_+^{x_0}(x) - d_-^{x_0}(x) = \Delta f_0(x) =  D-Kd(x,x_0).
	\end{align*}
	Subtracting the latter equation from $d_-^{x_0}(x) + d_+^{x_0}(x)=D$ yields $2d_-^{x_0}(x) = Kd(x,x_0)$ which proves c) of the hypercube shell 	   structure and thus assertion 5 of the theorem.
	
	We continue proving $5 \Rightarrow 1$.
	Due to $HSS$, we have
	$d_+^{x_0}(x)=D-d_-^{x_0}(x)= D- \frac K 2 d(x,x_0) >0$ whenever $d(x,x_0)< \frac {2D}K$.
	Hence, there exists $y \in V$ with $d(y,x_0) > d(x,x_0)$ as soon as $d(x,x_0)< \frac {2D}K$. By induction principle there exists $y_0\in V$ s.t. $d(x_0,y_0)=\frac{2D}K$ which proves assertion 1 of the theorem.	
\end{proof}

\subsection{Eigenvalue sharpness}\label{sec:_eigenvalue_sharpness}

In this subsection, we prove Theorem~\ref{thm:Obata} which states that
sharpness of the Lichnerowicz eigenvalue estime or the first $\deg_{\max}$
non-trivial eigenvalues implies the hypercube shell structure and constant
edge degree. For the definition of constant edge degree, see
Definition~\ref{def:uniform edge degree}.

We now restate Theorem~\ref{thm:Obata INTRO} for convenience and
provide the proof.

\begin{theorem}[Eigenvalue sharpness]\label{thm:Obata}
  Let $G=(V,w,m)$  be a  connected graph  with $\Deg_{\max}  < \infty$
  satisfying $CD(K,\infty)$ for some $K>0$.
  % Let $x \in V$ and suppose $\lambda_{\Deg(x)} = K$.
  % Let $x_0$ s.t. $D:=\Deg(x_0)=\D$. and suppose
  Let $x_0 \in V$.  Suppose $\lambda_{\deg_{\max}}=K.$
  %	\textcolor{blue}{
  Then, the following hold true.
  \begin{enumerate}
  \item [1)] $G$ satisfies $HSS(\frac{2D}K,\frac K 2,x_0)$ for
    arbitrary $x_0 \in V$.
    % \item $s_n := \# S_n(x_0) = {D \choose n} $
  \item [2)] $G$ has constant edge degree.
  \end{enumerate}
  % }
\end{theorem}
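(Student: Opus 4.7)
The plan is to combine Lemma~\ref{l:eigenvalue_sharp_CD} with the implication $(3)\Rightarrow(5)$ of Theorem~\ref{thm:weighted Cheng}, using the freedom to vary the base point in both steps. The only non-routine task is to verify the auxiliary hypothesis $\Deg(x_0) = D$ of Theorem~\ref{thm:weighted Cheng} at an arbitrary $x_0$, and I would handle this by showing that $\Deg$ is forced to be globally constant.

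For part~1), I would fix an arbitrary $x_0 \in V$ and apply Lemma~\ref{l:eigenvalue_sharp_CD} to obtain $\Gamma_2 f_0 = K \Gamma f_0$ for $f_0 := d(x_0,\cdot)$. Theorem~\ref{thm:abstract CD sharpness}(a) then tells me that $\Gamma f_0$ is constant on $V$. Evaluating at $x_0$ yields $2\Gamma f_0(x_0) = \Deg(x_0)$, whereas at any other $x$ one has $2\Gamma f_0(x) \leq \Deg(x)$, since each squared difference $(f_0(y)-f_0(x))^2$ with $y \sim x$ lies in $\{0,1\}$. Hence $\Deg(x_0) \leq \Deg(x)$ for every $x$; since $x_0$ was arbitrary, $\Deg$ is constant on $V$, and the constant must equal $D = \Deg_{\max}$. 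With $\Deg(x_0) = D$ established at every $x_0$, Theorem~\ref{thm:weighted Cheng}$((3)\Rightarrow(5))$ applied at each $x_0$ yields $HSS(\tfrac{2D}{K},\tfrac{K}{2},x_0)$.

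For part~2) I would use the base-point freedom just established, edge by edge. Fix an edge $\{x,y\}$. Applying $HSS(\tfrac{2D}{K},\tfrac{K}{2},x)$ to the neighbour $y \in S_1(x)$ gives $d_-^x(y) = K/2$, that is $w(x,y)/m(y) = K/2$, so $w(x,y) = \tfrac{K}{2} m(y)$. The symmetric application with the roles of $x$ and $y$ interchanged produces $w(x,y) = \tfrac{K}{2} m(x)$. Therefore $m(x) = m(y)$ on every edge, and connectedness of $G$ upgrades this to a global constant $m \equiv m_0$. The edge degree then takes the constant value $\kappa(x,y) = w(x,y)/m(x) = K/2$ on every edge, which is assertion~2).

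The main obstacle I anticipate is the upgrade from ``$\Gamma f_0$ is constant'' to ``$\Deg$ is constant'' in the first step; once this is in place, both conclusions fall out of Theorems~\ref{thm:weighted Cheng} and~\ref{thm:abstract CD sharpness} essentially mechanically. The subtle point is that the analytic identity $\Gamma f_0 \equiv \Deg(x_0)/2$ only produces the one-sided inequality $\Deg(x_0) \leq \Deg(x)$, and it is precisely the arbitrariness of $x_0$ that turns this into equality and sidesteps any case distinction on which vertex attains $\Deg_{\max}$.
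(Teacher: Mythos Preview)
Your proof is correct. Part~1) essentially mirrors the paper's argument with a minor reordering: the paper first applies Theorem~\ref{thm:weighted Cheng} at a vertex of maximal degree to obtain $HSS$ (which already contains $\Deg$-regularity), and then bootstraps to arbitrary $x_0$; you instead extract $\Deg$-regularity directly from the constancy of $\Gamma f_0$ before invoking Theorem~\ref{thm:weighted Cheng}. Both routes use the same ingredients.

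Your part~2), however, is genuinely different and more economical than the paper's. The paper first proves that $m$ is constant by a separate argument: it constructs, via Lemma~\ref{l: One-Ball eigenfunction surjective}, an eigenfunction $\varphi$ that is constant on $B_1(x)\setminus\{y\}$ for a hypothetical pair $x\sim y$ with $m(x)>m(y)$, and then derives a contradiction from the constancy of $\Gamma\varphi$ (Theorem~\ref{thm:abstract CD sharpness}(a)). Only after establishing $m\equiv m_0$ does it use $HSS$ at a single base point to conclude $w(x,y_i)=\tfrac{K}{2}m(x)$ for all neighbours $y_i$. Your observation that $HSS$ can be applied at \emph{both} endpoints of an edge, yielding simultaneously $w(x,y)=\tfrac{K}{2}m(y)$ and $w(x,y)=\tfrac{K}{2}m(x)$, bypasses the eigenfunction construction entirely and makes the constancy of $m$ a one-line consequence of part~1). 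This is a cleaner route and shows precisely how the full strength of part~1) (namely $HSS$ at every vertex, not just one) feeds into part~2).
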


\begin{proof}
		
  % To prove the hypercube shell structure, we have to show
  % \begin{enumerate}
  % \item [a)] $G$ is $D$-regular,
  % \item [b)] $G$ is bipartite,
  % \item [c)] $d_-^{x_0}(x_0) = \frac K 2 d(x_0,y)$ for all
  %   $x_0,y\in V$.
  % \end{enumerate}
  %		
  % For convenience, we write $D:=\D$.  \textcolor{red}{Let $x_0$
  % s.t. $\Deg(x_0)=D$.}  We start proving a) and b).
		
  We start proving 1). We observe that
  Lemma~\ref{l:eigenvalue_sharp_CD} yields $\Gamma_2 f_0 = K\Gamma f_0$
  with $f=d(x_0,\cdot)$.
  % On the other hand due to Theorem~\ref{thm:abstract CD sharpness},
  % we have
  % $$D \geq \Deg(y) \geq d_-^{x_0}(y) + d_+^{x_0}(y) =2\Gamma f(y) =
  % 2\Gamma f(x_0) = D$$
  % for all $y \in V$.
  % Hence, $G$ is $D$-regular and bipartite which proves a) and b).
  % We continue proving c).
  % \textcolor{red}{We observe that the assumption $\Deg(x_0) =D$
  % indeed is no restriction at all due to $D$-regularity.}
  % Combining $d_-^{x_0}(y) + d_+^{x_0}(y) = D$ and
  % $$d_+^{x_0}(y) - d_-^{x_0}(y) = \Delta f(y) = - K f(y) = D- K
  % d(x_0,y)$$
  % yields $d_-^{x_0}(x_0) = \frac K 2 d(x_0,y)$ which proves
  % c). %\textcolor{red}{If} $G$ has standard weights, we know $d_-^{x_0}(y)=1$ for $y \in S_1(x_0)$. But this implies $K=2$ and proves 3) and 4).
  % Putting together a), b) and c) yields 1).
  Therefore, assertion (3) of Theorem~\ref{thm:weighted Cheng} holds
  true when choosing $x_0$ s.t. $\Deg(x_0)$ is maximal.  Now we apply
  $(3)\Rightarrow(5)$ of Theorem~\ref{thm:weighted Cheng} and conclude
  that $G$ satisfies $HSS(\frac{2D}K,\frac K 2,x_0)$. The hypercube
  shell structure (Definition~\ref{def:hypercube shell structure})
  implies that $G$ has constant vertex degree and, therefore,
  assumption $(3)$ and property $(5)$ of Theorem~\ref{thm:weighted
    Cheng} holds true for choosing $x_0$ arbitrary. This finishes the
  proof of 1).

  Next, we prove 2). Recall from Lemma~\ref{l:uniform edge degree
    basic} that a connected graph $G=(V,w,m)$ has constant edge degree
  $\kappa_0$ iff there exist global $m,w>0$ s.t. $w(x,y) \in \{0,w\}$
  and $m(x)=m$ for all $x,y \in V$ and if $\kappa_0=w/m$.

  We first prove that $m$ is constant. Suppose this is not the case.
  Due to connectedness of $G$, there exist $x\sim y$
  s.t. $m(x)>m(y)$. Let $f:V \to \IR$ be a function s.t.  $f(z)=1$ for
  all $z \neq y$ and s.t. $\Delta f(x) = -K$, that is,
  $f(y) = 1 - Km(x)/w(x,y) \neq 1$. By Lemma~\ref{l:
    One-Ball eigenfunction surjective}, there exists an eigenfunction
  $\varphi$ to the eigenvalue $K$ s.t. $\varphi(z)=f(z)$ for
  $z \in B_1(x)$. Hence,
  \begin{align}
    0< \Gamma \varphi(x)= \frac 1 {2m(x)} w(x,y)(f(x)-f(y))^2.
    \label{eq:Gamma_phi_x_for_const_edge_degree}
  \end{align}			
  By Theorem~\ref{thm:abstract CD sharpness}(a), the gradient
  $\Gamma \varphi$ is constant and by assumption, one has $m(x)>m(y)$,
  and thus,
  \begin{align*}
    \Gamma\varphi(x)=\Gamma \varphi(y) \geq \frac 1 {2m(y)} w(x,y)(f(x)-f(y))^2  > \frac 1 {2m(x)} w(x,y)(f(x)-f(y))^2.
  \end{align*}
  This is a contradiction to
  (\ref{eq:Gamma_phi_x_for_const_edge_degree}) and hence $m$ is
  constant.

  Now suppose $G$ has no constant edge degree. By connectedness of $G$, this
  implies that there exists $x$ and $y_i\sim x$ for $i=1,2$ with
  $w(x,y_1)\neq w(x,y_2)$. We know from assertion $1)$ of the theorem
  that $HSS(\frac{2D}K,\frac K 2, x)$, and in particular, using property (3)
  of the hypercube shell structure (Definition \ref{def:hypercube shell structure})
  \begin{align*}
    w(x,y_i)= d_-^{x}(y_i)m(y_i)=\frac K 2 d(x,y_i)m(y_i)
    =\frac K 2 m(y_i) =\frac K 2 m(x)
  \end{align*}
  where the first and the third equality follow from $x\sim y_i$ for
  $i=1,2$.  Thus, $w(x,y_1)=w(x,y_2)$ which is a contradiction.  We
  conclude that $G$ has constant edge degree.
\end{proof}
	
%	We remark that this theorem immediately proves
% \ref{char:Lichnerowicz} $\Rightarrow$ \ref{char:eigenvalue
% sharpness} of the main theorem (Theorem~\ref{thm:main}).

	%	We remark that this theorem immediately proves \ref{char:Lichnerowicz} $\Rightarrow$ \ref{char:eigenvalue sharpness} of the main theorem (Theorem~\ref{thm:main}).

%\textcolor{blue}{
	\subsection{The necessity of a constant edge degree assumption}\label{sec:Examples}
	For the weighted case, one could hope that, whenever a weighted graph satisfies $CD(K,\infty)$ and $\diam(G)=\frac{2\D}{K}$, the graph has to be a hypercube. But that is not true in general.
	In this subsection, we give counter examples. To do so,
	we give a method to transfer spherically symmetric graphs into linear graphs, i.e., weighted graphs with the adjacency of $\IN$ (see \cite{keller2013volume}).
	This transfer preserves Bakry-\'Emery curvature and therefore, the  linear graph corresponding to the hypercube $H_D$ still satisfies $CD(2,\infty)$ and has diameter $D$.
	Using this method, we show that the main theorem fails without the assumption of constant edge degrees.
	We start giving examples with sharp diameter bounds
	According to \cite{keller2013volume}, we define weak spherical symmetry.
	\begin{defn}
		We call a graph $G=(V,w,m)$ \emph{weakly spherically symmetric} w.r.t. a root $x_0 \in V$ if for all $y,z$ with $d(y,x_0)=d(z,x_0)$ holds
		\begin{align}
		m(y)=m(z), \qquad d_-^{x_0}(y)=d_-^{x_0}(z), \qquad \mbox{ and }  d_+^{x_0}(y)=d_+^{x_0}(z).
		\end{align}
	\end{defn}	
	\begin{defn}
		Let $G=(V,w,m)$ be a graph. 
		%For $A,B\subset V$, we write $w(A,B):=\sum_{x \in A, y \in B} w(x,y)$ and $m(A) := \sum_{x\in A}m(x)$.
		Let $x_0 \in V$ and let $G_P^{x_0}= (V_G^{x_0}, w_G^{x_0},m_G^{x_0})$ be given by
		$V_G^{x_0} := \{0,\ldots,\sup_y d(x_0,y)\}$ and
		\begin{align}
		w_G^{x_0}(i,j) = \begin{cases}
		w(S_i(x_0),S_j(x_0))
		&:|i-j|=1 \\0&:  else.
		\end{cases}
		\end{align}
		and
		\begin{align}
		m_G^{x_0}(i):= m(S_i(x_0)).
		\end{align}
		We define $P_G^{x_0} : C(V_G^{x_0}) \to C(V)$ via $ \left( P_G^{x_0} f \right) (x) := f(d(x,x_0))$ for all $x \in V$.
	\end{defn}
	The following lemma is in the spirit of	\cite[Lemma~3.3]{keller2013volume}.
	\begin{lemma}\label{l:Laplace PG commute}
		Let $G=(V,w,m)$ be a weakly spherically symmetric graph. %Suppose there exists $x_0\in V$ s.t. $d_-^{x_0}, d_+^{x_0}$ and $m$ stay constant on the sphere $S_k(x_0)$ for all $k\geq 0$. 
		Then for all $f \in C(V_G^{x_0})$, we have $P_G^{x_0} \Delta f = \Delta P_G^{x_0} f$.
	\end{lemma}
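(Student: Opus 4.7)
The plan is to establish the identity pointwise by unfolding both sides and matching the coefficients of $f(i\pm 1)$. The starting observation is that $P_G^{x_0}f$ is radial by construction, so at a vertex $x$ with $i := d(x,x_0)$ every edge $x\sim y$ with $y \in S_i(x_0)$ contributes zero to $\Delta(P_G^{x_0}f)(x)$. Splitting the remaining terms according to whether $d(y,x_0) = i-1$ or $i+1$ gives
\begin{align*}
\Delta(P_G^{x_0}f)(x) = d_-^{x_0}(x)\bigl(f(i-1)-f(i)\bigr) + d_+^{x_0}(x)\bigl(f(i+1)-f(i)\bigr).
\end{align*}

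Next, unfolding the definitions of $w_G^{x_0}$ and $m_G^{x_0}$ yields, for every $j \in V_G^{x_0}$,
\begin{align*}
\Delta f(j) = \frac{w(S_j(x_0),S_{j-1}(x_0))}{m(S_j(x_0))}\bigl(f(j-1)-f(j)\bigr) + \frac{w(S_j(x_0),S_{j+1}(x_0))}{m(S_j(x_0))}\bigl(f(j+1)-f(j)\bigr),
\end{align*}
so $(P_G^{x_0}\Delta f)(x) = \Delta f(i)$ has exactly the same shape as the previous display. Thus the lemma reduces to verifying
\begin{align*}
d_-^{x_0}(x) = \frac{w(S_i(x_0),S_{i-1}(x_0))}{m(S_i(x_0))}
\end{align*}
for every $x \in S_i(x_0)$, together with the analogous equality for $d_+^{x_0}$.

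This is where weak spherical symmetry enters. Summing the defining identity $m(y)d_-^{x_0}(y) = w(\{y\},S_{i-1}(x_0))$ over $y \in S_i(x_0)$ yields $w(S_{i-1}(x_0),S_i(x_0)) = \sum_{y\in S_i(x_0)} m(y)d_-^{x_0}(y)$; by spherical symmetry every summand equals $m(x)d_-^{x_0}(x)$, and $m(S_i(x_0)) = \#S_i(x_0)\cdot m(x)$, so dividing gives the claimed equality. The argument for $d_+^{x_0}$ is identical, and the boundary cases $i=0$ and $i = \sup_y d(x_0,y)$ match up with the convention that terms with out-of-range indices vanish. I do not anticipate any substantive obstacle; the lemma is essentially a bookkeeping identity once one organises the computation by the radial coordinate and uses spherical symmetry to factor out per-vertex weights.
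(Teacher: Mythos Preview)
Your proposal is correct and follows essentially the same approach as the paper: both arguments unfold the two sides pointwise and reduce the identity to showing that $w(S_i(x_0),S_{i\pm 1}(x_0))/m(S_i(x_0)) = d_\pm^{x_0}(x)$ for $x \in S_i(x_0)$, which is an immediate consequence of the constancy of $m$ and $d_\pm^{x_0}$ on spheres. The only cosmetic difference is the order of the computation (you start from $\Delta(P_G^{x_0}f)$, the paper from $P_G^{x_0}\Delta f$).
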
	
	\begin{proof}
		Let $f \in C(V_G^{x_0})$, let $x \in V$ and let $n:=x_G^{x_0} := d(x,x_0) \in V_G^{x_0}$.
		Then since $d_-,d_+$ and $m$ are constant on $S_n(x_0)$, we have 
		\begin{align*}
		&\left(P_G^{x_0} \Delta f \right) (x)\\
		=&\Delta f(x_G^{x_0}) \\ =&\frac{  w(S_n(x_0),S_{n+1}(x_0))(f(n+1)-f(n)) + w(S_n(x_0),S_{n-1}(x_0))(f(n-1)-f(n))}{m(S_n(x_0))}\\
		=&\frac{\#S_n(x_0) d_+^{x_0}(x)(f(n+1)-f(n)) + \#S_n(x_0) d_-^{x_0}(x)(f(n-1)-f(n))}{\#S_n(x_0)} \\
		=&\frac 1 {m(x)}\sum_{y\sim x} w(x,y)\left[P_G^{x_0} f(y) - P_G^{x_0} f(x)\right]\\=&
		\Delta \left( P_G^{x_0} f \right) (x).
		\end{align*}  
		This finishes the proof.
	\end{proof}
	We now show that the map $G \mapsto G_P^{(x_0)}$ is curvature preserving if $G$ is weakly spherically symmetric w.r.t. $x_0$.
	\begin{corollary}\label{cor: spherical symmetric CD}
		Let $G=(V,w,m)$ be a weakly spherically symmetric graph. %Suppose there exists $x_0\in V$ s.t. $d_-^{x_0}, d_+^{x_0}$ and $m$ stay constant on the sphere $S_k(x_0)$ for all $k\geq 0$.
		Suppose $G$ satisfies $CD(K,d)$ for some $K,d$. Then,
		$G_P^{x_0}$ also satisfies $CD(K,d)$.
	\end{corollary}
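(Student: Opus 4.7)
The plan is to show that the map $P_G^{x_0}$ intertwines not only the Laplacians (as already established in Lemma~\ref{l:Laplace PG commute}) but also the $\Gamma$ and $\Gamma_2$ operators, so that the $CD(K,d)$ inequality on $G$ pulls back directly to a $CD(K,d)$ inequality on $G_P^{x_0}$.

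First I would observe that $P_G^{x_0}$ is a unital algebra homomorphism: since $P_G^{x_0}f(x)=f(d(x,x_0))$ is defined by precomposition, one has $P_G^{x_0}(fg)=(P_G^{x_0}f)(P_G^{x_0}g)$ and $P_G^{x_0}\mathbf{1}=\mathbf{1}$. Combining this with the commutation relation $P_G^{x_0}\Delta=\Delta P_G^{x_0}$ from Lemma~\ref{l:Laplace PG commute}, an immediate computation from the definition of $\Gamma$ in terms of $\Delta$ yields
\begin{equation*}
P_G^{x_0}\Gamma(f,g) \;=\; \Gamma(P_G^{x_0}f,\,P_G^{x_0}g)
\end{equation*}
for all $f,g\in C(V_G^{x_0})$. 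Iterating the argument once more on the defining identity $2\Gamma_2(f,g)=\Delta\Gamma(f,g)-\Gamma(f,\Delta g)-\Gamma(g,\Delta f)$ and using commutation of $P_G^{x_0}$ with both $\Delta$ and (by the previous step) with $\Gamma$, I get
\begin{equation*}
P_G^{x_0}\Gamma_2(f,g) \;=\; \Gamma_2(P_G^{x_0}f,\,P_G^{x_0}g).
\end{equation*}

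Now fix $f\in C(V_G^{x_0})$ and an integer $n\in V_G^{x_0}$. Choose any $x\in V$ with $d(x,x_0)=n$; such a vertex exists by the very definition of $V_G^{x_0}$. Applying $CD(K,d)$ for $G$ to the function $P_G^{x_0}f$ at the vertex $x$ gives
\begin{equation*}
\Gamma_2(P_G^{x_0}f)(x) \;\geq\; \frac{1}{d}\bigl(\Delta P_G^{x_0}f\bigr)^2(x) \,+\, K\,\Gamma(P_G^{x_0}f)(x).
\end{equation*}
By the two intertwining identities above together with Lemma~\ref{l:Laplace PG commute}, each of the three terms equals the value at $n$ of the corresponding quantity on $G_P^{x_0}$ applied to $f$, so the inequality reads
\begin{equation*}
\Gamma_2 f(n) \;\geq\; \frac{1}{d}\bigl(\Delta f\bigr)^2(n) \,+\, K\,\Gamma f(n),
\end{equation*}
which is precisely $CD(K,d)$ for $G_P^{x_0}$ at the vertex $n$. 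Since $f$ and $n$ were arbitrary, the corollary follows.

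The argument is essentially mechanical once the two intertwining identities are in place; the only point that requires a tiny bit of care is checking that each level $n\in V_G^{x_0}$ really is realised as $d(x,x_0)$ for some $x\in V$, but this is built into the definition of $V_G^{x_0}$. Thus I expect no substantive obstacle — the work has already been done in Lemma~\ref{l:Laplace PG commute}, and the remaining content is the observation that $P_G^{x_0}$ respects pointwise products, from which commutation with $\Gamma$ and $\Gamma_2$ follows formally.
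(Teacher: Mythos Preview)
Your proof is correct and follows essentially the same approach as the paper: both use that $P_G^{x_0}$ is multiplicative together with Lemma~\ref{l:Laplace PG commute} to deduce the intertwining identities $P_G^{x_0}\Gamma=\Gamma P_G^{x_0}$ and $P_G^{x_0}\Gamma_2=\Gamma_2 P_G^{x_0}$, and then transfer the $CD(K,d)$ inequality. The only cosmetic difference is that you pick a representative vertex $x$ at each level $n$, whereas the paper phrases the final step as ``$P_G^{x_0}g\geq 0$ iff $g\geq 0$''; these are equivalent.
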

	\begin{proof}
		Obviously for $f,g \in C(V_G^{x_0})$, we have $$P_G^{x_0}(fg) = \left(P_G^{x_0}f \right) \left(P_G^{x_0}g\right)$$
		Together with Lemma~\ref{l:Laplace PG commute}, we obtain
		\begin{align*}
		P_G^{x_0}(\Delta f)^2 &= (\Delta P_G^{x_0}f)^2,\\
		P_G^{x_0}\Gamma f &= \Gamma P_G^{x_0} f,\\
		P_G^{x_0}\Gamma_2 f &= \Gamma_2 P_G^{x_0} f.
		\end{align*}
		To abuse notation, we write $\Delta^2f:=(\Delta f)^2$.
		Since $G$ satisfies $CD(K,d)$, we have
		\begin{align}
		0 \leq \left(\Gamma_2 - K \Gamma - \frac 1 d \Delta^2\right)(P_G^{x_0} f) = P_G^{x_0}\left(\Gamma_2 - K \Gamma - \frac 1 d \Delta^2\right) f 
		\end{align}
		Since $P_G^{x_0} g$ is positive if and only if $g$ is positive, we obtain
		$$\left(\Gamma_2 - K \Gamma - \frac 1 d \Delta^2\right) f \geq 0$$
		which proves that $G_P^{x_0}$ satisfies $CD(K,d)$.
	\end{proof}
	The following lemma gives an explicit representation of ${(H_D)}_P^{x_0}$.
	\begin{lemma}\label{l:H_D spherically symmetric}
		The hypercube $H_D$ is weakly spherically symmetric w.r.t any $x_0 \in V$ and
		$(H_D)_P:={(H_D)}_P^{x_0} = (\{0,\ldots,D\},w_D,m_D)$ with
		\begin{align}
		w_D(k,k+1) =  {D \choose k} \cdot (D-k), \quad m_D(k) = {D \choose k}.
		\end{align}
	\end{lemma}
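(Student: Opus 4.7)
The plan is a direct verification using the standard combinatorial description of the hypercube and its symmetry.

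First, I would invoke the automorphism group of $H_D$ (which acts transitively on vertices, since for any $x_0\in\mathcal{P}([D])$ the map $A\mapsto A\ominus x_0$ is a graph automorphism sending $x_0$ to $\emptyset$) to reduce to the case $x_0=\emptyset$. Under this identification $d(x_0,A)=\#(A\ominus\emptyset)=\#A$, so $S_k(x_0)=\{A\subset [D]:\#A=k\}$ and therefore $\#S_k(x_0)=\binom{D}{k}$. Since $H_D$ is unweighted, $m\equiv 1$, giving $m(y)=m(z)$ trivially whenever $d(y,x_0)=d(z,x_0)$.

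Next, I would compute the backward- and forward-degrees on each sphere. For $y\in S_k(x_0)$ the neighbors of $y$ are precisely the sets $y\ominus\{i\}$ for $i\in[D]$. Removing an element $i\in y$ produces a vertex in $S_{k-1}(x_0)$, while adding an element $i\notin y$ produces a vertex in $S_{k+1}(x_0)$. Hence
\[
d_-^{x_0}(y)=\#y=k,\qquad d_+^{x_0}(y)=D-\#y=D-k,
\]
and both depend only on $k=d(y,x_0)$. This establishes the weak spherical symmetry of $H_D$ with respect to any $x_0$.

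Finally, I would compute the data of $(H_D)_P^{x_0}$. By definition $m_G^{x_0}(k)=m(S_k(x_0))=\#S_k(x_0)=\binom{D}{k}$, which is $m_D(k)$. For $|k-(k+1)|=1$, summing the forward-degrees of vertices in $S_k(x_0)$ yields
\[
w_G^{x_0}(k,k+1)=w(S_k(x_0),S_{k+1}(x_0))=\sum_{y\in S_k(x_0)}d_+^{x_0}(y)\cdot m(y)=\binom{D}{k}(D-k),
\]
matching $w_D(k,k+1)$. No nontrivial obstacle is anticipated; the only care needed is to justify that choosing $x_0=\emptyset$ loses no generality, which the vertex-transitivity of $H_D$ ensures.
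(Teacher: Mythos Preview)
Your proposal is correct and follows essentially the same approach as the paper: both compute $\#S_k(x_0)=\binom{D}{k}$, verify $d_-^{x_0}(y)=k$ and $d_+^{x_0}(y)=D-k$ for $y\in S_k(x_0)$, and then read off $m_D(k)$ and $w_D(k,k+1)$ directly. The only cosmetic difference is that you explicitly invoke vertex-transitivity to reduce to $x_0=\emptyset$, whereas the paper leaves this reduction implicit.
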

	\begin{proof}
		We write $H_D=(V,w,m)$.
		We have $m_D(k)=m(S_k(x_0)) = \# S_k(x_0)= {D \choose k}$ for $k=0,\ldots D$.
		Moreover, for every vertex $x \in S_k$ there are exactly $D-k$ edges between $x$ and $S_{k+1}$. Thus, $d_+(x) = D-k$ and 
		\begin{align}
		w_D(k) = w(S_k(x_0),S_{k+1}(x_0)) = \# S_k d_+(x) = {D \choose k} \cdot (D-k)
		\end{align}
		Moreover for $x \in S_k(x_0)$, we have $m(x)=1$ and $d_+(x)=D-k$ and $d_-(x)=k$ which proves weak spherical symmetry of $H_D$.
	\end{proof}
	Now, we can give examples of graphs with hypercube shell structures which are not hypercubes.
	\begin{corollary}\label{cor:HSS but no hypercube}
		The graph ${(H_D)}_P$ satisfies $CD(2,\infty)$ and $\D=D = \diam_d(x_0)$. 
		Moreover, ${(H_D)}_P^{x_0}$ has the hypercube shell structure $HSS(D,1)$.
	\end{corollary}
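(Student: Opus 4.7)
The plan is to combine the weak spherical symmetry of $H_D$ established in Lemma~\ref{l:H_D spherically symmetric} with the curvature-preserving projection of Corollary~\ref{cor: spherical symmetric CD}, and then directly verify each clause of the statement using the explicit formulas for $w_D$ and $m_D$.

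First, I would invoke the known fact (recalled in the proof of Theorem~\ref{thm:main}) that the unweighted hypercube $H_D$ satisfies $CD(2,\infty)$. Lemma~\ref{l:H_D spherically symmetric} tells us that $H_D$ is weakly spherically symmetric with respect to any $x_0$, so Corollary~\ref{cor: spherical symmetric CD} immediately transfers the curvature bound and yields $(H_D)_P \models CD(2,\infty)$.

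Next, I would compute the weighted degree on $(H_D)_P = (\{0,\ldots,D\}, w_D, m_D)$ using the formulas $w_D(k,k+1) = \binom{D}{k}(D-k)$ and $m_D(k) = \binom{D}{k}$ from Lemma~\ref{l:H_D spherically symmetric}. The identity $\binom{D}{k-1}(D-k+1) = k\binom{D}{k}$ gives
\begin{equation*}
\Deg(k) \;=\; \frac{w_D(k-1,k) + w_D(k,k+1)}{m_D(k)} \;=\; \frac{k\binom{D}{k} + (D-k)\binom{D}{k}}{\binom{D}{k}} \;=\; D,
\end{equation*}
so $\Deg_{\max} = D$. Since $(H_D)_P$ is the path on $\{0,\ldots,D\}$ with only nearest-neighbour edges, its combinatorial diameter is plainly $D$, and the hypothesis $d(x_0,y_0) = 2D/K$ of Theorem~\ref{thm:weighted Cheng} is met with $x_0 = 0$, $y_0 = D$, $K = 2$.

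Finally, to verify $HSS(D,1)$ with root $x_0 = 0$: regularity $\Deg \equiv D = D \cdot 1$ has just been shown; bipartiteness is obvious for a path; and for $k \geq 1$ the only neighbour of $k$ closer to $0$ is $k-1$, hence
\begin{equation*}
d_-^{0}(k) \;=\; \frac{w_D(k-1,k)}{m_D(k)} \;=\; \frac{k\binom{D}{k}}{\binom{D}{k}} \;=\; k \;=\; 1 \cdot d(k,0),
\end{equation*}
with the case $k=0$ trivial. All three clauses of Definition~\ref{def:hypercube shell structure} are satisfied with $N = D$ and $W = 1$. There is no genuine obstacle: every ingredient is already in place from the preceding lemmas, and the verification is reduced to one binomial identity applied twice.
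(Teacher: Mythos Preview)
Your proof is correct. The derivation of $CD(2,\infty)$ and of the diameter is exactly the paper's argument: Lemma~\ref{l:H_D spherically symmetric} plus Corollary~\ref{cor: spherical symmetric CD}, together with the known curvature of $H_D$, and the obvious diameter of the path.

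Where you diverge is in establishing $HSS(D,1)$. The paper does not compute $\Deg(k)$ or $d_-^{0}(k)$ at all; instead it observes that $(H_D)_P$ satisfies $CD(2,\infty)$ with $\Deg_{\max}=D$ and attains the extremal diameter $2D/K=D$, and then invokes the implication $(1)\Rightarrow(5)$ of Theorem~\ref{thm:weighted Cheng} to conclude $HSS(D,1)$ directly. Your route is a hands-on verification of the three clauses of Definition~\ref{def:hypercube shell structure} via the binomial identity $\binom{D}{k-1}(D-k+1)=k\binom{D}{k}$. This is more elementary and entirely self-contained --- it does not appeal to the semigroup machinery behind Theorem~\ref{thm:weighted Cheng} --- while the paper's version is terser but leans on that heavier result. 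Both are valid; your explicit degree computation also has the minor advantage of making $\Deg_{\max}=D$ visible, a fact the paper uses implicitly when it applies Theorem~\ref{thm:weighted Cheng}.
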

	\begin{proof}
		Combining Lemma~\ref{l:H_D spherically symmetric} and Corollary~\ref{cor: spherical symmetric CD} with the fact that $H_D$ satisfies $CD(2,\infty)$ yields that ${(H_D)}_P$ satisfies $CD(2,\infty)$. Obviously, ${(H_D)}_P$ has diameter $D$ since the hypercube $H_D$ has. Theorem~\ref{thm:weighted Cheng} yields that ${(H_D)}_P$ has the hypercube shell structure $HSS(D,1)$.
	\end{proof}
	The corollary implies that property (\ref{char:diameter bound}) in the main theorem (Theorem~\ref{thm:main}) is satisfied for ${(H_D)}_P^{x_0}$ except for the constant edge degree $\kappa$ (see Definition~\ref{def:uniform edge degree}), but ${(H_D)}_P^{x_0}$ is no hypercube for $D>1$.
	I.e., the discrete Cheng theorem (Theorem~\ref{thm:main}) fails if we drop the constant edge degree assumption.
Remark that 	${(H_D)}_P^{x_0}$ corresponds to the discrete Ornstein-Uhlenbeck process up to normalization.
%}

\begin{comment}
\begin{figure}[t]
\centering
\includegraphics[width=0.8\linewidth]{"weighted counterexample to Cheng"}
\caption{The figure states that the graph $G_0$ satisfies $CD(1,\infty)$. I.e., $G_0$ gives an example of a weighted graph satisfying sharpness of Bonnet-Myers and Lichnerowicz, but is not a hypercube.}
\label{fig:weightedcounterexampletoCheng}
\end{figure}
\end{comment}

\section{A combinatorial approach to Bakry-\'Emery curvature}\label{sec:combinatorial approach}
From Theorem~\ref{thm:Obata} and Theorem~\ref{thm:weighted Cheng}, we know about coarse structures of the graph. Unfortunately, our semigroup approach cannot distinguish between vertices within the same sphere due to spherical symmetry of $f_0 =d(x_0,\cdot)$. E.g., our semigroup methods cannot see if we replace two edges $(y_1,z_1)$ and $(y_2,z_2)$ by edges  $(y_1,z_2)$ and $(y_2,z_1)$ for $y_i \in S_k(x_0)$ and $z_i \in S_{k+1}(x_0)$ and $i=1,2$.
To have deeper insight into the edge structure between the spheres, we use combinatorial arguments derived from methods in \cite{cushing2016bakry}.

\subsection{Small sphere property and non-clustering property}

We recall the definition of (SSP) and (NCP).
Let $G=(V,E)$ be a $D$-regular graph and let $x \in V$.
\begin{enumerate}
	\item[(SSP)] We say $x$ satisfies the \emph{small sphere property} (SSP) if
	\begin{align*}
	\#S_2(x) \leq {D \choose 2}
	\end{align*}
	\item[(NCP)]   We say $x$ satisfies the \emph{non-clustering property} (NCP) if, whenever 	$d_-^x(z) = 2$ holds for all $z \in S_2(x)$, one has that
	for all $y_1,y_2\in S_1(x)$ there is at most one $z \in S_2(x)$ satisfying $y_1\sim z \sim y_2$. 
\end{enumerate}
We now show that both properties follow from $CD(2,\infty)$ as announced in Theorem~\ref{thm:local combinatorics CD}.

\begin{theorem}[Restatement of Theorem~\ref{thm:local combinatorics CD}]\label{thm:local_isomorphic}
	Let G=(V,E) be a $D$-regular bipartite graph satisfying $CD(2,\infty)$ at some point $x \in V$. Then,
	$x$ satisfies the small two-sphere property (SSP) and the non-clustering property (NCP).
\end{theorem}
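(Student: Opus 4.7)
The plan is to derive both (SSP) and (NCP) from the curvature inequality $\Gamma_2 f(x) \geq 2 \Gamma f(x)$ applied to two cleverly chosen test functions. For each choice, I would prescribe $f$ on $B_1(x)$ and extend to $S_2(x)$ via the $\Gamma_2$-minimizing averaging formula from Lemma~\ref{l:minimality}, namely
\[
f(z) \;=\; -f(x) + \frac{2}{d_-^x(z)}\sum_{y \in N(z)\cap S_1(x)} f(y), \qquad z \in S_2(x).
\]
This extension only decreases $\Gamma_2 f(x)$ and leaves $\Gamma f(x)$ untouched, so $CD(2,\infty)$ still must hold and becomes a purely combinatorial inequality about $S_1(x)$ and $S_2(x)$. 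I will always take $f(x) = 0$, and exploit both $D$-regularity and bipartiteness (so neighbours of any $y \in S_1(x)$ consist of $x$ and $D-1$ vertices in $S_2(x)$).

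For (SSP), I would fix $y \in S_1(x)$, set $f = \mathbf 1_{\{y\}}$ on $S_1(x)$, and extend so that $f(z) = 2/d_-^x(z)$ on $N(y)\cap S_2(x)$ and $f(z) = 0$ elsewhere on $S_2(x)$. A direct evaluation then gives $\Gamma f(x) = 1/2$, while the sum $\sum_{y' \in S_1(x)}\Gamma f(y')$ collapses to $D/2$ because of the algebraic identity $(d-2)^2 + 4(d-1) = d^2$, forcing $\Delta \Gamma f(x) = 0$. The cross-term $\Gamma(f,\Delta f)(x)$ unwinds to $\tfrac{-D-1}{2} + \sum_{z \in N(y)\cap S_2(x)} 1/d_-^x(z)$, so $2\Gamma_2 f(x) = D + 1 - 2\sum_{z \in N(y)\cap S_2(x)} 1/d_-^x(z)$, and $\Gamma_2 f(x) \geq 2\Gamma f(x)$ becomes
\[
\sum_{z \in N(y)\cap S_2(x)} \frac{1}{d_-^x(z)} \;\leq\; \frac{D-1}{2}.
\]
Summing over $y \in S_1(x)$ and swapping the order of summation (each $z \in S_2(x)$ appears $d_-^x(z)$ times with weight $1/d_-^x(z)$) gives $|S_2(x)| \leq D(D-1)/2 = \binom{D}{2}$, which is (SSP).

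For (NCP), I would assume $d_-^x(z) = 2$ for every $z \in S_2(x)$, fix $y_1, y_2 \in S_1(x)$, and set $f(y_1) = f(y_2) = 1$ with $f \equiv 0$ on $S_1(x)\setminus\{y_1,y_2\}$. Partitioning $S_2(x)$ into $A$ (adjacent to both $y_1, y_2$), $B_1, B_2$ (adjacent to exactly one of them), and $C$ (adjacent to neither), the averaging rule gives $f = 2$ on $A$, $f = 1$ on $B_1 \cup B_2$, and $f = 0$ on $C$. Using $|A| + |B_i| = D-1$ together with $d_-^x(z) = 2$ (which makes every $z \in B_1 \cup B_2$ have exactly one neighbour in $S_1(x)\setminus\{y_1, y_2\}$), the same type of bookkeeping yields $\Delta\Gamma f(x) = 0$ and $\Gamma(f,\Delta f)(x) = |A| - 3$, so that $2\Gamma_2 f(x) = 6 - 2|A|$. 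The requirement $\Gamma_2 f(x) \geq 2\Gamma f(x) = 2$ then forces $|A| \leq 1$, which is exactly (NCP).

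The main obstacle is the arithmetic leading to $\Delta\Gamma f(x) = 0$ in both cases: it hinges on the identity $(d-2)^2 + 4(d-1) = d^2$ and on a careful accounting of neighbourhood patterns between $S_1(x)$ and $S_2(x)$ (using bipartiteness and $D$-regularity, plus $d_-^x \equiv 2$ in the NCP case). Once these two identities fall into place, no further combinatorial input is needed: $CD(2,\infty)$ directly yields both (SSP) and (NCP) via the two explicit test functions above.
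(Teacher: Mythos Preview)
Your argument is correct, and it takes a genuinely different route from the paper's own proof. The paper does not compute $\Gamma_2$ directly at all: instead it invokes a structural result from \cite{cushing2016bakry} (stated as Theorem~\ref{thm:CLP2016}) that, for a triangle-free $D$-regular graph, $CD(2,\infty)$ at $x$ is equivalent to the eigenvalue bound $\lambda_1(\Delta_{S_1''(x)}) \geq D/2$ for an auxiliary weighted Laplacian on $S_1(x)$, together with the sphere bound $\#S_2(x)\leq (D-1)(D-\lambda_1)$; combining these gives (SSP). For (NCP), the paper observes that under $d_-^x\equiv 2$ the trace of $\Delta_{S_1''(x)}$ is $-\binom{D}{2}$, so a trace--eigenvalue inequality (Lemma~\ref{l:linear_algebra_rigidity}) forces $\lambda_1\leq D/2$; equality then pins down all off-diagonal entries of $\Delta_{S_1''(x)}$ to $1/2$, which is exactly (NCP).

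Your approach, by contrast, is entirely self-contained: two explicit test functions, the $\Gamma_2$-minimizing extension to $S_2(x)$, and direct bookkeeping. The identity $(d-2)^2+4(d-1)=d^2$ is precisely what makes $\Delta\Gamma f(x)=0$ in the (SSP) case, and the analogous cancellation in the (NCP) case follows from $|A|+|B_i|=D-1$ and $d_-^x\equiv 2$; I checked both and they go through as you describe. Your method avoids any appeal to \cite{cushing2016bakry} and makes the role of the curvature threshold $K=2$ completely explicit in the final inequalities $\sum_z 1/d_-^x(z)\leq (D-1)/2$ and $|A|\leq 1$. The paper's route, on the other hand, embeds the problem into a spectral framework that may be more amenable to perturbation or to other curvature thresholds.
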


\begin{rem}
	Let $x \in V$. Assume $d_-^x(z) = 2$ for all $z \in S_2(x)$.
	Assume further that $x$ satisfies (NCP) and
	 that there is no edge between any two vertices from $S_2(x)$. Then, we can conclude that $B_2(x)$ is isomorphic to the $2$-ball of any vertex in the $D$-dimensional hypercube. 
\end{rem}

For the proof of the theorem, we use \cite[Theorem 9.1 and Proposition 9.9]{cushing2016bakry}. For convenience, we recall those results in the current setting. Let $G=(V,E)$ be an unweighted $D$-regular graph without triangles and $x\in V$. Let $S_1''(x)$ be the graph with vertex set $\{y_i\sim x, i=1,2,\ldots, D\}$ and an edge between $y_i$ and $y_j$ if and only if there exists $z\in S_2(x)$ such that $y_i\sim z\sim y_j$. We assign the following edge weights $w''(y_i,y_j)$ on the edges of $S_1''(x)$:
$$w''(y_i,y_j)=\sum_{z\in S_2(x)}\frac{w(y_i,z)w(z,y_j)}{d^x_-(z)}.$$

Consider the following Laplacian 
$$\Delta_{S_1''(x)}f(y_i)=\sum_{j\in [D]}w''(y_i,y_j)(f(y_j)-f(y_i)).$$
We refer to their eigenvalues $\lambda$ as solutions of $\Delta_{S_1''(x)}f+\lambda f=0$ and list them with their multiplicity by
$$0=\lambda_0(\Delta_{S_1''(x)})\leq \lambda_1(\Delta_{S_1''(x)})\leq\cdots\leq \lambda_{D-1}(\Delta_{S_1''(x)}).$$
\begin{theorem}[\cite{cushing2016bakry}]\label{thm:CLP2016} Let $G=(V,E)$ be an unweighted $D$-regular graph without triangles, $D\geq 2$. Let $x\in V$ and $\Delta_{S_1''(x)}$ be the Laplacian defined as above. Then we have
	\begin{enumerate}
		\item [1)] The vertex $x$ satisfies $CD(2,\infty)$ if and only if $\lambda_1(\Delta_{S_1''(x)})\geq \frac{D}{2}$.
		\item [2)] $\# S_2(x)\leq (D-1)(D-\lambda_1(\Delta_{S_1''(x)}))$.
	\end{enumerate}
\end{theorem}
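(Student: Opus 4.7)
The plan is to encode both statements as matrix computations on the local neighbourhood $B_2(x)$ and to reduce them via Schur complements. For part~(1), I would view the $CD(2,\infty)$ inequality at $x$ as the positive semi-definiteness of the symmetric matrix $M:=\Gamma_2(x)-2\Gamma(x)$ as a quadratic form in the values $(f(y))_{y\in B_2(x)}$, organized with blocks indexed by $\{x\}$, $S_1(x)$, and $S_2(x)$. Using the column formulas for $\Gamma_2(x)$ recalled during the proof of Lemma~\ref{l:minimality}, together with the fact that $\Gamma(x)$ only involves $B_1(x)$, the $S_2(x)\times S_2(x)$ block of $M$ is diagonal with strictly positive entries $\tfrac{1}{4}d_-^x(z)$. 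The Schur complement of $M$ with respect to this block is therefore well-defined, and $M\succeq 0$ is equivalent to positive semi-definiteness of the resulting Schur complement $\widetilde M$ on $\{x\}\cup S_1(x)$.

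A direct computation reveals that the off-diagonal $(y_i,y_j)$-entry of the correction $M|_{B_1\times S_2}\,(M|_{S_2\times S_2})^{-1}\,M|_{S_2\times B_1}$ for distinct $y_i,y_j\in S_1(x)$ equals exactly $w''(y_i,y_j)=\sum_{z}w(y_i,z)w(z,y_j)/d_-^x(z)$, which is precisely the edge weight defining $\Delta_{S_1''(x)}$. Combining this with the explicit expressions for $(\Gamma_2-2\Gamma)(x)$ on $B_1(x)\times B_1(x)$ --- where triangle-freeness is crucial so that $\Gamma_2(x)$ has no direct coupling between distinct $y_i,y_j\in S_1(x)$ --- and performing a second Schur complement to eliminate the $x$-row and column (harmlessly, since $\Gamma_2$ and $\Gamma$ are invariant under adding constants), the surviving matrix on $S_1(x)$ becomes, up to a positive multiplicative constant, a scalar shift of $-\Delta_{S_1''(x)}$ whose positive semi-definiteness on the mean-zero subspace of $\mathbb{R}^{S_1(x)}$ is equivalent to $\lambda_1(\Delta_{S_1''(x)})\geq D/2$. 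The hardest step is the careful book-keeping so that the $w''$-weights and the $D/2$-shift line up correctly; once the explicit form of $(\Gamma_2)_{y_iy_j}$ in the triangle-free case is in place, the calculation is mechanical.

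For part~(2), the argument is an eigenvalue-trace identity. Computing the trace of $-\Delta_{S_1''(x)}$ by summing weighted degrees and then exchanging the order of summation over $z\in S_2(x)$, I would obtain
\begin{align*}
\sum_{k=0}^{D-1}\lambda_k(\Delta_{S_1''(x)})
&= \sum_{y_i\in S_1(x)}\sum_{y_j\neq y_i}w''(y_i,y_j)\\
&= \sum_{z\in S_2(x)}\frac{(d_-^x(z))^2-d_-^x(z)}{d_-^x(z)}
= \sum_{z\in S_2(x)}\bigl(d_-^x(z)-1\bigr).
\end{align*}
Since each $y_i\in S_1(x)$ has forward degree $d_+^x(y_i)=D-1$, the total number of $S_1$--$S_2$ edges equals $\sum_z d_-^x(z)=D(D-1)$, and together with $\lambda_0=0$ this gives $\sum_{k=1}^{D-1}\lambda_k(\Delta_{S_1''(x)})=D(D-1)-\#S_2(x)$. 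The elementary bound $\sum_{k=1}^{D-1}\lambda_k\geq(D-1)\lambda_1$ then rearranges into $\#S_2(x)\leq(D-1)(D-\lambda_1(\Delta_{S_1''(x)}))$. This part is a short trace computation once the defining formula for $w''$ is unpacked.
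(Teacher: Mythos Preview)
Your proposal is correct. Note, however, that the paper does not give its own proof of this theorem: it is quoted from \cite{cushing2016bakry}, and the only justification in the present paper is the remark that $D$-regularity and triangle-freeness force $S_1$-out regularity with $av_1^+(x)=D-1$, so that \cite[Theorem~9.1 and Proposition~9.9]{cushing2016bakry} specialize to the stated form (in particular $(3+D-av_1^+(x))/2=2$). Your Schur-complement argument for part~(1) and your trace computation for part~(2) are exactly the mechanisms behind those cited results, so you are reconstructing the proof from the source rather than offering an alternative. Both parts are sound as written; the only point worth watching in part~(1) is that after eliminating the $S_2(x)$-block and the $x$-coordinate, one must restrict to the hyperplane orthogonal to the constant vector on $S_1(x)$ (as you note), and it is on this subspace that the reduced form equals $\Delta_{S_1''(x)}+\tfrac{D}{2}\,\mathrm{Id}$ up to a positive scalar.
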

\begin{rem} Theorem \ref{thm:CLP2016} follows as a special cases of \cite[Theorem 9.1]{cushing2016bakry} and \cite[Proposition 9.9]{cushing2016bakry} . 
Note first that $D$-regularity and
triangle-freeness implies that every vertex $x$ of $G$ is $S_1$-out
regular (i.e., the out-degrees $d_+^y$ of all $y \sim x$ are the same
and agree with $av_1^+(x)$). In this case, it is stated in [4, Theorem
9.1] that the eigenvalue estimate is equivalent to $\infty$-curvature
sharpness and, via the explicit formula of the curvature function,
equivalent to $CD(2,\infty)$, since $(3+D-av_1^+(x))/2=2$.
%In fact, it is stated in \cite{cushing2016bakry} for any $S_1$-out regular vertex $x$ (i.e., the out-degree $d_+^{y}$ of any $y\sim x$ is the same) in a locally finite graph. In that general setting, the condition $CD(0,\infty)$ in $1)$ of Theorem \ref{thm:CLP2016} is replaced by $\infty$-curvature sharpness of $x$.
\end{rem}

We also need the following lemma.
\begin{lemma}\label{l:linear_algebra_rigidity}
	Let $X=(x_{ij})\in \textrm{Sym}(r,\mathbb{R})$ be an $r \times r$ symmetric real matrix with
	\begin{enumerate}
		\item [1)] $x_{ij}\geq 0$, for any $i\neq j \in [r]$.
		\item [2)] $x_{ii}=-\sum_{j\neq i}x_{ij}$.
	\end{enumerate}
	Assume that its eigenvalues (i.e., solutions of $Xf+\lambda f=0$) can be listed with their multiplicity as 
	$$0=\lambda_0\leq \lambda_1\leq \lambda_2\leq\cdots\leq \lambda_{r-1}.$$
	Then we have
	\begin{equation}
	\lambda_1\leq -\frac{\mathrm{Tr}(X)}{r-1},
	\end{equation}
	where the equality holds if and only if $x_{ij}=-\frac{\mathrm{Tr}(X)}{r(r-1)}$ for any $i\neq j$.
\end{lemma}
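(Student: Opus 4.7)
The plan is to treat $-X$ as a (generalized) weighted graph Laplacian and exploit the standard trace bound for Laplacian eigenvalues.

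First I would observe that conditions $1)$ and $2)$ imply that $-X$ is a symmetric matrix with non-positive off-diagonal entries and zero row sums. By the usual manipulation
\begin{equation*}
\langle f, (-X) f \rangle = \tfrac{1}{2}\sum_{i \neq j} x_{ij} (f_i - f_j)^2 \geq 0,
\end{equation*}
so $-X$ is positive semi-definite. Condition $2)$ moreover shows that the all-ones vector $\mathbf{1}$ lies in the kernel of $X$, so $\lambda_0 = 0$ is an eigenvalue and all other eigenvalues are non-negative, consistent with the given ordering.

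Next, I would compute the trace. Since the eigenvalues of $X$ are $-\lambda_0,-\lambda_1,\ldots,-\lambda_{r-1}$, we have
\begin{equation*}
\mathrm{Tr}(X) = -\sum_{k=0}^{r-1} \lambda_k = -\sum_{k=1}^{r-1} \lambda_k.
\end{equation*}
Using $\lambda_1 \leq \lambda_k$ for all $k \geq 1$, this yields
\begin{equation*}
(r-1)\lambda_1 \;\leq\; \sum_{k=1}^{r-1} \lambda_k \;=\; -\mathrm{Tr}(X),
\end{equation*}
which is the claimed inequality.

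For the rigidity statement, equality in the above bound occurs precisely when $\lambda_1 = \lambda_2 = \cdots = \lambda_{r-1} =: \lambda$. Then $-X$ has spectrum $\{0\}$ (simple, with eigenvector $\mathbf{1}$) and $\{\lambda\}$ (with multiplicity $r-1$ on $\mathbf{1}^{\perp}$), so by the spectral theorem
\begin{equation*}
-X = \lambda\left(I - \tfrac{1}{r}J\right),
\end{equation*}
where $J$ is the all-ones matrix. Reading off the entries gives $x_{ij} = \lambda/r$ for $i \neq j$, and combining with $\mathrm{Tr}(X) = -\lambda(r-1)$ produces $x_{ij} = -\mathrm{Tr}(X)/(r(r-1))$, as required. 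The converse direction (substituting this uniform off-diagonal value back into $X$ and verifying the spectrum) is a direct computation.

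There is essentially no hard step here: the argument is the standard "Laplacian trace bound," so the only thing to be careful about is the bookkeeping of signs, namely that eigenvalues are defined by $Xf + \lambda f = 0$ rather than by $Xf = \lambda f$, and that $\mathrm{Tr}(X) \leq 0$ so that the bound $-\mathrm{Tr}(X)/(r-1)$ is actually non-negative and compatible with $\lambda_1 \geq 0$.
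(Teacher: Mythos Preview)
Your proof is correct and follows essentially the same approach as the paper: both derive the inequality from the trace identity $-\mathrm{Tr}(X)=\sum_{k\ge 1}\lambda_k\ge (r-1)\lambda_1$, and both deduce rigidity from the fact that equality forces $\lambda_1=\cdots=\lambda_{r-1}$. The only cosmetic difference is that you read off the entries from the spectral decomposition $-X=\lambda\bigl(I-\tfrac{1}{r}J\bigr)$, whereas the paper applies the test vectors $f=e_k-e_\ell\in\mathbf{1}^\perp$ to conclude $x_{ik}=x_{i\ell}$ directly.
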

\begin{proof}
	W.l.o.g., we assume $-\mathrm{Tr}(X)>0$. Since $-\mathrm{Tr}(X)=\sum_{\ell=0}^{r-1}\lambda_\ell \geq (r-1)\lambda_1$, we have $\lambda_1\leq -\frac{\mathrm{Tr}(X)}{r-1}$. The equality implies that $\lambda_1=\lambda_2=\cdots=\lambda_{r-1}=-\frac{\mathrm{Tr}(X)}{r-1}$. Since the eigenspace to $\lambda_0=0$ is spanned by constant vectors, every $f\in \mathbb{R}^r$ orthogonal to constant vectors is an eigenvector of $X$ to the eigenvalue $-\frac{\mathrm{Tr}(X)}{r-1}>0$. It is sufficient to show for any three distinct $i,k,\ell\in [r]$ that $x_{ik}=x_{i\ell}$. Choose $f=e_k-e_\ell$ which is vertical to constant vectors. Then we have $(Xf)_i=x_{ik}-x_{i\ell}=-\frac{\mathrm{Tr}(X)}{r-1}f_i=0$.
\end{proof}
\begin{proof}[Proof of Theorem \ref{thm:local_isomorphic}] Since $x$ satisfies $CD(2,\infty)$, we obtain $\# S_2(x) \leq {D \choose 2}$ by combining $1)$ and $2)$ of Theorem \ref{thm:CLP2016}. I.e., $x$ satisfies (SSP).
	
	We now prove (NCP). Note that there are $D(D-1)$ edges between $S_1(x)$ and $S_2(x)$. Since $d_-^x(z)=2$ for any $z\in S_2(x)$, we conclude $\# S_2(x) = \frac{D(D-1)}{2}={D \choose 2}$. Observe that $\Delta_{S_1''(x)}=:X=(x_{ij})\in \mathrm{Sym}(D,\mathbb{R})$ with $x_{ij}\geq 0$ for all $i\neq j$ and $x_{ii}=-\sum_{j\neq i}x_{ij}$. Moreover, by the construction of $\Delta_{S_1''(x)}$, we have $$\sum_{i,j\in [D], i<j}x_{ij}=\frac{1}{2}{D \choose 2},$$
	since each edge in $S_1''(x)$ contributes a weight $1/2$ and $S_1''(x)$
has $D \choose 2$ edges in total. 
	Therefore, we have $-\mathrm{Tr}(X)=\sum_{i\in [D]}\sum_{j\neq i} x_{ij}={D \choose 2}$. Applying Lemma \ref{l:linear_algebra_rigidity}, we obtain $\lambda_1(\Delta_{S_1''(x)})\leq \frac{D}{2}$. Furthermore, we have by $1)$ of Theorem \ref{thm:CLP2016} that $\lambda_1(\Delta_{S_1''(x)})\geq \frac{D}{2}$. Hence the equality holds and we have $x_{ij}=\frac{1}{2}$ for any $i\neq j$ by Lemma \ref{l:linear_algebra_rigidity}. That is, for any two vertices $y_i,y_j\in S_1(x)$, there is exact one $z\in S_2(x)$ satisfying $y_i\sim z\sim y_j$. This proves (NCP).
\end{proof}

By Theorem~\ref{thm:local_isomorphic}, we directly obtain   \ref{char:CD} $\Rightarrow$ \ref{char:SSP NCP} from the main theorem (Theorem~\ref{thm:main}).

\subsection{The subtleties}

In the following, we demonstrate that already little changes in (NCP) have the consequence that our method no longer works.

\begin{example}
One might be tempted to replace (NCP) by the stronger (NCP2) stating that whenever $\#S_2(x) = {D \choose 2}$, we obtain that for all $y_1,y_2$ there is at most one $z \in S_2(x)$ s.t. $y_1 \sim z \sim y_2$.
But unfortunately, $CD(2,\infty)$ does not imply (NCP2) as one can see in Figure~\ref{fig:nostrongerNCP} and in the following Lemma~\ref{l:figure no stronger NCP}. 
This demonstrates the subtleties of finding a suitable interface between Bakry-\'Emery-curvature and a combinatorial characterization of the hypercube.
\end{example}
\begin{lemma}\label{l:figure no stronger NCP}
	The unweighted graph given in Figure~\ref{fig:nostrongerNCP} satisfies $CD(2,0)$ at point $x$.
\end{lemma}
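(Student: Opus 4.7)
The plan is to apply Theorem~\ref{thm:CLP2016}(1), which for an unweighted $D$-regular triangle-free graph reduces the curvature-dimension inequality at $x$ to the spectral lower bound $\lambda_1(\Delta_{S_1''(x)}) \geq D/2$ on the auxiliary weighted Laplacian supported on the neighbours of $x$.

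First I would read off from Figure~\ref{fig:nostrongerNCP} the degree $D$ at $x$, enumerate $S_1(x) = \{y_1,\ldots,y_D\}$ and $S_2(x)$, and record for each $z \in S_2(x)$ the backward degree $d_-^x(z)$ together with the set of neighbours of $z$ in $S_1(x)$. From this combinatorial data I assemble the symmetric $D \times D$ matrix $\Delta_{S_1''(x)}$ whose off-diagonal entries are
\[
w''(y_i,y_j) = \sum_{z \in S_2(x)} \frac{w(y_i,z)\,w(z,y_j)}{d^x_-(z)}, \qquad i \ne j,
\]
with diagonal entries chosen so that the rows sum to zero.

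Next I would compute the spectrum of $\Delta_{S_1''(x)}$ and verify $\lambda_1 \geq D/2$. Since the figure is designed to violate (NCP2), the off-diagonal entries cannot all be equal, so by Lemma~\ref{l:linear_algebra_rigidity} the upper bound $\lambda_1 \leq -\operatorname{Tr}(\Delta_{S_1''(x)})/(D-1)$ is strict; consequently the figure must provide slack in the trace identity by arranging $\#S_2(x) < \binom{D}{2}$ or $d_-^x(z) > 2$ for some $z$, so that $-\operatorname{Tr}(\Delta_{S_1''(x)})/(D-1) > D/2$. I expect Figure~\ref{fig:nostrongerNCP} to possess enough symmetry (a non-trivial group of automorphisms of $B_2(x)$ fixing $x$ and acting in orbits on $S_1(x)$) that $\Delta_{S_1''(x)}$ block-diagonalises, reducing the eigenvalue computation to a few small diagonalisations.

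The principal obstacle is precisely this spectral verification: the failure of (NCP2) pushes some off-diagonal weights above $1/2$ and others below, spreading the eigenvalues around $D/2$, and the delicate point is that the smallest positive eigenvalue still meets the threshold $D/2$. Once this small arithmetic check is carried out, part~(1) of Theorem~\ref{thm:CLP2016} delivers the desired curvature condition at $x$.
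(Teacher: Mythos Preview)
Your overall approach---compute the weights $w''$ and verify $\lambda_1(\Delta_{S_1''(x)})\ge D/2$ via Theorem~\ref{thm:CLP2016}---is exactly what the paper does. But your explicit predictions about the structure of $\Delta_{S_1''(x)}$ are wrong, and this leads you to anticipate a delicate spectral check where in fact none is needed.

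In the figure one has $D=4$, and $S_2(x)=\{z_1,\dots,z_6\}$ with $z_3$ and $z_4$ each adjacent to \emph{all} of $y_1,\dots,y_4$ (so $d_-^x(z_3)=d_-^x(z_4)=4$), while $z_1,z_2,z_5,z_6$ are each adjacent to a single $y_i$. Thus for every pair $i\ne j$ the only common neighbours of $y_i,y_j$ in $S_2(x)$ are $z_3$ and $z_4$, giving
\[
w''(y_i,y_j)=\frac{1}{d_-^x(z_3)}+\frac{1}{d_-^x(z_4)}=\tfrac14+\tfrac14=\tfrac12.
\]
All off-diagonal entries are therefore equal to $\tfrac12$, contrary to your claim. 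The matrix $\Delta_{S_1''(x)}$ is simply the Laplacian of $K_4$ with edge weight $\tfrac12$, whose nonzero eigenvalues all equal $2=D/2$. Equality in Lemma~\ref{l:linear_algebra_rigidity} is attained; there is no slack in the trace and no spreading of eigenvalues around $D/2$.

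Your inference ``violation of (NCP2) forces unequal off-diagonal weights'' fails because (NCP2) counts common neighbours without reference to their backward degrees, whereas $w''$ divides by $d_-^x(z)$. Two common neighbours of backward degree $4$ contribute exactly the same total weight as a single common neighbour of backward degree $2$. This is precisely the point of the example: (NCP2) is invisible to the auxiliary Laplacian $\Delta_{S_1''(x)}$, hence cannot be deduced from $CD(2,\infty)$.
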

\begin{proof}
	Since the vertex $x$ is $S_1$-out regular, that is, each vertex in $S_1(x)$ has the same out-degree, we can apply \cite[Theorem 9.1]{cushing2016bakry}. Observe in this example we have $S_1''(x)$ is the complete graph with $4$ vertices, and $w''(y_i,y_j)=\frac{1}{2}$ for any $y_i,y_j \in S_1(x)$. Therefore, we have $\lambda_1(\Delta_{S_1''(x)})=2=\frac{\mathrm{Deg}(x)}{2}$. By \cite[Theorem 9.1]{cushing2016bakry}, we conclude $x$ satisfies $CD(2,\infty)$.
\end{proof}

%\begin{center}
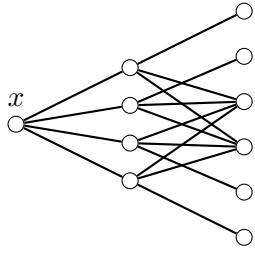
\begin{figure}[h]
	\centering
\begin{tikzpicture}[scale=0.25]
\vertex[label=$x$](x0) at (-6,0) {};
\vertex(y1) at (0,3) {};
\vertex(y2) at (0,1) {};
\vertex(y3) at (0,-1) {};
\vertex(y4) at (0,-3) {};
\vertex(z1) at (6,6) {};
\vertex(z2) at (6,3.6) {};
\vertex(z3) at (6,1.2) {};
\vertex(z4) at (6,-1.2) {};
\vertex(z5) at (6,-3.6) {};
\vertex(z6) at (6,-6) {};
%\tikzset{EdgeStyle/.style={->}}
\Edge(x0)(y1)
\Edge(x0)(y2)
\Edge(x0)(y3)
\Edge(x0)(y4)
\Edge(z3)(y1)
\Edge(z3)(y2)
\Edge(z3)(y3)
\Edge(z3)(y4)
\Edge(z4)(y1)
\Edge(z4)(y2)
\Edge(z4)(y3)
\Edge(z4)(y4)
\Edge(z1)(y1)
\Edge(z2)(y2)
\Edge(z5)(y3)
\Edge(z6)(y4)				
\end{tikzpicture}
\caption{Lemma~\ref{l:figure no stronger NCP} proves that this unweighted graph satisfies $CD(2,\infty)$. Moreover, the graph is bipartite and $B_1(x)$ is $D$-regular with $D=4$. Obviously, $\# S_2(x) = 6 = {D \choose 2}$. I.e., $x$ satisfies all preconditions of (NCP2). But $x$ does not satisfy (NCP2).}
\label{fig:nostrongerNCP}
\end{figure}
%\end{center}

\section{A combinatorial characterization of the hypercube}
\label{sec: combinatorial Char}
The aim of this section is to prove Theorem~\ref{thm:SSL NCP hypercube INTRO} which states that the hypercube shell structure together $HSS(D,1)$ with the small sphere property (SSP) and the non-clustering property (NCP) imply that the graph is a hypercube.
To prove the theorem, we need some preparation.
\subsection{A power set lemma} 

The following lemma will give that every two-sphere $S_2(z)$ around $z \in S_{k+1}(x_0)$ contains at least ${k+1 \choose 2}$ vertices in $S_{k-1}(x_0)$ if we assume that $B_k{(x_0)}$ is isomorphic to a corresponding ball in a hypercube, see (\ref{eq:degRz}).

For sets $X$ and $k \in \IN$, we write
$\iP_k(X) := \{A \subset X: \# A = k\}$ and
$\iP_{\leq k}(X) := \{A \subset X: \# A \leq k\}$.

\begin{lemma}[Power set properties]\label{l:power set}
	 Let $k,D \in \IN$ with $k<D$. Let $A_1,\ldots A_{k+1}$ be pairwise distinct $k$-element subsets of $[D]=\{1,\ldots,D\}$.
	
Then,
\begin{align}\label{eq:Lemma union PkAi}
\# \bigcup_{i=1}^{k+1} \iP_{k-1}(A_i)      \geq {k+1 \choose 2}.
\end{align}
Moreover, equality implies $\# \bigcup_{i=1}^{k+1} A_i = k+1$.	
\end{lemma}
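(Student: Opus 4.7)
The inequality is an immediate application of Bonferroni's inequality to the sets $\iP_{k-1}(A_i)$, and the equality case follows from a short analysis of the resulting combinatorial rigidity.

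First I would note two elementary facts: $\#\iP_{k-1}(A_i)=k$, and for $i\neq j$
$$\iP_{k-1}(A_i)\cap\iP_{k-1}(A_j)=\iP_{k-1}(A_i\cap A_j)$$
has at most one element, since $A_i\neq A_j$ of common size $k$ forces $\#(A_i\cap A_j)\leq k-1$; when nonempty, this intersection is $\{A_i\cap A_j\}$, which happens precisely when $\#(A_i\cap A_j)=k-1$. Bonferroni's inequality $\#\bigcup_i B_i\geq \sum_i\#B_i-\sum_{i<j}\#(B_i\cap B_j)$ then yields
$$\#\bigcup_{i=1}^{k+1}\iP_{k-1}(A_i)\;\geq\; k(k+1)-\binom{k+1}{2}\;=\;\binom{k+1}{2},$$
which is \eqref{eq:Lemma union PkAi}.

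For the equality case, I would use the well-known identity
$$\#\bigcup_i B_i - \Big(\sum_i\#B_i-\sum_{i<j}\#(B_i\cap B_j)\Big)=\sum_F\binom{m(F)-1}{2},$$
where $m(F)$ denotes the number of $A_i$'s containing the $(k-1)$-set $F$. Equality in the lemma therefore forces both (a) $\#(A_i\cap A_j)=k-1$ for every $i\neq j$, and (b) no $(k-1)$-set lies in three or more of the $A_i$'s. Condition (a) says the $A_i$'s form a clique of size $k+1$ in the Johnson graph $J(D,k)$, and (b) rules out the \emph{star} clique in which all the $A_i$ share a common $(k-1)$-subset. For $k\geq 2$ this leaves only the \emph{anti-star} case, in which the $A_i$ are the $k+1$ distinct $k$-subsets of a common $(k+1)$-set $U$, and hence $\#\bigcup_i A_i=\#U=k+1$; the case $k=1$ is trivial, since $\iP_0(A_i)=\{\emptyset\}$ and the $A_i$'s are already two distinct singletons.

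\textbf{Expected obstacle.} The mildly delicate step is exhibiting the anti-star structure cleanly, without appealing to an external classification of Johnson-graph cliques. My preferred route is direct: set $F:=A_1\cap A_2$ and write $A_1=F\cup\{a_1\}$, $A_2=F\cup\{a_2\}$. For any $\ell\geq 3$, condition (a) lets us write $A_\ell=(A_1\setminus\{c\})\cup\{d\}$ with $c\in A_1$ and $d\notin A_1$; condition (b) applied to the $(k-1)$-set $A_1\setminus\{c\}$ forces $c\in F$ (otherwise $c=a_1$ and $F=A_1\setminus\{c\}\subset A_1,A_2,A_\ell$, contradicting (b)); and applying (a) to the pair $(A_\ell,A_2)$ then forces $d=a_2$. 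Thus every $A_\ell\subset F\cup\{a_1,a_2\}$, so $\bigcup_i A_i=F\cup\{a_1,a_2\}$ has cardinality $k+1$, as required.
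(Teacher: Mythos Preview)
Your proof is correct and takes a somewhat different route from the paper's. The paper establishes the inequality incrementally, showing that adding $A_{j+1}$ contributes at least $k-j$ new $(k-1)$-subsets to $\bigcup_{i\le j}\iP_{k-1}(A_i)$, and then sums these increments; you instead apply the second Bonferroni bound globally in one step. Both arguments rest on the same elementary fact that $\#\big(\iP_{k-1}(A_i)\cap\iP_{k-1}(A_j)\big)\le 1$, but your packaging is more concise.

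For the equality case, the paper tracks sharpness through the incremental steps (using $j=1$ to fix $A:=A_1\cup A_2$ with $\#A=k+1$, then $j=2$ to force $A_3\subset A$, then reordering), whereas you extract the two rigidity conditions (a) and (b) directly from the Bonferroni error identity $\sum_F\binom{m(F)-1}{2}=0$ together with $\sum_{i<j}\#(B_i\cap B_j)=\binom{k+1}{2}$. Your direct anti-star argument is clean and self-contained; it is worth noting explicitly that you are using two separate equalities (Bonferroni sharp \emph{and} each pairwise intersection of size exactly~1), since (b) alone does not give (a). The paper's incremental approach has the minor advantage that the equality analysis is built into the same recursion, while yours has the advantage of making the combinatorial constraints (a) and (b) transparent from the outset.
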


\begin{proof}
We first observe that $\#A_i \cap A_j \leq k-1$ and $\#A_i\cup A_j \geq k+1$ for $i\neq j$.	
We prove for all $j=0,\ldots,k$ that
\begin{align}\label{eq:recursive Union PkAj}
\# \bigcup_{i=1}^{j+1} \iP_{k-1}(A_i) \geq k-j + \# \bigcup_{i=1}^{j} \iP_{k-1}(A_i).
\end{align}

To do so, we calculate
\begin{align}
\# \bigcup_{i=1}^{j+1} \iP_{k-1}(A_i) 
&= \# \bigcup_{i=1}^{j} \iP_{k-1}(A_i) + \#\iP_{k-1}(A_{j+1}) -
\#  \left(  \iP_{k-1}(A_{j+1}) \cap \bigcup_{i=1}^{j} \iP_{k-1}(A_i)   \right)
\nonumber\\&\geq \# \bigcup_{i=1}^{j} \iP_{k-1}(A_i) +k
- \sum_{i=1}^j\#   \left(  \iP_{k-1}(A_{j+1}) \cap  \iP_{k-1}(A_i) \right)
\nonumber\\&\geq k-j + \# \bigcup_{i=1}^{j} \iP_{k-1}(A_i) \label{eq:calc union Pk k-j}
\end{align}
where the last inequality holds due to
$$\#  \left[ \iP_{k-1}(A_i) \cap \iP_{k-1}(A_j)   \right] \leq 1, \quad i\neq j$$
which holds since $B \in \iP_{k-1}(A_i) \cap \iP_{k-1}(A_j)$ implies $B \subset A_i \cap A_j$ and implies $\#B=k-1\geq \#A_i \cap A_j$ and hence, $B = A_i \cap A_j$.

The last calculation implies 
\begin{align}\label{eq:AiAj}
\iP_{k-1}(A_i) \cap \iP_{k-1}(A_j) = \begin{cases}
\iP_{k-1}(A_i)&: i=j\\
A_i \cap A_j&: \# A_i \cup A_j = k+1\\
\emptyset &: else
\end{cases}
\end{align}

Applying (\ref{eq:recursive Union PkAj}) recursively yields

\begin{align*}
\# \bigcup_{i=1}^{k+1} \iP_{k-1}(A_i)     & \geq
\# \bigcup_{i=1}^{k} \iP_{k-1}(A_i)   \\&\geq
 1+ \#\bigcup_{i=1}^{k-1} \iP_{k-1}(A_i) \\& \geq
 2+1 + \#\bigcup_{i=1}^{k-2} \iP_{k-1}(A_i) \\& \vdots \\&\geq
 \sum_{j=0}^{k-1} k-j = {k+1 \choose 2}.
\end{align*}
This proves (\ref{eq:Lemma union PkAi}) and that sharpness implies sharpness of (\ref{eq:recursive Union PkAj}) and (\ref{eq:calc union Pk k-j}) for all $j$. 
We now prove $\# \bigcup_{i=1}^{k+1} A_i = k+1$ in case of sharpness of (\ref{eq:Lemma union PkAi}).
The case $k=1$ is trivially true and we assume $k \ge 2$.
Let $A:= A_1 \cup A_2$. Due to sharpness of (\ref{eq:recursive Union PkAj}) for $j=1$, we have $\#\bigcup_{i=1}^2 \iP_{k-1}(A_i) = 2k-1$ which implies $\#A = k+1$ due to (\ref{eq:AiAj}).

Due to sharpness of (\ref{eq:recursive Union PkAj}) and (\ref{eq:calc union Pk k-j}) for $j=2$,
we have $$\# \left(   \left[ \iP_{k-1}(A_1) \cup \iP_{k-1}(A_2) \right] \cap \iP_{k-1}(A_3) \right) =2$$ which due to (\ref{eq:AiAj}) implies $\# A_1 \cap A_3 = \#A_2 \cap A_3 = k-1$ and $A_1 \cap A_3 \neq A_2 \cap A_3$. Thus, 
$$\#A \cap A_3 = \# \left[  (A_1 \cap A_3) \cup (A_2 \cap A_3) \right]  \geq k = \#A_3$$
 which implies $A_3 \subset A$. Reordering $A_i$ yields $A_i \subset A$ for all $i$. Hence, $\#\bigcup_{i=1}^{k+1} A_i = \# A = k+1$ as desired.
\end{proof}

\subsection{A shell-wise construction of the hypercube}

We recall the symmetric set difference $A \ominus B = (A \cup B) \setminus (A \cap B) $.

Now, we have all ingredients to give a detailed proof of 
Theorem~\ref{thm:SSL NCP hypercube INTRO}.
To do so, we present an even stronger result.
%\ref{char:SSP NCP} $\Rightarrow$ \ref{char:hypercube} of the main theorem (Theorem~\ref{thm:main}).

\begin{theorem}\label{thm:local combinatorial characterization}
	Let $G=(V,E)$ be a $D$-regular bipartite graph and let $k \in \IN$. Suppose there is $x_0 \in V$ s.t. $d_-^{x_0}(y) = d(x_0,y)$ for all $y \in B_k(x_0)$. Suppose the small sphere property (SSP) and the non-clustering property (NCP) (see Definition~\ref{def: SSP NCP}) are satisfied for all $x \in B_{k-2}(x_0)$. Then, $B_k(x_0)$ is isomorphic to the $k$-ball in the $D$-dimensional hypercube.
\end{theorem}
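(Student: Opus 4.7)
The plan is to proceed by strong induction on $k$. The cases $k \leq 1$ are immediate from $D$-regularity. For $k = 2$, SSP at $x_0$ together with the edge count $2\# S_2(x_0) = D(D-1)$ (coming from $d_-^{x_0}(z) = 2$ and $d_+^{x_0}(y) = D-1$) forces $\# S_2(x_0) = \binom{D}{2}$; the condition $d_-^{x_0}(z) = 2$ for every $z \in S_2(x_0)$ then triggers the NCP hypothesis at $x_0$, yielding a bijection $S_2(x_0) \to \iP_2([D])$ sending $z$ to the pair of indices of its two back-neighbors. For $k \geq 3$ I assume inductively that $\phi \colon B_{k-1}(x_0) \to \iP_{\leq k-1}([D])$ is a graph isomorphism; this is permissible because the SSP/NCP assumption on $B_{k-2}(x_0)$ forces SSP/NCP on $B_{k-3}(x_0)$ as well.

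I extend $\phi$ by declaring $\phi(z) := \bigcup_{y \in Y(z)} \phi(y)$ for $z \in S_k(x_0)$, where $Y(z)$ denotes the set of $k$ back-neighbors of $z$, and write $A_i := \phi(y_i)$. $D$-regularity combined with $d_-^{x_0}(z) = k$ gives $\# S_k(x_0) = \binom{D}{k}$ by counting edges between $S_{k-1}(x_0)$ and $S_k(x_0)$. Setting $V(z) := S_{k-2}(x_0) \cap S_2(z)$, the induction hypothesis identifies $V(z)$ with $\bigcup_i \iP_{k-2}(A_i)$, so Lemma~\ref{l:power set} (applied to the $k$ distinct $(k-1)$-subsets $A_i$) yields $\# V(z) \geq \binom{k}{2}$, with equality iff $\#\phi(z) = k$. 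Conversely, for each $v \in S_{k-2}(x_0)$, SSP gives $\# S_2(v) \leq \binom{D}{2}$; since $S_2(v) \cap B_{k-1}(x_0)$ already contains the hypercube count $\binom{k-2}{2} + (k-2)(D-k+2)$ by induction, subtraction yields $\#(S_2(v) \cap S_k(x_0)) \leq \binom{D-k+2}{2}$.

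The decisive step is a tight double count: the identity $\sum_{z \in S_k} \# V(z) = \sum_{v \in S_{k-2}} \#(S_2(v) \cap S_k)$, combined with the binomial identity $\binom{D}{k-2}\binom{D-k+2}{2} = \binom{D}{k}\binom{k}{2}$, forces both of the above inequalities to be equalities. In particular $\# V(z) = \binom{k}{2}$ and $\#\phi(z) = k$ for every $z \in S_k(x_0)$, so $\phi$ takes values in $\iP_k([D])$. The equality case of Lemma~\ref{l:power set} further identifies $V(z)$ with $\phi^{-1}(\iP_{k-2}(\phi(z)))$, whence each $v \in V(z)$ lies inside exactly two of the $(k-1)$-subsets of $\phi(z)$. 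This implies $d_-^v(z) = 2$ for all $z \in S_2(v) \cap S_k(x_0)$, and together with the analogous identity $d_-^v(w) = 2$ for $w \in S_2(v) \cap B_{k-1}(x_0)$ coming from the inductive hypercube structure, the NCP hypothesis is verified at every $v \in S_{k-2}(x_0)$.

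Finally I use NCP to obtain injectivity of $\phi$ on $S_k(x_0)$: if $\phi(z_1) = \phi(z_2) = A$, then $Y(z_1) = Y(z_2) = \phi^{-1}(\iP_{k-1}(A))$, so any two $y, y' \in Y(z_1)$ share the common back-neighbor $v := \phi^{-1}(\phi(y) \cap \phi(y')) \in S_{k-2}(x_0)$, and both $z_1, z_2 \in S_2(v)$ sit above both $y$ and $y'$; NCP at $v$ forces $z_1 = z_2$. Since $\# S_k(x_0) = \binom{D}{k} = \# \iP_k([D])$, $\phi|_{S_k}$ is bijective, and the equivalence $y \sim z \Leftrightarrow \phi(y) \subset \phi(z)$ follows from the defining formula for $Y(z)$ and the inductive adjacency structure of $B_{k-1}(x_0)$. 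The main obstacle is the simultaneous use of SSP at every vertex of $S_{k-2}(x_0)$ in concert with the power set lemma; everything hinges on the binomial identity that makes the double count tight, after which NCP can be invoked in its rigid form to pin down injectivity.
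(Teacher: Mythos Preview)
Your proof is correct and follows essentially the same approach as the paper: induction over the radius, with the induction step driven by the tight double count between $\#V(z)\geq\binom{k}{2}$ (from Lemma~\ref{l:power set}) and $\#(S_2(v)\cap S_k(x_0))\leq\binom{D-k+2}{2}$ (from SSP plus the inductive hypercube count), followed by invoking the equality case of the lemma to define the extension and NCP at vertices of $S_{k-2}(x_0)$ to obtain injectivity. The only differences from the paper are cosmetic---you treat $k=2$ separately and index the induction step as $k-1\to k$ rather than $k\to k+1$---and your verification that $d_-^v(z)=2$ for all $z\in S_2(v)$ is organized slightly differently, but the substance is the same.
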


\begin{proof}%[Proof of the theorem]
%$G$ is $D$-regular and bipartite with $D=\D = \diam_d(G)$ and $K=2$ due to assumption.	
	
%We will define an isomorphism $\Phi: V \to \iP([D])$ from the graph $G$ to the hypercube $\iP([D])$ where for $A, B \in \iP([D])$, we have $A \sim B$ if and only if $\# (A \ominus B)=1$.
In the following arguments, we use Definition~\ref{def:hypercube} of the hypercube.
By assumption for $x \in S_j(x_0)$, we have $d_-^{x_0}(x) = j$ and due to bipartiteness and $D$-regularity, $d_+^{x_0}(x)=D-j$ follows immediately.
Hence with using the notation $E(A,B):=\{\{x,y\}\in E: x\in A, y\in B\}$ for $A,B \subseteq V$, we obtain
\begin{align*}
(D-j)\#S_j(x_0) &=\sum_{y\in S_j(x_0)} d_+^{x_0}(y) \\&=\#E(S_j(x_0),S_{j+1}(x_0)) \\&= \sum_{y\in S_{j+1}(x_0)} d_-^{x_0}(y) \\&= (j+1)\#S_{j+1}(x_0). 
\end{align*}
Applying inductively yields
 \begin{align}\label{eq:k sphere size}
\#S_j(x_0) = {D \choose j} 
\end{align}
for all $j\leq k$, assuming $d_-^{x_0}(y) = d(x,y)$ for all $y \in B_k(x_0)$.

Now we prove that we have an isomorphism $\Phi_{\leq k}: B_k(x_0) \cong {\mathcal{P}}_{\le k}([D])$ consistent with adjacency by induction over $k$, which then completes the proof of the theorem. Since $G$ is a $D$-regular graph without triangles, we have an isomorphism $\Phi_{\leq 1}: B_1(x_0) \cong {\mathcal{P}}_{\le 1}([D])$, given by $\Phi_{\le 1}(x_0) = \emptyset$
and $\Phi_{\le 1}{(y_j)} = \{j\}$ for $S_1(x) = \{y_1,\dots,y_D\}$.
This settles the case $k=1$ of the induction.

%Since $G$ is a $D$-regular graph without triangles, we have $B_1(x_0)  \cong \iP_{\leq 1}([D])$. % with $A \sim B$ if and only if $\# (A \ominus B)=1$.	
%We now prove by induction that $B_k(x_0)  \cong \iP_{\leq k}([D])$ for all $k$ which would immediately imply the theorem.

By induction, we assume $B_k(x_0)  \cong \iP_{\leq k}([D])$ via an isomorphism $\Phi_{\leq k} : B_k(x_0)  \to \iP_{\leq k}([D])$ for some $k \geq 1$. We want to show $B_{k+1}(x_0)  \cong \iP_{\leq k+1}([D])$, assuming (SSP) and (NCP) for all $x \in B_{k-1}(x_0)$ and $d_-^{x_0}(y) = d(x_0,y)$ for all $y \in B_{k+1}(x_0)$.

We recall $f_0(x)=d(x,x_0)$ and we define a bipartite graph $(S_{k-1}(x_0) \cup S_{k+1}(x_0), R)$ via 
$(x,y) \in R$ if $f_0(x) \neq f_0(y)$ and if $d(x,y)=2$. 
We write $\deg_R(x) := \#\{y:(x,y) \in R\}$.
The disjoints parts are $S_{k-1}(x_0)$ and $S_{k+1}(x_0)$.

We now show that (SSP) and Lemma~\ref{l:power set} give sharp bounds on $\deg_R$.

For $x \in S_{k-1}(x_0)$, we have by induction assumption, that is, existence of an isomorphism
$\Phi_{\le k}: B_k(x_0) \to {\mathcal{P}}_{\le k}([D])$, that
$$\#  \left[ S_2(x) \cap B_k(x_0) \right] = {D \choose 2} - {D - k + 1 \choose 2}$$ as in the hypercube.
(This identity follows from the fact that, for a given
subset $A \subset [D]$ of cardinality $k-1$, there are precisely
$D-k+1 \choose 2$ subsets $A' \subset [D]$ of cardinality $k+1$ containing $A$).
By (SSP), we have for $x \in S_{k-1}(x_0)$ that
$\# S_2(x) \leq {D \choose 2}$ and thus, 
\begin{align}\label{eq:degRx}
\deg_R(x) =\#  \left[ S_2(x) \cap S_{k+1}(x_0)  \right] \leq {D - k + 1 \choose 2}, \quad \mbox{all } x \in S_{k-1}(x_0).
\end{align}

On the other hand, for all $z \in S_{k+1}(x_0)$, we have by assumption that $d_-^{x_0}(z) = k+1$, say $z \sim y_i$ for $i = 1,\ldots,k+1$ with $y_i \in S_k(x_0)$ pairwise distinct. Due to induction assumption, $y_i$ can be identified with pairwise distinct $A_i := \Phi_{\leq k}(y_i) \in \iP_k([D])$. 
For $x\in B_k(x_0)$, we have $\Phi_{\leq k}(x) \in \bigcup_{i=1}^{k+1}\iP_{k-1}(A_i)$ if and only if $x \in \#S_2(z) \cap S_{k-1}(x_0)$.
Applying Lemma~\ref{l:power set} yields 
\begin{align}\label{eq:degRz}
\deg_R(z)=\#  \left[ S_2(z) \cap S_{k-1}(x_0)  \right]  = \# \bigcup_{i=1}^{k+1} \iP_{k-1}(A_i)  \geq  {k+1 \choose 2}, \quad \mbox{all } z \in S_{k+1}(x_0).
\end{align}

Due to (\ref{eq:k sphere size}), we have $\#S_k(x_0)= {D \choose k}$ and together with (\ref{eq:degRx}) and (\ref{eq:degRz}), we obtain
\begin{align}
\frac {D!}{2(k-1)!(D-k-1)!}
&= {D \choose k+1} {k+1 \choose 2}\label{eq:sharp degR}\\
&= \# S_{k+1}(x_0) {k+1 \choose 2}\nonumber\\
&\stackrel{(\ref{eq:degRz})}{\leq} \sum_{z\in S_{k+1}(x_0)} \deg_R(z)\nonumber\\ &= \#R(S_{k-1}(x),S_{k+1}(x))\nonumber\\
& = \sum_{x\in S_{k-1}(x_0)} \deg_R(x)\nonumber\\
&\stackrel{(\ref{eq:degRx})}{\leq} \# S_{k-1}(x_0)  {D - k + 1 \choose 2} \nonumber\\
&={D \choose k-1}{D-k+1 \choose 2}\nonumber\\
%=\frac{D! }{(k-1)!(D-k+1)!} \cdot \frac {(D-k+1)!}{2(D-k-1)!}\\
&=\frac {D!}{2(k-1)!(D-k-1)!}\nonumber
\end{align}

Thus, we have sharpness and this implies $\deg_R(z) = {k+1 \choose 2}$ for all $z \in S_{k+1}(x_0)$.
By sharpness of (\ref{eq:degRz}) and Lemma~\ref{l:power set}, we have $\# \bigcup_{i=1}^{k+1} A_i = k+1$. % where $A_i$ corresponds to the neighbors $y_i$ of $z$ in $S_k(x_0)$.
We define $\Phi_{k+1}:S_{k+1}(x_0) \to \iP_{k+1}([D])$, $$z \mapsto \bigcup_{i=1}^{k+1} A_i.$$ %\in \iP_{k+1}([D])$$.
Thus, the sets $A_i$ are exactly the $k$-element subsets of $\Phi_{k+1}(z)$.
I.e., for $z \in S_{k+1}(x_0)$ and $y \in S_k{(x_0)}$, we have
\begin{align}\label{eq:Phi k,k+1}
y \sim z \quad \Longleftrightarrow \quad \Phi_{\leq k}(y) \sim \Phi_{k+1}(z).
\end{align}

We define $\Phi_{\leq k+1} : B_{k+1}(x_0) \to \iP_{\leq k+1}([D])$ via
$$
x \to \begin{cases}
\Phi_{k+1}(x) &: x \in S_{k+1}(x_0) \\
\Phi_{\leq k}(x) &: x \in B_{k}(x_0). 
\end{cases}
$$
By (\ref{eq:Phi k,k+1}), we have $x \sim y \Longleftrightarrow \Phi_{\leq k+1}(x) \sim \Phi_{\leq k+1}(y)$.

It remains to show that $\Phi_{\leq k+1}$ is bijective.
To do so, it suffices to prove that $\Phi_{k+1}$ is injective since $\#S_{k+1} = \# \iP_{k+1}([D])$ and since $\Phi_{\leq k}$ is bijective and since the domains and images of $\Phi_{\leq k}$ and $\Phi_{k+1}$ are disjoint.

The idea to prove injectivity is to show that for every $x \in S_{k-1}(x_0)$, we have that every $z \in S_{2}(x)$ in the two-sphere of $x$ has exactly two backwards-neighbors w.r.t. $x$.
Then we apply the non-clustering property (NCP). From this, we will obtain injectivity of $\Phi_{k+1}$. We now give the details.

Suppose $x \in S_{k-1}(x_0)$ and $z \in S_{k+1}(x_0)$ with $d(x,z)=2$. Let $X= \Phi_{\leq k+1}(x)$ and $Z= \Phi_{\leq k+1}(z)$.
Then,
$X\subset Z$ and $\#X = k-1$ and $\#Z=k+1$. Thus, 
$\#\{Y:X\sim Y \sim Z\} = 2$, and since $\Phi_{\leq k}$ is an isomorphism, and since $\Phi_{\leq k}^{-1}(Y) \sim z$ if and only if $Y \sim Z$, we infer  $\#\{y:x\sim y \sim z\} = 2$.
I.e., for all $x \in S_{k-1}(x_0)$ and for all $z \in S_2(x) \cap  S_{k+1}(x_0)$, we have $d_-^{x}(z) = 2$. By bijectivity of $\Phi_{\leq k}$, we have for every $z \in S_2(x) \cap  B_{k}(x_0)$ that $d_-^{x}(z) = 2$. Putting these together yields
$d_-^{x}(z) = 2$ for all $z \in S_2(x)$.
We now apply (NCP) and obtain that for all $y_1,y_2 \in S_1(x)$ there is at most one $z\in S_2(x)$ with $y_1\sim z \sim y_2$.

%$B_2(x) \cong \iP_{\leq 2}([D])$.

Suppose $\Phi_{k+1}(z_1) = \Phi_{k+1}(z_2) = Z$.
Let $X \subset Z$ with $\#X = k-1$.
Then, there exist $Y_1, Y_2 \in S_1(X)$ s.t. $Z \sim Y_i$, for $i=1,2$. Let $x = \Phi_{\leq k}^{-1}(X)$ and $y_i = \Phi_{\leq k}^{-1}(Y_i)$ for $i=1,2$. Thus,
$y_i \in S_1(x)$ and $z_i \in S_2(x)$ and $y_i \sim z_j$ for $i,j=1,2$. By (NCP), we infer $z_1 = z_2$. This proves injectivity of $\Phi_{k+1}$ and hence,
$\Phi_{\leq k+1}$ is an isomorphism, completing the induction step. This finishes the proof.
%By induction, there exists a graph isomorphism $\Phi_{\leq D}: V \to \iP([D])$ which finishes the proof since $\iP([D])$ is isomorphic to a $D$-dimensional hypercube.
\end{proof}

Taking $k=D$ in the above theorem and employing the definition of the hypercube shell structure (see Definition~\ref{def:hypercube shell structure}) yields the following corollary which is the reappearance of Theorem~\ref{thm:SSL NCP hypercube INTRO}. 

\begin{corollary}\label{cor:SSL NCP hypercube}
	Let $G=(V,E)$ be a  graph with the hypercube shell structure $HSS(D,1)$. Suppose, $G$ satisfies (SSP) and (NCP). Then, $G$ is isomorphic to the $D$-dimensional hypercube.
\end{corollary}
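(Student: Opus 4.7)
The plan is to reduce the corollary directly to Theorem~\ref{thm:local combinatorial characterization} with the choice $k=D$. Two short preparatory observations are needed: first, that the hypothesis $HSS(D,1)$ already supplies the three structural ingredients required by that theorem, namely bipartiteness, $D$-regularity, and the backwards-degree identity $d_-^{x_0}(y)=d(x_0,y)$ for all $y\in V$; second, that $B_D(x_0)=V$, so that the conclusion about $B_D(x_0)$ becomes a statement about the entire graph.

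For the first point, Definition~\ref{def:hypercube shell structure} with parameters $N=D$ and $W=1$ gives bipartiteness directly, constant vertex degree $\Deg(y)=NW=D$ (and since $W=1$ this coincides with the combinatorial degree, so $G$ is unweighted $D$-regular), and the identity $d_-^{x_0}(y)=W\cdot d(x_0,y)=d(x_0,y)$. The assumed properties (SSP) and (NCP) hold at every vertex of $G$, in particular on $B_{D-2}(x_0)$.

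For the second point, suppose for contradiction that some $y\in V$ satisfied $d(x_0,y)>D$. Then $d_-^{x_0}(y)=d(x_0,y)>D$, but on the other hand $d_-^{x_0}(y)\le \Deg(y)=D$, a contradiction. Hence $\diam_d(G)\le D$ and $B_D(x_0)=V$.

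With these observations in place, Theorem~\ref{thm:local combinatorial characterization} applied with $k=D$ yields an adjacency-preserving bijection $\Phi_{\le D}\colon B_D(x_0)\to \mathcal{P}_{\le D}([D])=\mathcal{P}([D])$, which by Definition~\ref{def:hypercube} is precisely an isomorphism of $G$ with the $D$-dimensional hypercube $H_D$. Since all the real combinatorial work is carried out inside the induction of Theorem~\ref{thm:local combinatorial characterization} (via the power-set Lemma~\ref{l:power set} and the rigidity extracted from (SSP) and (NCP)), the corollary is essentially a bookkeeping step; the only substantive thing to check is the diameter bound $V=B_D(x_0)$, which is where one could conceivably slip, and that check is the short degree-counting argument above.
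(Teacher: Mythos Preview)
Your proof is correct and follows exactly the paper's approach: the paper derives the corollary in one line by taking $k=D$ in Theorem~\ref{thm:local combinatorial characterization} and invoking Definition~\ref{def:hypercube shell structure}. Your write-up is in fact slightly more careful than the paper's, since you make explicit the diameter check $B_D(x_0)=V$ (via $d_-^{x_0}(y)=d(x_0,y)\le \Deg(y)=D$), which the paper leaves to the reader.
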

%\begin{proof}
%	This follows immediately from Theorem~\ref{thm:local combinatorial characterization} by taking $k=D$.
%\end{proof}
%We remark that this corollary proves \ref{char:SSP NCP} $\Rightarrow$ \ref{char:hypercube} from the main theorem (Theorem~\ref{thm:main}).

%\bibliography{Bib_Liu_Muench_Peyerimhoff}

%\bibliography{Bib_Liu_Muench_Peyerimhoff}{}
%\bibliographystyle{plain}

{\bf Acknowledgements:} We gratefully acknowledge partial support by the
EPSRC Grant EP/K016687/1. FM wants to thank the German Research Foundation (DFG) and the German Academic Scholarship Foundation for financial support and the Harvard University Center of Mathematical Sciences and Applications for their hospitality.

\printbibliography
Shiping Liu, 
School of Mathematical Sciences, University of Science and Technology of China, Hefei 230026, Anhui Province, China, e-mail: 
\texttt{spliu@ustc.edu.cn}

Florentin M\"unch,
Department of Mathematics, University of Potsdam, 14476 Potsdam, Germany, e-mail:  
\texttt{chmuench@uni-potsdam.de}

Norbert Peyerimhoff, 
Department of Mathematical Sciences, Durham University, Durham DH1, 3LE, United Kingdom, e-mail:  
\texttt{norbert.peyerimhoff@durham.ac.uk}

\end{document}